\documentclass[12pt,a4paper]{amsart}
\usepackage[english]{babel}
\usepackage[T1]{fontenc}
\usepackage{amsmath,amssymb}
\usepackage{calc}
\usepackage{float}
\usepackage{xcolor}
\usepackage{tikz}
\usepackage[colorlinks=true,linkcolor=blue,anchorcolor=blue,citecolor=blue,pagebackref,linktocpage]{hyperref}

\theoremstyle{plain}
\newtheorem{theorem}{Theorem}[section]
\newtheorem{proposition}[theorem]{Proposition}
\newtheorem{lemma}[theorem]{Lemma}
\newtheorem{corollary}[theorem]{Corollary}
\newtheorem{problem}[theorem]{Problem}
\theoremstyle{remark}
\newtheorem{remark}[theorem]{Remark}

\newcommand{\arxiv}[1]{\href{https://arxiv.org/abs/#1}{{\tt arXiv:#1}}}
\newcommand{\Aut}{\operatorname{Aut}}
\newcommand{\NE}{\overline{\operatorname{NE}}}
\newcommand{\Nef}{\operatorname{Nef}}
\newcommand{\NS}{\operatorname{NS}}

\begin{document}

\title[Mori dream surfaces of Kodaira dimension one]{Mori dream Jacobian elliptic surfaces of Kodaira dimension one}
\author{Antonio Laface}
\address{Departamento de Matem\'atica, Universidad de Concepci\'on, Casilla 160-C, Concepci\'on, Chile}
\email{\href{mailto:alaface@udec.cl}{alaface@udec.cl}}

\author{Sichen Li}
\address{
School of Mathematics, East China University of Science and Technology, Shanghai 200237, P. R. China}
\email{\href{mailto:sichenli@ecust.edu.cn}{sichenli@ecust.edu.cn}}
\author{Jihao Liu}
\address{
Department of Mathematics, Peking University, No. 5 Yiheyuan Road, Haidian District, Beijing 100871, China
\endgraf
Beijing International Center for Mathematical Research, Peking University, No. 5 Yiheyuan Road, Haidian District, Beijing 100871, China}
\email{\href{mailto:liujihao@math.pku.edu.cn}{liujihao@math.pku.edu.cn}}
\begin{abstract}
Let $\pi\colon X\to\mathbb P^1$ be a Jacobian elliptic surface over
$\mathbb C$, and set $\chi=\chi(\mathcal O_X)\ge3$, so that
$\kappa(X)=1$. Assume that the Mordell--Weil group of $\pi$ is finite and that $\pi$
has at least one reducible fiber, the reducible fibers being of types
$I_{n_1},\ldots,I_{n_s}$. We prove that the zero section and the components
of the reducible fibers generate the closed Mori cone if and only if
\[
 \sum_{i=1}^s\frac{\lfloor n_i^2/4\rfloor}{n_i}\le\chi.
\]
If $\sum_i n_i\le2\chi+3$, then $X$ is a Mori dream surface. The proof
combines an explicit description of the facets of the cone generated by the
curves visible in the fibration with Artin's criterion applied to the null
loci of the dual nef rays. We also show that, in Kodaira dimension one,
finiteness of both the Mordell--Weil group and the automorphism group does not
imply polyhedrality of the Mori cone. In the polyhedral range, we construct a
Jacobian elliptic surface with $(\chi,n)=(3,11)$, Picard number $12$, and a
big and nef divisor which is not semiample; in particular, this surface is not
a Mori dream surface. Finally, for every integer $\rho\ge2$, we construct a
Jacobian elliptic surface of Kodaira dimension one and Picard number $\rho$
that is a Mori dream surface.
\end{abstract}
\keywords{Mori dream spaces, Jacobian elliptic surfaces, Kodaira dimension one, Mori cone, fibration cone, nef cone, semiample cone}
\subjclass[2020]{14C20, 14E30, 14J27}

\maketitle

\section{Introduction}

Mori dream spaces were introduced by Hu and Keel \cite{HK00} as varieties on
which the main steps of the minimal model program can be organized into
finitely many birational models. Equivalently, under the standard hypotheses,
their Cox rings are finitely generated. For a smooth projective surface $X$
with $q(X)=0$, the criterion takes a particularly concrete form: $X$ is a
Mori dream surface if and only if its effective cone is rational polyhedral
and every nef divisor is semiample; see \cite[Corollary~2.6]{AHL10} and
\cite[Section~5.1]{ADHL15}.

The classification of Mori dream surfaces is far from complete. Several
important classes are nevertheless understood. Smooth rational surfaces with
big anticanonical class are Mori dream surfaces \cite{TVV11}; for smooth
rational surfaces with $\kappa(X,-K_X)=1$, finite generation of the Cox ring is
equivalent to rational polyhedrality of the effective cone \cite{AL11}.
Recent work has also refined the description of Cox-ring generators for
rational surfaces with nef anticanonical class \cite{AG25}. Projective K3
surfaces are Mori dream precisely when their automorphism groups are finite
\cite{AHL10,Sterk85}. In Kodaira dimension two, Keum and Lee constructed
minimal Mori dream surfaces of general type with $p_g=0$ and
$2\le K_X^2\le9$ \cite{KL19}.

Elliptic surfaces of Kodaira dimension one occupy the gap between these
settings. For a rational Jacobian elliptic surface, finite generation of the
Cox ring is equivalent to finiteness of the Mordell--Weil group, and Cox rings
of the extremal cases have been described explicitly
\cite{AL11,AGL16}. No analogous general criterion appears to be available for
properly elliptic surfaces. To the best of our knowledge, no Mori dream
surface of Kodaira dimension one had previously been exhibited. The existence
problem was raised in \cite[Section~7.2]{KL19} and explicitly posed for
elliptic surfaces over $\mathbb P^1$ in \cite[Question~5.15]{HL26}.

We work with a Jacobian elliptic surface
\[
 \pi\colon X\longrightarrow\mathbb P^1
\]
over $\mathbb C$ and write $\chi=\chi(\mathcal O_X)$. When $\chi\ge3$, one
has $q(X)=0$, $K_X\sim(\chi-2)F$, and $\kappa(X)=1$, where $F$ is a general
fiber. Every section has self-intersection $-\chi$. Consequently, rational
polyhedrality of $\NE(X)$ forces the Mordell--Weil group to be finite. One of
the points of this paper is that the converse fails, even if the automorphism
group is finite; see Theorem~\ref{Not-polyhedral-thm}. Thus two distinct issues
must be addressed: finite generation of the Mori cone and semiampleness of its
extremal nef classes.

Our first result concerns fibrations with possibly several reducible fibers.
Assume that these fibers are $F_1,\ldots,F_s$, of types
$I_{n_1},\ldots,I_{n_s}$, and let $\mathcal T$ be the set of their irreducible
components. If $C_0$ denotes the zero section, we consider the
\emph{fibration cone}
\[
 \mathcal C_\pi:=\mathbb R_{\ge0}[C_0]
 +\sum_{T\in\mathcal T}\mathbb R_{\ge0}[T].
\]
We also set
\[
 \delta(\pi):=\sum_{i=1}^s
 \frac{\lfloor n_i^2/4\rfloor}{n_i},
 \qquad
 N(\pi):=\sum_{i=1}^s n_i.
\]
The proof uses only that every reducible fiber is of type $I_n$; in
particular, the result applies to every semistable Jacobian elliptic surface.

\begin{theorem}
\label{Jac-thm}
Let $\pi\colon X\to\mathbb P^1$ be a Jacobian elliptic surface with
$\chi:=\chi(\mathcal O_X)\ge3$ and finite Mordell--Weil group. Suppose that
$\pi$ has at least one reducible fiber and that every reducible fiber is of
type $I_n$. Then the following statements hold.
\begin{enumerate}
\item The equality
\[
 \mathcal C_\pi=\NE(X)
\]
holds if and only if $\delta(\pi)\le\chi$. In particular, the Mori cone is
rational polyhedral in this range.
\item If $N(\pi)\le2\chi+3$, then $X$ is a Mori dream surface.
\end{enumerate}
\end{theorem}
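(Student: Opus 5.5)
Since the Mordell--Weil group is finite, the Shioda--Tate formula gives $\rho(X)=2+\sum_i(n_i-1)$, so $\NS(X)_{\mathbb R}$ has basis $C_0$, $F$, and the non-identity components $\Theta_{i,j}$ ($1\le j\le n_i-1$) of the fibers $F_i$. The proof of (1) rests on a duality argument. Because $\mathcal C_\pi\subseteq\NE(X)$ and $\mathcal C_\pi$ is full-dimensional and polyhedral, equality $\mathcal C_\pi=\NE(X)$ holds if and only if every ray of the dual cone $\mathcal C_\pi^\vee$ is nef, that is, has nonnegative degree on every irreducible curve. Every irreducible curve outside $\mathcal T\cup\{C_0\}$ is either a multisection or a component of an irreducible fiber. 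The latter class is $F$, which already lies in $\mathcal C_\pi$. So the whole problem reduces to an explicit description of the extremal rays of $\mathcal C_\pi^\vee$, followed by testing them against multisections.

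\emph{Step 1: compute $\mathcal C_\pi^\vee$.} A class $D=aC_0+bF-\sum_{i,j}c_{ij}\Theta_{i,j}$ lies in $\mathcal C_\pi^\vee$ if and only if $D\cdot C_0\ge0$ and $D\cdot T\ge0$ for all $T\in\mathcal T$. On each $I_{n_i}$ cycle, the conditions $D\cdot\Theta_{i,j}\ge0$ say that $j\mapsto c_{ij}$ (extended by $c_{i0}=c_{in_i}=0$) is concave, bounded by the value $a=D\cdot F$ forced at $\Theta_{i,0}$. The extremal concave profiles on the cycle are the tent functions with a single kink at $k$, $c_{ij}=a\min(j(n_i-k),k(n_i-j))/n_i$, plus the degenerate one. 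The value at the apex is $a\,k(n_i-k)/n_i$, whose maximum over $k$ is $a\lfloor n_i^2/4\rfloor/n_i$. The condition $D\cdot C_0\ge0$ reads $b\ge a\chi$, using $C_0^2=-\chi$ and $\Theta_{i,j}\cdot C_0=0$. Hence the extremal rays are $F$ together with the classes $D_{\mathbf k}=C_0+\chi F-\sum_i(\text{tent at }k_i)$, one for each choice $\mathbf k=(k_1,\dots,k_s)$ with $0\le k_i<n_i$.

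\emph{Step 2: the criterion.} Compute $D_{\mathbf k}^2=-\chi+2\chi-\sum_i k_i(n_i-k_i)/n_i$, using the local height formula contribution of the tent. So $D_{\mathbf k}^2\ge0$ for all $\mathbf k$ if and only if $\delta(\pi)\le\chi$. If some $D_{\mathbf k}^2<0$, then $D_{\mathbf k}$ lies in $\mathcal C_\pi^\vee$ but cannot be nef on a surface with $q=0$. Indeed, the hyperbolic form forces a class of negative square in $\mathcal C_\pi^\vee$ to pair negatively with some effective class, by Riemann--Roch applied to a suitable multiple of an ample class plus a small multiple. I therefore expect to produce an explicit irreducible curve $E$ with $D_{\mathbf k}\cdot E<0$ via Riemann--Roch: $h^0(mD')>0$ for a perturbation $D'$ with $D'^2>0$. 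This shows $\mathcal C_\pi\ne\NE(X)$.

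Conversely, if all $D_{\mathbf k}^2\ge0$, suppose $M$ is a multisection with $D_{\mathbf k}\cdot M<0$. Then $M$ is a fixed component of every $|mD_{\mathbf k}|$ when the latter is nonempty. Writing $D_{\mathbf k}=P+N$ (Zariski decomposition), this contradicts the fact that $D_{\mathbf k}$ is nonnegative on $\mathcal C_\pi$, which contains all curves of negative square except multisections of square $\le -1$. The argument for ruling out such $M$ is the main obstacle. I would handle it by showing that $D_{\mathbf k}$ is pseudoeffective with $D_{\mathbf k}^2\ge0$ and lies in the positive cone closure, so $D_{\mathbf k}\cdot M\ge0$ for every curve $M$ of nonnegative self-intersection. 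For $M^2<0$, I would use $M^2\ge -\chi$-type bounds from adjunction ($K_X\cdot M=(\chi-2)F\cdot M>0$) together with the Hodge index inequality on the span of $D_{\mathbf k},F,M$.

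For (2), polyhedrality gives rational polyhedral effective and nef cones. By \cite[Corollary~2.6]{AHL10} it remains to show that each $D_{\mathbf k}$ is semiample; note that $N(\pi)\le2\chi+3$ implies $\delta(\pi)\le N/4\le\chi$. When $D_{\mathbf k}^2>0$, I would apply Artin's contractibility criterion to the null locus, a union of $C_0$ and fiber components forming ADE (chain-type) configurations away from $C_0$. Then $D_{\mathbf k}$ is the pullback of an ample class on the contraction and is hence semiample. The condition $N\le 2\chi+3$ is what should guarantee that the null configuration is negative definite with rational singularities; checking the fundamental cycle condition at the $C_0$-junction is the delicate computation. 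When $D_{\mathbf k}^2=0$, I would show that $D_{\mathbf k}$ is effective (by Riemann--Roch, using $h^2=0$ since $K_X-D_{\mathbf k}$ has negative degree on $F$) and that its restriction to its support has torsion normal bundle, giving a second elliptic fibration.
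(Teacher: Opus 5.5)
Your Step~1 is correct and matches the paper: your tents are the paper's classes $\omega_T$, read off from the inverse Cartan matrix, and $D_{\mathbf k}^2=\chi-\sum_ik_i(n_i-k_i)/n_i$ is right. The ``only if'' half of (1) also holds, and needs no Riemann--Roch or explicit curve. If $\mathcal C_\pi=\NE(X)$, each $D_{\mathbf k}$ is nef, and nef classes have nonnegative square.

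\textbf{The ``if'' half of (1).} This is the genuine gap, and you flag it yourself as the main obstacle; the route you sketch does not work. For a horizontal curve $M$ with $m=M\cdot F$, the Hodge index theorem on $\langle D_{\mathbf k},F,M\rangle$ gives only $2m\,(D_{\mathbf k}\cdot M)\ge m^2D_{\mathbf k}^2+M^2$. Adjunction gives only $M^2\ge-2-(\chi-2)m$. When $D_{\mathbf k}^2=0$ or is small, these bounds allow $D_{\mathbf k}\cdot M<0$. The Zariski-decomposition remark yields no contradiction, since nonnegativity on $\mathcal C_\pi$ says nothing about curves outside it. What you are missing is that $M$ meets every fiber component nonnegatively, with total intersection $m$ on each fiber. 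Combine this with a fact already in your Step~1: the tent coefficients are nonnegative and maximal at the apex, where they equal $k_i(n_i-k_i)/n_i$. Then $\sum_jc_{ij}\,(M\cdot\Theta_{i,j})\le m\,k_i(n_i-k_i)/n_i$, so $D_{\mathbf k}\cdot M\ge M\cdot C_0+m\,D_{\mathbf k}^2\ge0$. This is the paper's bound $\omega_T\cdot\Gamma\le m\delta(T)$, and it finishes (1) at once.

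\textbf{Part (2).} This part also lacks its essential step, though two of the worries you raise are not real.
\begin{itemize}
\item The case $D_{\mathbf k}^2=0$ never occurs: $\delta(\pi)\le N(\pi)/4\le(2\chi+3)/4<\chi$, so every nonvertical extremal nef ray is big. Your torsion-normal-bundle idea for that case was unsupported anyway.
\item Negative definiteness of the null locus is automatic from the Hodge index theorem. Identifying the null locus as $C_0$ plus the unselected components uses (1): a curve orthogonal to a big nef class has negative square, so it spans an extremal ray of $\NE(X)=\mathcal C_\pi$.
\end{itemize}
The hypothesis $N(\pi)\le2\chi+3$ is needed only for rationality, and that is exactly the computation you leave as ``delicate''. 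The paper does it as follows. Take $Z=aC_0+\sum_iV_i$ on the null locus, and let $b$ be the coefficient of the identity component in $V_i$. Cut the cycle at the selected component and let $\Delta_e$ be the coefficient drops along the two paths from the identity component to the removed one. Then $V_i^2=-\sum_e\Delta_e^2$ and $2ab+V_i^2=\sum_e\Delta_e(a-\Delta_e)\le n_i\lfloor a^2/4\rfloor$; if the identity component is the selected one, $b=0$ and the bound is trivial. Summing over fibers gives $Z^2+K_X\cdot Z\le-\chi a^2+(\chi-2)a+N(\pi)\lfloor a^2/4\rfloor$. For $a>0$ this is at most $-2$, using that the left side is even; for $a=0$ one has $Z^2\le-2$ directly. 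Hence $p_a(Z)\le0$ for every positive cycle, and Artin's criterion applies. With this estimate in place, either your contraction argument or the paper's block theorem gives semiampleness; without it, (2) is unproved.
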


The two assertions reflect the two parts of the Mori dream criterion. The
Shioda--Tate formula describes the N\'eron--Severi space in terms of the zero
section, a fiber, and the root lattices of the reducible fibers. A
nonvertical facet of $\mathcal C_\pi$ is obtained by choosing one component in
each reducible fiber and removing the chosen components from the list of
generators. Its supporting class is obtained from $C_0+\chi F$ by one local
correction in each reducible fiber. If $S$ denotes the chosen collection of
components, the square of the supporting class is $\chi-\delta(S)$, where
$\delta(S)$ is the sum of the corresponding local defects. This leads to the
exact condition $\delta(\pi)\le\chi$ in the first assertion.

For the second assertion, the same description identifies the null loci of
the nonvertical nef rays. In the range $N(\pi)\le2\chi+3$, every positive
cycle supported on one of these null loci has arithmetic genus at most zero.
Artin's rationality criterion and the block theorem then imply semiampleness
of all extremal nef classes.

A useful consequence can be stated solely in terms of the Picard number.

\begin{corollary}
\label{small-rho-cor}
Under the assumptions of Theorem~\ref{Jac-thm}, if
\[
 \rho(X)\le\chi(\mathcal O_X)+3,
\]
then $X$ is a Mori dream surface. In particular, every such surface with
$\rho(X)\le6$ is a Mori dream surface.
\end{corollary}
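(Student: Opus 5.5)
The plan is to reduce the corollary to part (2) of Theorem~\ref{Jac-thm}, using the Shioda--Tate formula to convert the hypothesis on $\rho(X)$ into a bound on $N(\pi)$.

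Under our hypotheses the Mordell--Weil group is finite and every reducible fiber is of type $I_{n_i}$, so its contribution to the trivial lattice is the root lattice $A_{n_i-1}$, of rank $n_i-1$. The Shioda--Tate formula then gives
\[
 \rho(X)=2+\sum_{i=1}^s(n_i-1)=2+N(\pi)-s,
\]
and hence $N(\pi)=\rho(X)-2+s$.

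It therefore suffices to show $\rho(X)-2+s\le 2\chi+3$. Since $\rho(X)\le\chi+3$ by assumption, this reduces to proving $s\le\chi+2$. Every reducible fiber $F_i$ satisfies $n_i\ge2$, so $N(\pi)\ge 2s$. Combining this with $N(\pi)=\rho(X)-2+s$ gives
\[
 s\le\rho(X)-2\le\chi+1,
\]
and therefore
\[
 N(\pi)=\rho(X)-2+s\le (\chi+1)+(\chi+1)=2\chi+2\le 2\chi+3.
\]
Part (2) of Theorem~\ref{Jac-thm} now shows that $X$ is a Mori dream surface.

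For the final sentence of the corollary, $\chi\ge3$ gives $\chi+3\ge6$. Hence $\rho(X)\le6$ implies $\rho(X)\le\chi(\mathcal O_X)+3$, and the first part applies.

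There is no real obstacle here. The only point needing care is the bound on $s$, which follows from $n_i\ge2$ for each reducible fiber, that is, from the fact that a fiber of type $I_1$ is irreducible.
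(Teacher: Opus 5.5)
Your proposal is correct and follows essentially the same route as the paper: the Shioda--Tate identity $N(\pi)=\rho(X)-2+s$, the bound $s\le\rho(X)-2$ from $n_i\ge2$, hence $N(\pi)\le 2\rho(X)-4\le2\chi+2$, and then Theorem~\ref{Jac-thm}(2). The final assertion is handled the same way, using $\chi\ge3$.
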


Thus Theorem~\ref{Jac-thm}(2) also applies to many semistable fibrations
with several reducible fibers, although the examples constructed later use
one-fiber configurations.

The first assertion of Theorem~\ref{Jac-thm} also makes clear where the
remaining semiampleness problem begins. If $\delta(\pi)<\chi$, every
nonvertical extremal nef ray is big. At the boundary $\delta(\pi)=\chi$,
however, isotropic extremal nef rays may occur, and their Iitaka dimension is
not understood in general.

\begin{problem}
\label{isotropic-nef-problem}
Assume the hypotheses of Theorem~\ref{Jac-thm} and suppose that
\[
 \delta(\pi)=\chi.
\]
Let $D$ span a nonvertical extremal ray of $\Nef(X)$ with $D^2=0$.
Is $\kappa(X,D)>0$? In particular, is $D$ semiample?
\end{problem}

When there is a unique reducible fiber, Theorem~\ref{Jac-thm} specializes to
the following.

\begin{corollary}
\label{one-fiber-cor}
Suppose, in addition to the hypotheses of Theorem~\ref{Jac-thm}, that
$\pi$ has exactly one reducible fiber, of type $I_n$. Then the following hold:
\begin{enumerate}
\item
$\mathcal C_\pi=\NE(X)$ if and only if $n\le4\chi$;
\item
if $n\le2\chi+3$, then $X$ is a Mori dream surface.
\end{enumerate}
\end{corollary}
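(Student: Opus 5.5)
This corollary is the case $s=1$ of Theorem~\ref{Jac-thm}, so the plan is to evaluate the invariants $\delta(\pi)$ and $N(\pi)$ for a single fiber of type $I_n$ and substitute them into the two assertions. No new geometric input is needed; the only content is an elementary integer inequality.

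For assertion (2) we have $N(\pi)=n$. The hypothesis $n\le2\chi+3$ is therefore literally the hypothesis $N(\pi)\le 2\chi+3$ of Theorem~\ref{Jac-thm}(2), which gives that $X$ is a Mori dream surface.

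For assertion (1) we have $\delta(\pi)=\lfloor n^2/4\rfloor/n$. By Theorem~\ref{Jac-thm}(1) it suffices to show that $\lfloor n^2/4\rfloor\le n\chi$ holds if and only if $n\le 4\chi$. We split by parity of $n$.

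\emph{If $n=2m$ is even,} then $\lfloor n^2/4\rfloor=m^2$. The inequality becomes $m^2\le 2m\chi$, that is, $m\le 2\chi$, which is $n\le4\chi$.

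\emph{If $n=2m+1$ is odd,} then $\lfloor n^2/4\rfloor=m^2+m$. The inequality becomes $m(m+1)\le(2m+1)\chi$. We check both directions.
\begin{itemize}
\item If $n\le 4\chi$, then, since $n$ is odd, $n\le4\chi-1$, so $m\le 2\chi-1$. Hence $m(m+1)\le m\cdot 2\chi<(2m+1)\chi$.
\item If $n>4\chi$, then $m\ge 2\chi$. Hence $m(m+1)\ge 2\chi(m+1)>(2m+1)\chi$.
\end{itemize}

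This establishes the equivalence in both parity cases, and assertion (1) follows. The only step that needs any care is the odd case, where the floor function makes the threshold less obvious; the explicit check above shows it is still exactly $n\le4\chi$.
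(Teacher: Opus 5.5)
Your proposal is correct and follows the paper's proof: both specialize Theorem~\ref{Jac-thm} using $N(\pi)=n$ and $\delta(\pi)=\lfloor n^2/4\rfloor/n$, and then verify $\delta(\pi)\le\chi\iff n\le4\chi$ by splitting on the parity of $n$. The only difference is cosmetic: in the odd case the paper writes $\delta(\pi)=n/4-1/(4n)$, whereas you compare the integers $m(m+1)$ and $(2m+1)\chi$ directly.
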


The second bound is sharp for the uniform Artin argument: once
$n\ge2\chi+4$, some nonvertical facet contains a positive cycle of arithmetic
genus one. Thus the range between the Artin bound and the cone bound is the
first place where a new semiampleness obstruction can occur. For $\chi=3$,
the first case not settled by the uniform criterion is $n=10$. The next case
already shows that semiampleness can fail.

\begin{theorem}
\label{I11-thm}
There exists a Jacobian elliptic surface
$\pi\colon X\to\mathbb P^1$ over $\mathbb C$ with
$\chi(\mathcal O_X)=3$, $\kappa(X)=1$, $\rho(X)=12$, and singular-fiber
configuration $I_{11}+25I_1$. If $C_0$ is its zero section and
$F=C_1+\cdots+C_{11}$ is the reducible fiber, then
\[
 \NE(X)=\sum_{i=0}^{11}\mathbb R_{\ge0}[C_i].
\]
Moreover, $X$ admits a big and nef divisor which is not semiample. In
particular, $X$ is not a Mori dream surface.
\end{theorem}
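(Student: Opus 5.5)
We first construct the surface, then deduce the cone from Corollary~\ref{one-fiber-cor}, and finally exhibit a nef class that fails to be semiample. All the real work is in the last step.

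\emph{Construction.} We want a Weierstrass model $y^2=x^3+a(t)x+b(t)$ with $\deg a\le 12$, $\deg b\le 18$, so that $\chi=3$ and $\deg\Delta=36$, having an $I_{11}$ fiber at $t=\infty$ and $25$ further simple zeros of $\Delta$. A convenient way to force $I_{11}$ is a deformation argument. We impose $\operatorname{ord}_\infty\Delta\ge 11$ with $a,b$ nonvanishing at $\infty$; this is multiplicative reduction. Requiring $\Delta\equiv0$ to order $11$ in the local coordinate is a system of polynomial conditions. We then exhibit an explicit solution, for instance starting from a rational surface with an $I_n$ fiber and perturbing, or by solving $4a^3+27b^2=O(s^{11})$ with $a=-3u^2$, $b=2u^3+s^{11}w$ type ansätze in the local parameter $s$. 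Genericity of the remaining parameters, checked by computing the discriminant for one explicit choice, gives $25$ fibers of type $I_1$.

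The Picard number then follows from Shioda--Tate: $\rho=2+10+\operatorname{rk}MW$. We need $\operatorname{rk}MW=0$ and finite torsion. Torsion is constrained by the configuration; a $p$-torsion section with $p\mid 11$ would force other reducible fibers, so torsion is trivial. For rank zero, the plan is to reduce modulo a good prime $p$ and use the Tate conjecture bound, i.e.\ point counts on the reduction give $\rho(X_{\bar{\mathbb F}_p})\le 12$, hence $\rho(X)=12$. Since $n=11\le 4\chi=12$, Corollary~\ref{one-fiber-cor}(1) gives $\NE(X)=\sum_{i=0}^{11}\mathbb R_{\ge0}[C_i]$.

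\emph{Failure of semiampleness.} By the facet description preceding Theorem~\ref{Jac-thm}, a nonvertical facet corresponds to choosing one component $C_j$ of the $I_{11}$ fiber. Its supporting nef class is $D_j=C_0+3F+(\text{local correction})$, with $D_j^2=3-\delta$ where $\delta=\lfloor 121/4\rfloor/11\cdot(\text{position factor})$. This is positive, so $D_j$ is big. The null locus $Z_j=C_0\cup\bigcup_{i\ne j}C_i$ is $C_0$ together with a chain of ten $(-2)$-curves, i.e.\ $C_0$ attached to a chain that, after choosing $j$ opposite to the component meeting $C_0$, contains a cycle $Z=2C_0+\dots$ of arithmetic genus one. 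This is precisely the case where $n\ge2\chi+4=10$ and Artin's criterion fails.

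For $D$ big and nef, semiampleness is equivalent to $D|_{Z_j}$ being torsion in $\operatorname{Pic}$ of every (or of a sufficiently large) cycle supported on $Z_j$, i.e.\ to contractibility of the corresponding elliptic-type configuration with torsion normal data. So we compute the restriction $\mathcal O_Z(D)\in\operatorname{Pic}^0(Z)$ of the genus-one cycle $Z$. We expect $\operatorname{Pic}^0$ to contain a $\mathbb G_m$ or elliptic piece, and $\mathcal O_Z(D)$ to be determined by the position of the point $C_0\cap C_k$ relative to the nodes of the chain. Equivalently, it is determined by the value of a coordinate on $C_0\cong\mathbb P^1$ or on the fiber component, which in turn is expressed through the Weierstrass coefficients. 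The claim is that for our explicit surface this element is non-torsion.

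This is the main obstacle. Our first attempt will be to identify $\operatorname{Pic}^0(Z)\cong\mathbb C^*$ (multiplicative, coming from the cycle-like structure of the chain plus $C_0$) and compute the class as a cross-ratio of the four special points on $C_0$ or on $C_k$. We then check, via reduction modulo two primes or a height argument, that this value is not a root of unity.
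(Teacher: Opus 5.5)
There is a genuine gap. Your construction and cone steps are fine in outline: the ansatz $a=-3u^2$, $b=2u^3+s^{11}w$ forces $\operatorname{ord}\Delta\ge11$, and $n=11\le4\chi$ gives the cone. The step that carries the theorem, however, rests on a wrong picture of $\operatorname{Pic}^0(Z)$ and would fail.

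\textbf{Why the cross-ratio plan cannot work.} Once the selected component $C_6$ is omitted, the support of $Z=2C_0+5C_1+\sum_{j=1}^4(5-j)(A_j+B_j)$ is a tree of smooth rational curves. $C_0$ meets only $C_1$, in one point, and $A_4,\dots,A_1,C_1,B_1,\dots,B_4$ is a chain with open ends.
\begin{itemize}
\item There is no cycle, so $\operatorname{Pic}^0(Z)$ has no $\mathbb G_m$-part.
\item No component carries more than three special points, so there is no cross-ratio to compute. The reduced configuration has no moduli at all.
\item In fact $h^1(\mathcal O_Z)=1$, and $\operatorname{Pic}^0(Z)\simeq\mathbb G_a$ comes entirely from the nonreduced structure of $Z$.
\end{itemize}
So the invariant you need is infinitesimal. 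Testing ``not a root of unity'' by reduction modulo primes or by heights is beside the point: in $\mathbb G_a$ over $\mathbb C$, every nonzero element is already non-torsion.

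\textbf{What is missing: a way to locate and compute this infinitesimal class.} The paper proceeds in three steps.
\begin{enumerate}
\item It uses the linear equivalence $2L=11(K_X+Z)-D$, with $D=\sum_{j=1}^5 j(A_j-B_j)$, together with $\omega_Z\simeq\mathcal O_Z$ for the minimally elliptic cycle. This gives $\mathcal O_Z(2L)\simeq\mathcal O_Z(-D)$.
\item It shows $\mathcal O_{Z'}(D)\simeq\mathcal O_{Z'}$ for $Z'=Z-C_0$, because $H^1(\mathcal O_{Z'})=0$ and $D$ has degree zero on every component. Hence the obstruction lies in the one-dimensional space $H^1(C_0,N^*_{C_0/X}(-5p))\simeq\mathbb C\cdot[t^4]$.
\item It evaluates the obstruction with the explicit trivialization $f=(w+c(t))/(w-c(t))$, where $c=\sqrt{1+t^4}$, coming from the normalization of the nodal fibers near $C_1$. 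The first-order term of $f$ along $C_0$ has $4$-jet $2+t^4$, so $\eta(D)=\pm[t^4]\ne0$.
\end{enumerate}

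\textbf{A secondary issue: $\rho=12$.} You need $\rho=12$ and the nonvanishing on the same surface. Your plan to bound $\rho$ by point counts on a reduction, with $b_2=34$, is a heavy computation that you have not carried out. The paper gets a suitable member by combining openness of $\{\eta\ne0\}$ on the labeled stratum, purity of its dimension $18$, and Kloosterman's bound $\dim\mathrm{NL}_{13}=17$.
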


The proof identifies a minimally elliptic cycle in the null locus of a dual
nef ray and computes a nonzero additive obstruction to triviality on that
cycle for an explicit Weierstrass model. We then show that the obstruction
varies regularly over the cover on which the components of the
$I_{11}$-fiber are labeled, and that the $I_{11}+25I_1$ stratum is pure of
dimension $18$. The Noether--Lefschetz estimate therefore provides a member
of Picard number $12$ on which the obstruction remains nonzero.

Our main application answers the existence question in every Picard number.

\begin{theorem}
\label{Mainthm}
For every integer $\rho\ge2$, there exists a Jacobian elliptic surface $X$
with $\kappa(X)=1$ and $\rho(X)=\rho$ that is a Mori dream surface.
\end{theorem}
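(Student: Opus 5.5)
The plan is to reduce Theorem~\ref{Mainthm} to Corollary~\ref{one-fiber-cor}(2), or more generally Theorem~\ref{Jac-thm}(2), by exhibiting suitable Jacobian elliptic surfaces. By Shioda--Tate, a Jacobian elliptic surface with finite Mordell--Weil group and reducible fibers of types $I_{n_1},\dots,I_{n_s}$ has
\[
 \rho(X)=2+\sum_{i=1}^s(n_i-1).
\]
For a given $\rho\ge3$ I will look for a surface with $\chi=3$ (so $\kappa(X)=1$), a single reducible fiber of type $I_n$ with $n=\rho-1$, and trivial Mordell--Weil group. Then $n\le 2\chi+3=9$ covers $\rho\le10$. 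For larger $\rho$ I will take $\chi$ large enough, for instance $\chi\ge\max\{3,\lceil(\rho-4)/2\rceil\}$, so that $n=\rho-1\le2\chi+3$. The case $\rho=2$ lies outside Theorem~\ref{Jac-thm}, since there are no reducible fibers. There I will argue directly: take $\chi\ge3$, all fibers irreducible, and Mordell--Weil rank $0$. Then $\NS(X)=\langle C_0,F\rangle$, and $\NE(X)$ is spanned by $C_0$ (with $C_0^2=-\chi$) and $F$. The nef cone is spanned by $F$, which is semiample, and by $C_0+\chi F$, which is big and nef with null locus $C_0\cong\mathbb P^1$. Artin's criterion (a single $(-\chi)$-curve) then gives semiampleness. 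Such a surface is a generic Weierstrass model $y^2=x^3+a_4x+a_6$ with $\deg a_4=4\chi$ and $\deg a_6=6\chi$; a very general member has $\rho=2$ by the Noether--Lefschetz type estimate already used in the proof of Theorem~\ref{I11-thm}.

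For $\rho\ge3$, the existence step is the heart of the argument. I need a Weierstrass model over $\mathbb P^1$ with $\chi(\mathcal O_X)=\chi$ and an $I_n$ fiber ($n=\rho-1$) at $t=0$, say, whose other singular fibers are all $I_1$, with Picard number exactly $2+(n-1)$ and torsion-free, rank-zero Mordell--Weil group. To produce the $I_n$ fiber I will use the standard local parametrization: the discriminant must vanish to order $n$ at $t=0$ while $a_4,a_6$ stay nonvanishing there. Writing $a_4=-3u^2+t^{k}(\dots)$ and $a_6=2u^3+t^k(\dots)$ (the Tate/Néron ansatz) imposes $n$ conditions on a space of coefficients of dimension about $10\chi$. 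Because $n\le2\chi+3$ is small compared with $10\chi$, the locus is nonempty and of expected codimension. I will check nonemptiness by writing one explicit member per $n$, or by a deformation or degeneration argument from a rational elliptic surface pulled back by a base change of degree $\chi$.

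Next I must ensure that the Mordell--Weil group is finite with the predicted Picard number. Here I intend to use the dimension count for the $I_n+\dots$ stratum together with the Noether--Lefschetz estimate, as in Theorem~\ref{I11-thm}. A very general member of an irreducible component of the stratum has $\rho$ equal to the minimum forced by the configuration. Torsion sections could also cause trouble, but it suffices that the Mordell--Weil group be finite, and torsion is allowed by Theorem~\ref{Jac-thm}. So I only need rank zero, which is exactly the Picard number statement.

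The main obstacle I expect is this genericity step: showing that the stratum with the prescribed $I_n$ fiber has the expected dimension and that its very general member has no extra algebraic classes. My first attempt will be to cite or reproduce the purity and dimension count used for $I_{11}+25I_1$, which adapts verbatim to smaller $n$. Failing that, I will exhibit for each $n$ an explicit member, reduce modulo a prime, and bound $\rho$ via point counts. Once such surfaces exist, Corollary~\ref{one-fiber-cor}(2) yields the Mori dream property with $\rho(X)=n+1=\rho$, completing the proof.
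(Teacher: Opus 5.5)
Your proposal is correct. For $\rho=2$ it coincides with the paper's argument: a very general Weierstrass model with $\NS(X)=\mathbb Z[C_0]\oplus\mathbb Z[F]$, as in Cox's work, followed by the direct computation of Theorem~\ref{Pic2-Jac-thm}.

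For $\rho\ge3$ you take a genuinely different route. The paper never works in the moduli of Weierstrass models. It pulls back a very general elliptic K3 surface with $\NS\simeq U$ along a degree-$n$ map $\mathbb P^1\to\mathbb P^1$ totally ramified over one nodal fiber, and proves $\operatorname{MW}=0$ directly. A new section would make the map factor through the normalization of one of the countably many rational multisections of the K3, and each such factorization locus is a proper closed subset of the $(n+1)$-dimensional space of admissible maps. Shioda--Tate then gives $\rho=n+1$ with $\chi=2n$, so $n\le2\chi+3$ is automatic.

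You instead fix $\chi\ge\max\{3,\lceil(\rho-4)/2\rceil\}$ and take a component of the locus $\{\operatorname{ord}_p\Delta\ge n\}$. By the jet count in Lemma~\ref{I11-pure-dimension-lem}, its image in moduli has dimension at least $10\chi-(n+1)$, which you compare with $\dim\operatorname{NL}_{n+2}\le10\chi-(n+2)$. A very general member then has $\rho=n+1$. By Shioda--Tate this simultaneously forces the fiber at $p$ to be exactly $I_n$, excludes further reducible fibers, and gives Mordell--Weil rank zero.

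The dimension count alone does not give nonemptiness, but an explicit member is immediate, exactly as in the $I_{11}$ example: take $y^2=x^3+a_2(t)x^2+t^nq(t)$ with $a_2(0)\ne0$ and $q$ general of degree $6\chi-n$. Minimality and multiplicative reduction at $p$ are open conditions, so they then hold on a dense open subset of the component through this point.

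What each route buys:
\begin{itemize}
\item Yours keeps $\chi$ small, for instance $\chi=3$ for all $3\le\rho\le10$, and is close in spirit to Section~\ref{I11-section}. But it rests on Kloosterman's higher Noether--Lefschetz bound for arbitrary $\chi$, while the paper only invokes it at $\chi=3$. You must check that the cited bound covers $r=n+2$ for every $\chi$ you use; with $\chi=3$ alone you only reach $\rho\le10$.
\item The paper's construction is more elementary and self-contained, needing only countability of rational curves on a K3 surface and a dimension count for polynomial maps. It also yields a trivial rather than merely finite Mordell--Weil group, at the price of the large value $\chi=2(\rho-1)$.
\end{itemize}
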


For $\rho=2$, we use very general Jacobian elliptic surfaces whose
N\'eron--Severi group is generated by the zero section and a fiber. For
$\rho\ge3$, we start from a very general elliptic K3 surface and perform a
suitably chosen base change of degree $\rho-1$, totally ramified over one
nodal fiber. The resulting surface has a unique reducible fiber of type
$I_{\rho-1}$ and trivial Mordell--Weil group, so
Theorem~\ref{Jac-thm} applies.

Since a rational polyhedral Mori cone implies bounded negativity
\cite[Proposition~1.1]{AL11}, the examples above also provide elliptic
surfaces of Kodaira dimension one satisfying the bounded negativity
conjecture \cite[Conjecture~1.1]{Bauer et al 2013}. They motivate the broader
problem:
\begin{problem}
Classify Mori dream surfaces of Kodaira dimension one.
\end{problem}

The paper is organized as follows. Section~\ref{Pre} records the surface and
elliptic-fibration tools used later. We compare polyhedrality with finiteness
of the Mordell--Weil and automorphism groups in
Section~\ref{polyhedral-section}. Section~\ref{proof-section} describes the
fibration cone and proves Theorem~\ref{Jac-thm}, including the sharp
one-fiber form of the Artin bound. Section~\ref{I11-section} proves
Theorem~\ref{I11-thm}, and Section~\ref{Example-Sect} constructs the examples
required for Theorem~\ref{Mainthm}.

\subsection*{Acknowledgments}
Antonio Laface was partially supported by Proyecto FONDECYT Regular n.~1230287.
Sichen Li would like to thank Guolei Zhong for many helpful conversations and
Jonghae Keum for kindly answering his questions. Jihao Liu was partially
supported by the National Key R\&D Program of China
(grant no.~2024YFA1014400).

\section{Preliminaries}
\label{Pre}
We work over $\mathbb C$. By a curve we mean a reduced and irreducible curve.
For a projective variety $X$, we write $\NE(X)$ for the closed Mori cone,
$\Nef(X)$ for the nef cone, and $\mathrm{SAmp}(X)$ for the semiample cone. If
$D$ is a Cartier divisor, then $\mathbf B(D)$ denotes its stable base locus.

\subsection{Mori dream surfaces}
For the surfaces considered here, the Mori dream property is characterized by
polyhedrality and semiampleness.

\begin{proposition}
\label{char-MDS-prop}
Let $X$ be a smooth projective surface with $q(X)=0$. Then $X$ is a Mori dream
surface if and only if $\NE(X)$ is rational polyhedral and
$\Nef(X)=\mathrm{SAmp}(X)$.
\end{proposition}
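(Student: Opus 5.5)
This is essentially the Hu--Keel characterization specialized to surfaces, so I will reduce it to the definition of a Mori dream space together with the identification of the relevant cones on a surface. Recall that a normal $\mathbb Q$-factorial projective variety $X$ is a Mori dream space if three conditions hold. First, $\mathrm{Pic}(X)$ is finitely generated, equivalently $\mathrm{Pic}(X)_{\mathbb Q}=N^1(X)_{\mathbb Q}$. Second, $\Nef(X)$ is generated by finitely many semiample classes. Third, there are finitely many small $\mathbb Q$-factorial modifications $f_i\colon X\dashrightarrow X_i$ such that each $X_i$ satisfies the second condition and $\mathrm{Mov}(X)=\bigcup_i f_i^*\Nef(X_i)$.

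The plan is to check that on a smooth projective surface with $q(X)=0$ the first and third conditions are automatic or trivial. The second condition then becomes exactly ``$\NE(X)$ rational polyhedral and $\Nef(X)=\mathrm{SAmp}(X)$.''

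\emph{Step 1.} Since $q(X)=0$, $\mathrm{Pic}^0(X)$ is trivial. Hence $\mathrm{Pic}(X)=\NS(X)$ is finitely generated, and numerical and linear equivalence agree rationally. Smoothness gives $\mathbb Q$-factoriality.

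\emph{Step 2.} On a surface, small modifications are isomorphisms, since there are no loci of codimension $\ge 2$ to flip. Moreover $\mathrm{Mov}(X)=\Nef(X)$: a divisor whose stable base locus has no divisorial component has a base locus consisting of finitely many points, so it is nef. Conversely, a nef class is a limit of ample classes, hence lies in the closed movable cone. So the third condition reduces to the single chamber $\Nef(X)$ and is implied by the second.

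\emph{Step 3.} The second condition is equivalent to the stated one. Suppose $\Nef(X)$ is generated by finitely many semiample classes. Then $\Nef(X)$ is rational polyhedral, hence so is its dual $\NE(X)$, and $\Nef(X)=\mathrm{SAmp}(X)$ because the semiample cone is convex and contained in the nef cone. Conversely, if $\NE(X)$ is rational polyhedral, then $\Nef(X)$ is rational polyhedral, with rational extremal rays spanned by integral classes. If moreover every nef class is semiample, these finitely many generators are semiample.

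For completeness one can also recall the Cox ring formulation (finite generation of $\mathcal R(X)$, \cite[Corollary~2.6]{AHL10}, \cite[Section~5.1]{ADHL15}), but the Hu--Keel route above suffices. The main subtlety is Step 2, specifically the identification $\mathrm{Mov}(X)=\Nef(X)$ and the convention that on a surface the only small $\mathbb Q$-factorial modification is the identity. I would cite \cite[Proposition~1.11]{HK00} for the equivalence with the effective-cone formulation, and note that $\overline{\mathrm{Eff}}(X)=\NE(X)$ on a surface.
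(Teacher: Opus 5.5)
Your argument is correct, but it takes a different route from the paper. The paper's proof is a two-line citation. It uses the criterion of \cite[Corollary~2.6]{AHL10}, which works through the Cox ring and says: effective cone polyhedral and nef divisors semiample. It then uses \cite[Remark~2.2.27]{Lazarsfeld04} to identify the effective cone of a surface with $\NE(X)$. You instead unwind the definition of \cite{HK00} directly, in three steps. Condition (1) is automatic from $q(X)=0$. Condition (3) collapses, because a small $\mathbb Q$-factorial modification of a normal projective surface is an isomorphism (Zariski's main theorem) and $\mathrm{Mov}(X)=\Nef(X)$. Condition (2) is matched with the stated one by Kleiman duality.

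Your route is self-contained and needs no Cox-ring theory. It also sidesteps a point the citation leaves implicit. Passing from ``$\NE(X)$ rational polyhedral'' to polyhedrality of the non-closed effective cone requires checking that every extremal ray is effective. Negative rays are spanned by curves via Zariski decomposition. Isotropic rays are nef, hence semiample by hypothesis. Your argument avoids this because the Hu--Keel condition (2) is phrased on the nef side. The paper's route, in exchange, ties the statement directly to finite generation of the Cox ring, the formulation used in its introduction.

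One small convention point concerns Step 2. If $\mathrm{Mov}(X)$ denotes the non-closed cone spanned by movable classes, then $\Nef(X)\subseteq\mathrm{Mov}(X)$ should come from condition (2), since semiample classes are movable. Approximating nef classes by ample ones only gives the closure. You only use (2)$\Rightarrow$(3), so this costs nothing.
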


\begin{proof}
This follows from \cite[Corollary~2.6]{AHL10} and
\cite[Remark~2.2.27]{Lazarsfeld04}.
\end{proof}

We shall repeatedly reduce semiampleness to the extremal rays of a polyhedral
nef cone.

\begin{proposition}
\label{NE-reduced-prop}
Let $X$ be a projective variety whose nef cone is rational polyhedral, and let
$H_1,\ldots,H_r$ be integral generators of its extremal rays. Then
\[
 \Nef(X)=\mathrm{SAmp}(X)
\]
if and only if every $H_i$ is semiample.
\end{proposition}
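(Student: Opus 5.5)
The statement is Proposition~\ref{NE-reduced-prop}: when $\Nef(X)$ is rational polyhedral, $\Nef(X)=\mathrm{SAmp}(X)$ exactly when each integral extremal generator $H_i$ is semiample. The plan is to prove the two implications separately, using only that semiample classes form a convex cone stable under positive rational scaling, together with a closure or rationality argument to pass to real classes.

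The forward direction is immediate. Each $H_i$ lies in $\Nef(X)$, so if $\Nef(X)=\mathrm{SAmp}(X)$ then $H_i$ is semiample. Here I interpret $\mathrm{SAmp}(X)$ as the cone in $N^1(X)_{\mathbb R}$ spanned by classes of semiample $\mathbb Q$-Cartier divisors. Semiampleness of an integral class then means that some multiple $mH_i$ is base point free, which is consistent with that cone.

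For the converse, assume every $H_i$ is semiample. The key step is closure under sums: if $A$ and $B$ are semiample, choose $m$ with $mA$ and $mB$ globally generated. Then $m(A+B)$ is globally generated, since products of generating sections of $\mathcal O(mA)$ and $\mathcal O(mB)$ have no common zero. Positive rational multiples of semiample classes are also semiample, by the definition via multiples. Hence every $\mathbb Q_{\ge0}$-combination $\sum a_iH_i$ is semiample. Because $\Nef(X)=\sum\mathbb R_{\ge0}H_i$ by polyhedrality, every rational nef class is semiample.

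It remains to handle real classes, and this convention point is the only potential obstacle. If $\mathrm{SAmp}(X)$ is defined as the convex cone generated by semiample classes, then any real nef class $\sum r_iH_i$ with $r_i\ge0$ already lies in that cone. So $\Nef(X)\subseteq\mathrm{SAmp}(X)$. The reverse inclusion $\mathrm{SAmp}(X)\subseteq\Nef(X)$ always holds: a globally generated line bundle has nonnegative degree on every curve, and nefness is preserved under positive combinations and limits. I would also note that if semiampleness of a class is only defined up to numerical equivalence, one should fix representatives. Since each $H_i$ is integral, we work with the chosen divisors $H_i$ themselves, so nothing beyond the generators is needed.
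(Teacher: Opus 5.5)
Your proof is correct and follows the paper's argument. The forward direction is immediate in both. For the converse, both you and the paper write a (multiple of a) nef class as a nonnegative integral combination of the $H_i$ and use that a sum of semiample divisors is semiample; you justify this explicitly via products of generating sections, and you also spell out the convention for real classes. One step is implicit in both arguments: a rational class in $\sum_i\mathbb R_{\ge0}H_i$ is a nonnegative \emph{rational} combination of the $H_i$, which follows from Carath\'eodory's theorem and the rationality of the $H_i$.
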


\begin{proof}
One implication is immediate. Conversely, after multiplication by a positive
integer, every integral nef divisor is a nonnegative integral sum of the
$H_i$. A common positive multiple of this sum is globally generated.
\end{proof}

\subsection{Jacobian elliptic surfaces}
An elliptic fibration is a surjective morphism with connected fibers from a
smooth projective surface to a smooth projective curve whose general fiber is
an elliptic curve. All elliptic fibrations in this paper are relatively
minimal. A Jacobian elliptic fibration is an elliptic fibration with a chosen
zero section; its sections form the Mordell--Weil group.

\begin{proposition}
\label{kod1-basic-prop}
Let $\pi\colon X\to\mathbb P^1$ be a Jacobian elliptic surface with
$\chi:=\chi(\mathcal O_X)>0$, let $F$ be a general fiber, and let $C$ be a
section. Then
\[
 q(X)=0,\qquad p_g(X)=\chi-1,\qquad C^2=-\chi,
 \qquad K_X\sim(\chi-2)F.
\]
In particular, $\kappa(X)=1$ if and only if $\chi\ge3$.
\end{proposition}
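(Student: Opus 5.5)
The plan is to derive all four assertions from the standard theory of elliptic surfaces: the canonical bundle formula, Noether's formula, and the adjunction formula applied to a section. First, $q(X)=0$. Since $\pi$ has a section, it has no multiple fibers. Then $\pi_*\mathcal O_X=\mathcal O_{\mathbb P^1}$ and the Leray spectral sequence give $q(X)=h^1(\mathcal O_{\mathbb P^1})+h^0(R^1\pi_*\mathcal O_X)$. By relative duality, $R^1\pi_*\mathcal O_X\cong \mathbb L^{-1}$, where $\mathbb L$ is the fundamental line bundle of the fibration. We have $\deg\mathbb L=\chi(\mathcal O_X)=\chi$; this is standard and also follows from Noether's formula, since $K_X^2=0$ and $e(X)=12\deg\mathbb L$. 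Hence $\deg\mathbb L^{-1}=-\chi<0$, so $h^0(\mathbb L^{-1})=0$, and therefore $q(X)=0$. It follows that $p_g(X)=\chi-1+q(X)=\chi-1$.

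Next comes the canonical bundle formula. Kodaira's formula for a relatively minimal elliptic fibration without multiple fibers gives $K_X\sim\pi^*(K_{\mathbb P^1}\otimes\mathbb L)$. This pullback has degree $-2+\chi$ on $\mathbb P^1$, and all fibers are linearly equivalent because the base is $\mathbb P^1$. Hence $K_X\sim(\chi-2)F$.

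For the section, $C\cong\mathbb P^1$ because it maps isomorphically onto the base, and $C\cdot F=1$. Adjunction gives $-2=2g(C)-2=C^2+K_X\cdot C=C^2+(\chi-2)$. Therefore $C^2=-\chi$.

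Finally, the Kodaira dimension. If $\chi\ge3$, then $mK_X\sim m(\chi-2)F$ with $\chi-2>0$. Its sections are pulled back from $\mathcal O_{\mathbb P^1}(m(\chi-2))$, so the number of sections grows linearly in $m$, which gives $\kappa=1$. If $\chi=2$, then $K_X\sim0$ and $\kappa=0$. If $\chi=1$, then $-K_X\sim F$ is effective and nonzero, so $\kappa=-\infty$. Since $\chi>0$ by hypothesis, this covers all cases, and $\kappa(X)=1$ holds exactly when $\chi\ge3$.

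The main point needing care is the identification $\deg\mathbb L=\chi$ together with the canonical bundle formula. I expect to cite these as standard results (Kodaira; Miranda's notes on elliptic surfaces) rather than reprove them. The rest of the argument is formal.
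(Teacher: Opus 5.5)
Your proposal is correct and follows essentially the same route as the paper, whose proof simply cites the standard facts: $q(X)=0$ from Friedman, and the canonical bundle formula $K_X\sim(\chi-2)F$ together with $C^2=-\chi$ from Sch\"utt--Shioda. You unpack these citations: $q(X)=0$ via the Leray spectral sequence with $R^1\pi_*\mathcal O_X\cong\mathbb L^{-1}$ and $\deg\mathbb L=\chi$, and $C^2=-\chi$ via adjunction on the section, which is how the cited results are proved; your case analysis for $\kappa(X)$ is also fine.
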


\begin{proof}
The equality $q(X)=0$ follows from
\cite[Chapter~7, Lemma~14, p.~176]{Friedman98}, and hence
$p_g(X)=\chi-1$. The canonical bundle formula gives
$K_X\sim(\chi-2)F$; see \cite[Theorem~5.44]{SS19}. Finally,
$C^2=-\chi$ by \cite[Corollary~5.45]{SS19}. The assertion about the Kodaira
dimension follows immediately.
\end{proof}

We also use the following consequence of the Shioda--Tate formula.

\begin{proposition}
\label{jac-rho-prop}
Let $\pi\colon X\to B$ be a Jacobian elliptic fibration. Let
$F_1,\ldots,F_s$ be its reducible fibers, and write
$F_i=C_{i,0}+\cdots+C_{i,m_i-1}$, where $C_{i,0}$ is met by the zero section
$C_0$. If $\mathrm{MW}(\pi)$ is finite, then
\[
 \rho(X)=2+\sum_{i=1}^s(m_i-1),
\]
and the classes of $C_0$, a general fiber $F$, and the components $C_{i,j}$
with $1\le i\le s$ and $1\le j\le m_i-1$ form a basis of
$\NS(X)_{\mathbb Q}$.
\end{proposition}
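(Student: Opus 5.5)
The plan is to derive the statement from the Shioda--Tate formula together with the nondegeneracy of the intersection form on $\NS(X)$. Shioda--Tate gives $\rho(X)=2+\sum_{i=1}^s(m_i-1)+\operatorname{rank}\mathrm{MW}(\pi)$. More precisely, there is an isomorphism $\mathrm{MW}(\pi)\cong \NS(X)/T$, where $T$ is the trivial lattice generated by $C_0$, $F$, and all fiber components. If $\mathrm{MW}(\pi)$ is finite, its rank is zero, so $T\otimes\mathbb Q=\NS(X)_{\mathbb Q}$. This immediately gives the formula for $\rho(X)$, and it also shows that the listed classes span $\NS(X)_{\mathbb Q}$. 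To see the spanning directly: each $C_{i,0}$ equals $F-\sum_{j\ge1}C_{i,j}$ in $\NS(X)$, so it lies in the span of $F$ and the $C_{i,j}$ with $j\ge1$. Components of irreducible fibers are numerically equivalent to $F$. Hence the span of $C_0$, $F$, and the $C_{i,j}$ with $j\ge1$ is all of $T_{\mathbb Q}$.

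It remains to prove that these $2+\sum_i(m_i-1)$ classes are linearly independent. I would compute their Gram matrix and show it is nondegenerate. We have
\[
C_0^2=-\chi,\qquad C_0\cdot F=1,\qquad F^2=0,
\]
where $C_0^2=-\chi$ comes from Proposition~\ref{kod1-basic-prop}; for a general base, one has $C_0^2=-\chi$ by the same corollary in \cite{SS19}. The zero section meets only $C_{i,0}$ in each fiber, so $C_0\cdot C_{i,j}=0$ for $j\ge1$. Likewise $F\cdot C_{i,j}=0$. Components lying in different fibers do not meet. Therefore the Gram matrix is block diagonal: one block is the hyperbolic plane spanned by $C_0$ and $F$, with determinant $-1$, and the other blocks are the intersection matrices of $\{C_{i,j}\}_{j\ge1}$. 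By Zariski's lemma, the intersection form on the components of a fiber is negative semidefinite with kernel spanned by the fiber itself. Omitting one component therefore leaves a negative definite matrix; in fact it is $-1$ times the Cartan matrix of the corresponding root lattice. Consequently the whole Gram matrix is nonsingular, so the classes are independent in $\NS(X)_{\mathbb Q}$.

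The only point needing care is citing Shioda--Tate in the right form, namely that rank zero for $\mathrm{MW}(\pi)$ forces $\NS(X)_{\mathbb Q}=T_{\mathbb Q}$. I would quote \cite[Theorem~6.3 and Corollary~6.13]{SS19}. Everything else is the routine block computation above, and Zariski's lemma supplies the negative definiteness of the fiber blocks.
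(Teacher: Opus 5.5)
Your proof is correct and follows essentially the same route as the paper: Shioda--Tate gives the rank, and independence comes from the orthogonality of $C_0$ and $F$ to the non-identity components together with negative definiteness of each fiber block. Your block-diagonal Gram determinant packages the paper's step-by-step elimination (intersect with $F$, then use $D^2=0$ with negative definiteness, then intersect with $C_0$). Note also that the exact value of $C_0^2$ is irrelevant, since the block spanned by $C_0$ and $F$ has determinant $-1$ whatever $C_0^2$ is.
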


\begin{proof}
The rank formula follows from Shioda--Tate
\cite[Corollary~6.7]{SS19}. Suppose that
\[
 D=aC_0+bF+\sum_{i=1}^s\sum_{j=1}^{m_i-1}c_{i,j}C_{i,j}\equiv0.
\]
Intersecting with $F$ gives $a=0$. For each $i$, the intersection matrix of
$C_{i,1},\ldots,C_{i,m_i-1}$ is negative definite, so $D^2=0$ forces every
$c_{i,j}$ to vanish. Intersecting with $C_0$ then gives $b=0$.
\end{proof}

\begin{remark}
Proposition~\ref{jac-rho-prop} gives a basis of the N\'eron--Severi space; it
does not assert that the zero section and the fiber components generate the
Mori cone. The exact condition for this equality is established in
Theorem~\ref{Jac-thm}(1).
\end{remark}

\subsection{Blocks and semiampleness}
Let $D$ be a nef and big divisor on a smooth projective surface. Its null locus
is finite, and the intersection form on the classes of its components is
negative definite by the Hodge index theorem. A \emph{block} for $D$ is an
effective, possibly nonreduced divisor $B$ supported precisely on this null
locus and satisfying $B\cdot C<0$ for each component $C$. Such blocks exist;
see \cite[Section~5.1.2]{ADHL15}.

\begin{proposition}
\label{block-semi-prop}
Let $D$ be a nef and big divisor on a smooth projective surface, and let $B$ be
a block for $D$. Then
\[
 \mathbf B(D)=\mathbf B(D|_B),
\]
where the right-hand side is the stable base locus of the restrictions
$\mathcal O_B(mD)$. In particular, if $R$ is a connected component of the
possibly nonreduced divisor $B$ and $H^1(R,\mathcal O_R)=0$, then
\[
 R\cap\mathbf B(D)=\varnothing.
\]
\end{proposition}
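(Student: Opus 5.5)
The plan is to prove the two assertions separately: first compare the stable base locus of $D$ with that of its restriction to a block, and then use the vanishing $H^1(R,\mathcal O_R)=0$ to show that $D$ has no base points on $R$.

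\textbf{The equality $\mathbf B(D)=\mathbf B(D|_B)$.} The inclusion $\mathbf B(D|_B)\subseteq\mathbf B(D)$ holds because sections of $mD$ restrict to sections of $\mathcal O_B(mD)$. A point outside $B$ is also not in $\mathbf B(D)$. Indeed, $D$ is nef and big, so by Nakamaye's theorem (or directly by Zariski's theorem on surfaces) the stable base locus of $D$ is contained in its null locus $\mathrm{Supp}(B)$. It remains to lift sections from $B$ to $X$ for large multiples. Consider the exact sequence
\[
0\to\mathcal O_X(mD-B)\to\mathcal O_X(mD)\to\mathcal O_B(mD)\to0 .
\]
It suffices to show that $H^1(X,\mathcal O_X(mD-B))=0$ for $m\gg0$ (possibly after passing to a multiple of $B$). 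Write $mD-B=K_X+(mD-B-K_X)$. Because $B\cdot C<0$ for every component $C$ of the null locus, the class $-B$ is positive on those curves. Hence $mD-B$ is positive on every curve: on curves outside the null locus, $D\cdot C>0$ dominates for large $m$. We also have $(mD-B)^2>0$ for large $m$. So $mD-B$ is ample for $m\gg0$ by Nakai--Moishezon, which needs care: we must show $mD-B$ is ample uniformly. To make the intersection numbers with curves not in the null locus bounded below, use $D=A+E$ with $A$ ample and $E$ effective (Kodaira's lemma); this handles curves not contained in $\mathrm{Supp}(E)$, and there are only finitely many other curves. Then $mD-B-K_X$ is ample for $m$ large as well, and Kodaira vanishing gives $H^1(X,\mathcal O_X(mD-B))=0$. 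Therefore $H^0(X,mD)\to H^0(B,mD|_B)$ is surjective for $m\gg0$, and both inclusions follow.

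\textbf{The vanishing statement.} Let $R$ be a connected component of $B$ with $H^1(R,\mathcal O_R)=0$. Since $D\cdot C=0$ for every component of $R$, the restriction $\mathcal O_R(D)$ has degree zero on each component. A standard dévissage along the components (the degree map identifies $\mathrm{Pic}(R)$ with $\bigoplus\mathbb Z$ once $H^1(R,\mathcal O_R)=0$, by the exponential sequence $H^1(\mathcal O_R)\to\mathrm{Pic}(R)\to H^2(R,\mathbb Z)$) shows $\mathcal O_R(D)\cong\mathcal O_R$. Hence $\mathcal O_R(mD)$ is trivial and has a nowhere-vanishing section. Because $B$ is a disjoint union of its connected components, $\mathcal O_B(mD)=\bigoplus_{R'}\mathcal O_{R'}(mD)$. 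Choose the section that is this nonvanishing section on $R$ and zero elsewhere. It lifts, by the surjectivity above, to a section of $mD$ on $X$ that does not vanish at any point of $R$.

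\textbf{Main obstacle.} The delicate step is the cohomology vanishing $H^1(X,mD-B)=0$, specifically ensuring ampleness of $mD-B-K_X$ uniformly. If this becomes awkward, I would instead replace $B$ by $kB$ and invoke the argument in \cite[Section~5.1.2]{ADHL15}, or use Fujita's vanishing for the nef and big twist.
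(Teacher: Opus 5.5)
The step that fails is the claim that $mD-B-K_X$ is ample for $m\gg0$. For a component $C$ of the null locus, $(mD-B-K_X)\cdot C=-B\cdot C-K_X\cdot C$ does not depend on $m$. The block condition only gives $-B\cdot C\ge1$. Your Kodaira-lemma argument handles the curves off the null locus, but not these, and the problem is real, not a technicality.

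Take a curve $\Gamma$ of genus $g\ge3$, a line bundle $L$ of degree one on $\Gamma$, and $p\colon X=\mathbb P_\Gamma(\mathcal O_\Gamma\oplus L)\to\Gamma$. Let $D$ be the section with $D^2=1$ and $B$ the disjoint section with $B^2=-1$. Then $D$ is nef and big with null locus $B$, and $B$ is a block. Moreover $(-B-K_X)\cdot B=2-2g<0$. Since $D-B\sim p^*L$, one gets $H^1(X,\mathcal O_X(mD-B))=\bigoplus_{i=1}^{m}H^1(\Gamma,L^{i})\ne0$ for every $m$. So surjectivity of $H^0(X,\mathcal O_X(mD))\to H^0(B,\mathcal O_B(mD))$ cannot be obtained from a vanishing theorem for the given block. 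Here the conclusion happens to hold, but your method does not reach it. Fujita vanishing does not apply because $D$ is not ample. Kawamata--Viehweg would need $(K_X+B)\cdot C\le0$ on every null curve, and that is not part of the definition of a block used here.

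Your fallback of replacing $B$ by $kB$ with $k>\max_C K_X\cdot C$ does make $mD-kB-K_X$ ample for $m\gg0$. But then you prove $\mathbf B(D)=\mathbf B(D|_{kB})$, not the stated equality. Getting back from $kB$ to $B$ means extending (powers of) sections of $\mathcal O_B(mD)$ across the thickening $B\subset kB$. That is a genuine infinitesimal obstruction problem. Lemma~\ref{I11-cycle-lem} and the proof of Theorem~\ref{I11-thm} exhibit exactly this: $\mathcal O_{Z'}(D)$ is trivial while $\mathcal O_Z(D)$ is non-torsion. So the first assertion is not established by your argument. The paper gives no proof of its own here, only the citation \cite[Proposition~5.1.2.5 and Remark~5.1.2.6]{ADHL15}.

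The ``in particular'' part, which is all the paper actually uses, can be saved along your lines:
\begin{enumerate}
\item Run the lifting argument with $kB$ in place of $B$.
\item Deduce $H^1(kR,\mathcal O_{kR})=0$ from $H^1(R,\mathcal O_R)=0$ by induction on $j$, using the sequences $0\to\mathcal O_R(-jR)\to\mathcal O_{(j+1)R}\to\mathcal O_{jR}\to0$.
\item For the induction step: $\mathcal O_R(-jR)$ has positive degree on each component of $R$, and $\operatorname{Pic}(R)$ injects into the multidegree lattice. Hence $\mathcal O_R(-jR)\simeq\mathcal O_R(E)$ for an effective Cartier divisor $E$ supported at smooth points of $R_{\mathrm{red}}$. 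Therefore $H^1(R,\mathcal O_R(E))$ is a quotient of $H^1(R,\mathcal O_R)=0$.
\item It follows that $\mathcal O_{kR}(mD)$ is trivial. The section equal to $1$ on $kR$ and $0$ on the other connected components of $kB$ then lifts to $X$.
\end{enumerate}
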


\begin{proof}
This is \cite[Proposition~5.1.2.5 and Remark~5.1.2.6]{ADHL15}.
\end{proof}

\section{Polyhedrality of the Mori cone and finiteness of the Mordell--Weil
and automorphism groups}
\label{polyhedral-section}
We first compare polyhedrality of the Mori cone with finiteness of the
Mordell--Weil and automorphism groups. In Kodaira dimension one,
polyhedrality forces both groups to be finite, but the converse fails.
\begin{proposition}
Let $\pi\colon X\to\mathbb P^1$ be a Jacobian elliptic surface with
$\kappa(X)=1$ and $\chi(\mathcal O_X)>0$. If $\NE(X)$ is rational
polyhedral, then both $\Aut(X)$ and $\mathrm{MW}(\pi)$ are finite.
\end{proposition}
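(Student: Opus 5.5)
We handle the Mordell--Weil group first and then reduce finiteness of $\Aut(X)$ to it.

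\emph{Step 1: $\mathrm{MW}(\pi)$ is finite.} Since $\chi>0$, Proposition~\ref{kod1-basic-prop} shows that every section $C$ satisfies $C^2=-\chi<0$. Each section is therefore an irreducible curve of negative self-intersection, and such a curve spans an extremal ray of $\NE(X)$: if $[C]=a+b$ with $a,b\in\NE(X)$, then $C\cdot a+C\cdot b<0$, so the negative part is forced to be proportional to $C$. Distinct sections are distinct curves, hence not numerically proportional, so they span distinct extremal rays. A rational polyhedral cone has only finitely many extremal rays. Hence there are only finitely many sections, and $\mathrm{MW}(\pi)$ is finite.

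\emph{Step 2: $\Aut(X)$ is finite.} Because $\kappa(X)=1$, the elliptic fibration is the Iitaka fibration of $K_X\sim(\chi-2)F$. It is therefore canonical, and every automorphism preserves $\pi$ and induces an automorphism of the base $\mathbb P^1$. Consider the action of $\Aut(X)$ on $\NS(X)$.
\begin{itemize}
\item[(a)] The group $\Aut(X)$ permutes the finitely many extremal rays of $\NE(X)$, each spanned by a negative curve (and by $F$ in the isotropic boundary case). A finite-index subgroup $G$ therefore fixes every extremal ray. Since these rays span $\NS(X)_{\mathbb R}$, and integral primitive generators must be fixed, $G$ acts trivially on $\NS(X)$.
\item[(b)] An automorphism acting trivially on $\NS(X)$ fixes the class of every curve of negative self-intersection, and hence fixes each such curve. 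In particular, $G$ fixes $C_0$ and every component of every reducible fiber.
\end{itemize}

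We then bound $G$. If $g\in G$, then $g$ fixes $C_0$ pointwise or acts on $C_0\cong\mathbb P^1$ compatibly with its action on the base. The image of $G$ in $\Aut(\mathbb P^1)$ must preserve the finite set of singular fibers. Since $\chi\ge1$, the Euler number is $12\chi>0$, so there are singular fibers, and there are at least three critical values unless the surface is isotrivial. The image is then finite. The kernel consists of fiberwise automorphisms fixing the zero section. These act on the generic fiber $E/\mathbb C(t)$ as automorphisms of the elliptic curve fixing $O$, and this group is finite, of order dividing $4$ or $6$. Translations by sections have already been excluded, since $\mathrm{MW}$ is finite.

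\emph{Main obstacle.} The delicate point is the base image when there are at most two singular values, for instance an isotrivial family with a $\mathbb C^*$-action. My first attempt would be a different route: $\Aut(X)$ acts on $H^0(X,mK_X)$, and the kernel of the action on $\NS(X)$ is finite for surfaces with $q=0$ and $p_g>0$, or more generally the kernel of $\Aut(X)\to\mathrm{O}(H^2(X,\mathbb Z))$ is finite. Combined with finiteness of the image, which follows from the finite set of extremal rays, this gives $|\Aut(X)|<\infty$. I expect to cite finiteness of the kernel of the cohomological representation for non-ruled surfaces, or alternatively to argue that a positive-dimensional $\Aut^0$ would force $X$ to be ruled or to have $q>0$, contradicting $\kappa=1$ and $q=0$.
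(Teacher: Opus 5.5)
Your Step~1 is the paper's argument verbatim. The fallback in your last paragraph is exactly the paper's proof of the $\Aut(X)$ part:
\begin{itemize}
\item the subgroup fixing every extremal ray has finite index and fixes an ample class;
\item $\Aut_0(X)$ is trivial, since $q(X)=0$ and $X$ is not ruled;
\item a Fujiki--Lieberman-type theorem then makes that subgroup finite.
\end{itemize}
Fujiki--Lieberman is the tool to name: the stabilizer of an ample class contains $\Aut_0(X)$ with finite index. What you need is finiteness of the kernel of $\Aut(X)\to\mathrm{GL}(\NS(X))$. Finiteness of the kernel on $H^2(X,\mathbb Z)$ is a weaker statement, not a ``more general'' one, and it does not combine with your bound on the image in $\mathrm{GL}(\NS(X))$.

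Your main fibration route is genuinely different, and it does close. The obstacle you flag never occurs when $\chi\ge3$. If the $j$-invariant is nonconstant, restrict to the complement of at most two points in $\mathbb P^1$. The period map lifts to a holomorphic map $\mathbb C\to\mathbb H$, which is constant by Liouville, so there must be at least three singular fibers. If $j$ is constant, every singular fiber is of type $I_0^*$, $\mathrm{II}$, $\mathrm{III}$, $\mathrm{IV}$, $\mathrm{IV}^*$, $\mathrm{III}^*$ or $\mathrm{II}^*$. Each has Euler number at most $10$, so $e(X)=12\chi\ge36$ forces at least four of them. In either case the image of $\Aut(X)$ in $\Aut(\mathbb P^1)$ preserves at least three points and is finite. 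The kernel $\Aut_{\mathbb P^1}(X)$ embeds, by restriction to the generic fiber, into $E(K)\rtimes\Aut_K(E,O)$, which is finite by your Step~1. This is precisely the argument the paper itself uses for the example in Theorem~\ref{Not-polyhedral-thm}.

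Compared with the paper's proof, this route needs neither your steps (a)--(b) nor Fujiki--Lieberman. It also shows directly that, for Jacobian elliptic surfaces with $\kappa(X)=1$, finiteness of $\mathrm{MW}(\pi)$ already implies finiteness of $\Aut(X)$; polyhedrality enters only through Step~1. The paper's route is shorter and does not use the elliptic fibration for $\Aut(X)$ at all, only the finiteness of the set of extremal rays, $q(X)=0$, and non-ruledness. The price is an appeal to the structure of the automorphism group preserving an ample class.
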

\begin{proof}
We first note that $\Aut_0(X)=\{\mathrm{id}\}$ as in \cite[Section 2.1]{CLS24}.
The action of $\Aut(X)$ permutes the finitely many extremal rays of $\NE(X)$.
The subgroup fixing every ray has finite index and fixes an ample class.
A Fujiki--Lieberman-type theorem (cf. \cite[Theorem 1.4]{Li20}) therefore
implies that this subgroup is finite, and hence $\Aut(X)$ is finite.

Every section $C$ satisfies $C^2=-\chi(\mathcal O_X)<0$ by
Proposition~\ref{kod1-basic-prop}. If $\mathrm{MW}(\pi)$ were infinite, its
elements would give infinitely many negative curves spanning distinct
extremal rays of $\NE(X)$, contradicting polyhedrality. Therefore,
$\mathrm{MW}(\pi)$ is finite.
\end{proof}
We now show that the converse fails.
\begin{theorem}
\label{Not-polyhedral-thm}
There exists a Jacobian elliptic surface
$\pi\colon X\to\mathbb P^1$ with $\kappa(X)=1$ such that $\NE(X)$ is not
rational polyhedral, whereas both $\mathrm{MW}(\pi)$ and $\Aut(X)$ are finite.
\end{theorem}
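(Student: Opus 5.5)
The natural route is to use Theorem~\ref{Jac-thm}(1) from the other side: choose a Jacobian elliptic surface with $\chi\ge3$, finite Mordell--Weil group and only $I_n$ reducible fibers, but with $\delta(\pi)>\chi$. The simplest case is a single reducible fiber of type $I_n$ with $n>4\chi$; for $\chi=3$, $n=13$ would do. Theorem~\ref{Jac-thm}(1) then gives $\mathcal C_\pi\subsetneq\NE(X)$, and this gap has to be turned into non-polyhedrality. It also has to be checked that $\Aut(X)$ is finite. Polyhedrality does not follow formally from $\mathcal C_\pi\ne\NE(X)$, so that step needs a separate argument.

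\textbf{Construction.} I would use the base-change construction sketched after Theorem~\ref{Mainthm}. Start from a very general elliptic K3 surface with $\NS=\langle C_0,F\rangle$ and a nodal fiber, and pull it back along a degree $n$ cyclic cover $\mathbb P^1\to\mathbb P^1$ totally ramified over the image of that nodal fiber and over one general point. The new surface has $\chi=n\cdot 2/\dots$; more precisely, I would pick the degree $d$ so that $\chi=2d$ and the totally ramified nodal fiber becomes $I_d$. Then $d>4\chi=8d$ is impossible, so this one-fiber family does not work. Instead I would ramify over several nodal fibers, or better, choose a surface where the $I_n$ fiber with $n$ large comes directly from a Weierstrass model; for $\chi=3$ one can take $n$ up to about $18$, for instance $I_{13}+23I_1$. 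Finiteness of $\mathrm{MW}$ would come from Shioda--Tate together with a Noether--Lefschetz argument giving $\rho=2+(n-1)$ for a general member of the stratum. Torsion would be ruled out because a torsion section must meet the $I_n$ fiber in a way compatible with the discriminant group $\mathbb Z/n$, and a general member has none.

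\textbf{Non-polyhedrality.} With $\delta>\chi$, take the facet class $D_S$ of $\mathcal C_\pi$ attached to an optimal choice $S$ of components, so that $D_S^2=\chi-\delta(S)<0$. Since $D_S$ is nonnegative on all generators of $\mathcal C_\pi$ but has negative square, Riemann--Roch and $K_X\sim(\chi-2)F$ show that $mD_S$ or some nearby class is effective. So there exist irreducible curves $\Gamma$ with $D_S\cdot\Gamma<0$, and these are not fiber components or sections. To get infinitely many extremal rays, I would argue that each such $\Gamma$ is a multisection with $\Gamma^2<0$, that there are infinitely many of them, and that any negative curve spans an extremal ray. The intended mechanism is as follows. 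The fiber components of the $I_n$ fiber together with $C_0$ form a configuration whose orthogonal lattice is hyperbolic, and reflections or isometries generated by the $(-2)$-curves and the section produce infinitely many negative classes. Each of these is represented effectively, and since $\Aut(X)$ is finite they are not automorphism translates. I expect this step to be the main obstacle, since infinitely many negative curves cannot come from automorphisms or from $\mathrm{MW}$.

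\textbf{Finiteness of $\Aut(X)$.} Every automorphism preserves $|K_X|=|(\chi-2)F|$, hence preserves $\pi$ and so acts on $\mathbb P^1$. It permutes the singular fibers, and a general configuration in a moduli stratum of positive dimension has finite stabilizer in $\mathrm{PGL}_2$. The kernel of the action on the base consists of fiberwise automorphisms, namely translations by $\mathrm{MW}$, which is finite, composed with the finite automorphism group of the generic fiber. Hence $\Aut(X)$ is finite, and combining this with the previous step proves the theorem.
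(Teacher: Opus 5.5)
Your argument for the finiteness of $\Aut(X)$ is essentially the paper's. The step that carries the theorem, non-polyhedrality of $\NE(X)$, is not proved, and the mechanism you sketch for it does not work.

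Choosing $\delta(\pi)>\chi$ only yields $\mathcal C_\pi\subsetneq\NE(X)$. Some $H_S$ has $H_S^2<0$, so it is not nef (no Riemann--Roch is needed for this). That gives at least one extra curve $\Gamma$ with $H_S\cdot\Gamma<0$. Nothing prevents $\NE(X)$ from being polyhedral with finitely many such extra generators, and nothing in the paper controls this range.

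Your proposed source of infinitely many negative classes also fails:
\begin{itemize}
\item When $\mathrm{MW}(\pi)$ is finite, $C_0$ and the fiber components already span $\NS(X)_{\mathbb Q}$ by Proposition~\ref{jac-rho-prop}. Their orthogonal complement is therefore zero.
\item In any case, the orthogonal complement of the hyperbolic plane spanned by $C_0$ and $F$ is negative definite, never hyperbolic.
\item There is no Weyl-group mechanism producing effective classes as on a K3 surface. For $D^2=-2$, Riemann--Roch gives $\chi(\mathcal O_X(D))=\chi-1-\tfrac12(\chi-2)\,D\cdot F$, which is negative once $D\cdot F$ is large. Also, $C_0$ is not a $(-2)$-class at all.
\item Finiteness of $\Aut(X)$ only tells you such curves would not be automorphism translates; it does not help produce them.
\end{itemize}

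The missing idea is to import infinitely many negative curves from a surface known to have them, in a way that does not create sections or automorphisms of $X$. The paper proceeds as follows.
\begin{itemize}
\item It starts from a singular K3 surface $Y$ with an extremal fibration $I_{16}+I_4+4I_1$ and $\mathrm{MW}\simeq\mathbb Z/4\mathbb Z$. By Shioda--Inose, $Y$ has another elliptic pencil of positive Mordell--Weil rank. This gives infinitely many $(-2)$-curves, almost all of them multisections of $\pi_Y$.
\item The double cover of the base branched at the $I_{16}$ and $I_4$ points gives $X$ with configuration $I_{32}+I_8+8I_1$ and $\chi=4$. Its trivial lattice has rank $40=h^{1,1}(X)$, so the Mordell--Weil rank vanishes with no Noether--Lefschetz input.
\item For each such $(-2)$-curve $C$, one has $(p^*C)^2=2C^2=-4$. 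The strict transform $D$ on the resolution satisfies $D^2\le-4$, so some component of $D$ is a negative curve.
\item Distinct $C$ give distinct negative curves, hence infinitely many extremal rays.
\end{itemize}
This example does satisfy $\delta(\pi)=10>4=\chi$, as you anticipated, but that computation plays no role in the argument.

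Two smaller remarks. Finiteness of $\mathrm{MW}(\pi)$ only requires rank zero, so ruling out torsion is unnecessary. The existence of an $I_{13}+23I_1$ surface with $\rho=14$ would need a dimension count like Lemma~\ref{I11-pure-dimension-lem}, which you only assert.
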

\begin{proof}
Let \(\pi_Y\colon Y\to B\simeq\mathbb P^1\) be a semistable extremal
elliptic K3 surface with singular fiber configuration
\[
I_{16}+I_4+4I_1
\]
and Mordell--Weil group
\(\operatorname{MW}(\pi_Y)\simeq\mathbb Z/4\mathbb Z\). The existence of
such a fibration is established by Shimada
\cite[Theorem 2.6(1)]{Shimada00}. Its trivial lattice has rank
\[
2+(16-1)+(4-1)=20,
\]
so \(Y\) is a singular K3 surface. By Shioda--Inose
\cite[Proof of Theorem 5, p. 130]{SI77}, \(Y\) admits another elliptic pencil
with infinite Mordell--Weil group. Its infinitely many sections are rational
\((-2)\)-curves. Since \(\pi_Y\) has only finitely many vertical components
and finitely many sections, infinitely many of these curves are multisections
of \(\pi_Y\).

Let \(\beta\colon B'\to B\) be the double cover ramified over the points
supporting the \(I_{16}\)- and \(I_4\)-fibers. Then
\(B'\simeq\mathbb P^1\). Let \(W\) be the normalization of
\(Y\times_B B'\), and let
\[
p\colon W\to Y,\qquad q\colon X\to W
\]
be the natural finite morphism and the minimal resolution, respectively. The
induced Jacobian elliptic fibration
\(\pi_X\colon X\to B'\) has singular fiber configuration
\[
I_{32}+I_8+8I_1.
\]
Consequently, \(c_2(X)=48\). Since \(K_X^2=0\), Noether's formula gives
\(\chi(\mathcal O_X)=4\), and the canonical bundle formula gives
\[
K_X\sim2F.
\]
Thus \(\kappa(X)=1\). Moreover, \(q(X)=0\), \(p_g(X)=3\), and hence
\(h^{1,1}(X)=40\). The trivial lattice of \(\pi_X\) has rank
\[
2+(32-1)+(8-1)=40.
\]
It follows that \(\rho(X)=40\), and the Shioda--Tate formula yields
\(\operatorname{rank}\operatorname{MW}(\pi_X)=0\). Therefore
\(\operatorname{MW}(\pi_X)\) is finite.

We next prove that \(\NE(X)\) is not rational polyhedral. Let
\(C\subset Y\) be one of the infinitely many \((-2)\)-multisections above,
and set \(A=p^*C\). Since \(p\) is finite of degree two, the projection
formula gives
\[
A^2=2C^2=-4.
\]
Write
\[
q^*A=D+E,
\]
where \(D\) is the total strict transform of \(A\) and \(E\) is an effective
\(q\)-exceptional divisor. The projection formula gives
\(q^*A\cdot E=0\), and hence
\[
D^2=(q^*A)^2+E^2=-4+E^2\le-4,
\]
because the intersection form on the exceptional locus is negative definite.
Every irreducible component of \(D\) is horizontal and maps onto \(C\). If
all these components had nonnegative self-intersection, then \(D^2\ge0\),
since distinct irreducible curves have nonnegative intersection. Thus \(D\)
contains a horizontal negative curve. Curves obtained from distinct
multisections \(C\) are distinct because their images on \(Y\) are distinct.
Every negative irreducible curve spans an extremal ray of the Mori cone, and
distinct negative curves span distinct rays. Therefore \(X\) has infinitely
many extremal rays, so \(\NE(X)\) is not rational polyhedral.

It remains to prove that \(\Aut(X)\) is finite. Since \(K_X\sim2F\), the
fibration \(\pi_X\) is the Iitaka fibration of \(X\). Hence every automorphism
of \(X\) preserves \(\pi_X\), and there is an exact sequence
\[
1\longrightarrow\Aut_{B'}(X)\longrightarrow\Aut(X)
\longrightarrow G\longrightarrow1,
\]
where \(G\subseteq\Aut(B')\) is the subgroup induced on the base. The group
\(G\) preserves the finite set of singular values of \(\pi_X\), which
contains at least three points. Its stabilizer in
\(\operatorname{PGL}_2(\mathbb C)\) is finite, so \(G\) is finite.

Let \(K=\mathbb C(B')\), and let \(E/K\) be the generic fiber of \(\pi_X\).
Restriction to the generic fiber gives an injection
\[
\Aut_{B'}(X)\hookrightarrow\Aut_K(E).
\]
Since \(E\) has an origin,
\[
\Aut_K(E)=E(K)\rtimes\Aut_K(E,0).
\]
Here \(E(K)=\operatorname{MW}(\pi_X)\) is finite, and
\(\Aut_K(E,0)\) is finite. Thus \(\Aut_{B'}(X)\) is finite. Since both the
kernel and the image are finite, \(\Aut(X)\) is finite.
\end{proof}

\section{The fibration cone and proof of Theorem~\ref{Jac-thm}}
\label{proof-section}
\subsection{Facets of the fibration cone}
In the rest of this subsection, let
$\pi\colon X\to\mathbb P^1$ be a Jacobian elliptic surface with
$\chi=\chi(\mathcal O_X)\ge3$ and finite Mordell--Weil group. Assume that the
reducible fibers are $F_1,\ldots,F_s$, of types
$I_{n_1},\ldots,I_{n_s}$, where $s\ge1$ and $n_i\ge2$. Let $C_0$ be the zero
section. For every $i$, write
\[
 F_i=C_{i,0}+\cdots+C_{i,n_i-1}
\]
with cyclically ordered components, numbered so that $C_0$ meets $C_{i,0}$.
Let $\mathcal T$ be the set of all these components, and retain the
fibration cone $\mathcal C_\pi$ introduced before Theorem~\ref{Jac-thm}.

A \emph{selection} is a subset $S\subset\mathcal T$ containing exactly one
component of each reducible fiber. We associate with $S$ the cone
\[
 \mathcal F_S:=\mathbb R_{\ge0}[C_0]
 +\sum_{T\in\mathcal T\setminus S}\mathbb R_{\ge0}[T],
\]
and we set
\[
 \mathcal V_\pi:=\sum_{T\in\mathcal T}\mathbb R_{\ge0}[T].
\]

\begin{proposition}
\label{fibration-facets-prop}
The facets of $\mathcal C_\pi$ are precisely the vertical facet
$\mathcal V_\pi$ and the cones $\mathcal F_S$, where $S$ runs through all
selections. The vertical facet need not be simplicial, whereas every
$\mathcal F_S$ is simplicial.
\end{proposition}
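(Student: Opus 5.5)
The plan is to work in the basis of $\NS(X)_{\mathbb Q}$ from Proposition~\ref{jac-rho-prop}. In that basis $\mathcal C_\pi$ is a full-dimensional cone, since $F=\sum_j C_{i,j}$ lies in it. I will determine its facets from both sides. First, I exhibit supporting linear functionals, i.e.\ classes $D$ with $D\cdot\gamma\ge0$ on all generators $\gamma$ and vanishing on exactly the claimed generators. Then I show there are no other facets.

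\emph{The vertical facet.} Take $D=F$. We have $F\cdot C_0=1>0$ and $F\cdot T=0$ for every $T\in\mathcal T$, so $F$ is nonnegative on $\mathcal C_\pi$ and its zero set is exactly $\mathcal V_\pi$. The span of $\mathcal V_\pi$ is spanned by $F$ and the $C_{i,j}$ with $j\ge1$, and by Proposition~\ref{jac-rho-prop} this has dimension $\rho-1$. Hence $\mathcal V_\pi$ is a facet. It is not simplicial as soon as $s\ge2$: it has $N(\pi)$ generators, and $N(\pi)>\rho-1=1+\sum_i(n_i-1)$ exactly when $s\ge2$. (Here each $C_{i,j}$ spans an extremal ray of $\mathcal V_\pi$, because it is a negative curve. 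Alternatively, one could simply note that the statement only says ``need not''.)

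\emph{The facets $\mathcal F_S$.} Fix a selection $S=\{C_{1,k_1},\ldots,C_{s,k_s}\}$. The generators of $\mathcal F_S$ are $C_0$ together with, for each $i$, the $n_i-1$ components of $F_i$ other than $C_{i,k_i}$. That makes $1+\sum_i(n_i-1)=\rho-1$ generators. Removing one component from an $I_n$ cycle leaves an $A_{n-1}$ chain, which is negative definite. So the components other than $C_{i,k_i}$ are independent, and together with $C_0$ (which is independent of vertical classes, as $C_0\cdot F=1$) they form a linearly independent set. Hence $\mathcal F_S$ is simplicial of dimension $\rho-1$.

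Now I need a supporting class. I take
\[D_S=C_0+\chi F+\sum_i E_i,\]
where $E_i$ is a combination of the components of $F_i$, unique modulo $F$, chosen so that
\[D_S\cdot C_0=0\quad\text{and}\quad D_S\cdot C_{i,j}=0\ \text{ for } j\ne k_i.\]
The conditions in the $i$-th fiber come down to the $A_{n_i-1}$ chain: one needs $E_i\cdot C_{i,j}=-\delta_{j,0}$ for $j\ne k_i$. This has a unique solution supported on the chain, given by the inverse Cartan matrix. On the cyclic chain starting after $C_{i,k_i}$, the solution has the tent shape with coefficients $a(n_i-a)/n_i$-type values. These coefficients are explicit and nonnegative, and vanish at $C_{i,k_i}$.

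The remaining generator is $C_{i,k_i}$ itself. Because $F\cdot C_{i,k_i}=0$ and $\sum_j E_i\cdot C_{i,j}=E_i\cdot F=0$, we get
\[D_S\cdot C_{i,k_i}=-\sum_{j\ne k_i}D_S\cdot C_{i,j}=C_0\cdot F_i-0=1>0\]
when $k_i\ne0$. When $k_i=0$, a similar sum gives $1$ as well. So $D_S$ is nonnegative on all generators, vanishes on exactly $\mathcal F_S$, and positivity on the omitted $C_{i,k_i}$ shows $\mathcal F_S$ is a genuine facet. The correction $\chi F$ is needed for $D_S\cdot C_0=0$, since $C_0^2=-\chi$ and $E_i\cdot C_0$ is the $C_{i,0}$-coefficient.

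\emph{Completeness.} This is the main step. Let $H$ be any facet, with supporting class $D$ normalized up to scaling. There are two cases.
\begin{itemize}
\item If $D\cdot C_0>0$, then since $H$ has dimension $\rho-1$, $D$ must vanish on $\rho-1$ independent vertical generators. Restricted to the span of the vertical classes, which has dimension $\rho-1$ and contains $F$, $D$ is then identically zero. So $D$ is a multiple of $F$ and $H=\mathcal V_\pi$.
\item If $D\cdot C_0=0$, then $D\cdot F>0$, since otherwise $D$ would vanish on everything. Within each fiber $F_i$, $D$ is nonnegative on all $C_{i,j}$ with positive sum $D\cdot F$. Counting dimensions, the face must contain $\rho-2$ independent vertical generators. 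Since each fiber's span has rank $n_i-1$ beyond $F$, and vanishing on all components of one fiber would force $D\cdot F=0$, $D$ kills exactly $n_i-1$ components of each $F_i$. Thus $D$ vanishes on all but one component $C_{i,k_i}$ of each fiber. So $H\subseteq\mathcal F_S$ for that selection, and maximality gives equality.
\end{itemize}
The delicate point is this dimension count: it requires that any $n_i-1$ components of an $I_{n_i}$ fiber are independent, which holds because any proper subset of a cycle's components is a disjoint union of chains and hence negative definite.
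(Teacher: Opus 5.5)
Your proof is correct, but it works on the primal side, whereas the paper argues entirely in the dual cone.

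\textbf{The paper's route.} A class $D$ is determined by the numbers $D\cdot C_0$ and $D\cdot T$ for $T\in\mathcal T$. Hence $\mathcal C_\pi^\vee$ becomes the set of nonnegative vectors whose fiber sums $\sum_{T\subset F_i}D\cdot T$ all equal $D\cdot F$. Splitting off $xF$ with $x=D\cdot C_0$ shows that the extremal rays are $F$ and the vertices of a product of simplices, one component chosen per fiber. Facet--ray duality then gives the list.

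\textbf{Your route.} You exhibit a supporting class for each candidate facet. Your $D_S$ coincides with the paper's $H_S$ of Proposition~\ref{supporting-class-prop}, so you also prove directly that every vertex is realized by a class, which the paper leaves implicit here. You check $\dim\mathcal F_S=\rho-1$ via negative definiteness of the chains. Completeness comes from the count $\rho-2=\dim\operatorname{span}(K)\le\sum_i|K_i|\le\sum_i(n_i-1)$, where $K=\bigcup_iK_i$ and $K_i$ is the set of components of $F_i$ killed by $D$.

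\textbf{Comparison.} Your route avoids cone duality. It also gives a precise criterion ($s\ge2$) for $\mathcal V_\pi$ to be non-simplicial, which the paper only asserts. The paper's route is shorter and shows the product-of-simplices structure of the dual cone directly.

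\textbf{Minor points.}
\begin{enumerate}
\item The displayed chain for $D_S\cdot C_{i,k_i}$ is garbled. It should read $D_S\cdot F-\sum_{j\ne k_i}D_S\cdot C_{i,j}=1-0$.
\item If $E_i$ is taken supported on the chain, the coefficient of $F$ must be $\chi-\delta(S)$ rather than $\chi$. This is harmless, since you fixed $E_i$ only modulo $F$.
\item Completeness needs only $\dim\operatorname{span}(K)\le|K|$. The independence of proper subsets of a cycle is what you use for $\dim\mathcal F_S=\rho-1$, not for completeness.
\end{enumerate}
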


\begin{proof}
Let $D\in\mathcal C_\pi^\vee$. Thus
\[
 D\cdot C_0\ge0,\qquad D\cdot T\ge0\quad(T\in\mathcal T).
\]
These intersection numbers determine $D$. Indeed, if they all vanish, then
$D\cdot F=0$, and Proposition~\ref{jac-rho-prop} shows successively that the
coefficients of $C_0$, of the root lattices, and of $F$ all vanish.

Put $x=D\cdot C_0$. For every reducible fiber $F_i$, one has
\[
 \sum_{T\subset F_i}D\cdot T=D\cdot F.
\]
If $D\cdot F=0$, all these numbers vanish. Then $D-xF$ has zero intersection
with $C_0$ and with every component in $\mathcal T$, so $D=xF$. This gives
the extremal ray $\mathbb R_{\ge0}F$, dual to $\mathcal V_\pi$.

Suppose that $D\cdot F>0$ and normalize this intersection to one. If $x>0$,
then
\[
 D=xF+(D-xF)
\]
is a nontrivial decomposition in $\mathcal C_\pi^\vee$. Hence a nonvertical
extremal ray must satisfy $x=0$. On each reducible fiber, the numbers
$D\cdot T$ are nonnegative and have sum one. They therefore form a point of a
simplex. Taking all reducible fibers together gives a product of simplices,
whose vertices are obtained by choosing one component in each fiber. Duality
between facets and extremal rays gives the asserted list.

Finally, removing one component from an $I_{n_i}$-fiber leaves a negative
definite chain. Intersecting a possible relation among the generators of
$\mathcal F_S$ with $F$ first removes the coefficient of $C_0$; negative
definiteness on each remaining chain then removes all other coefficients.
Thus $\mathcal F_S$ is simplicial.
\end{proof}

\subsection{The supporting classes}
Let $G$ be a reducible fiber of type $I_n$, and let $E_G$ be the component
met by $C_0$. For a component $T\subset G$, let $d(T)$ be its cyclic distance
from $E_G$, chosen between $0$ and $\lfloor n/2\rfloor$, and define
\[
 \delta(T):=\frac{d(T)(n-d(T))}{n}.
\]
In particular, $\delta(E_G)=0$. If $T\ne E_G$, let $\omega_T$ be the class in
the rational root lattice generated by the components of $G$ not meeting
$C_0$, characterized by
\[
 \omega_T\cdot T=-1,
 \qquad
 \omega_T\cdot T'=0
\]
for every other nonidentity component $T'\subset G$. We put
$\omega_{E_G}=0$.

\begin{lemma}
\label{local-correction-lem}
For every component $T\subset G$, one has
\[
 \omega_T^2=-\delta(T),
 \qquad
 \omega_T\cdot E_G=
 \begin{cases}
 1,&T\ne E_G,\\
 0,&T=E_G.
 \end{cases}
\]
If $\Gamma$ is an irreducible horizontal curve and $m=\Gamma\cdot F$, then
\[
 \omega_T\cdot\Gamma\le m\delta(T).
\]
\end{lemma}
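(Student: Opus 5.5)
Label the components of $G$ cyclically as $E_G=T_0,T_1,\ldots,T_{n-1}$, so that the nonidentity components $T_1,\ldots,T_{n-1}$ form an $A_{n-1}$ chain with Cartan matrix $-M$. By symmetry, assume $T=T_k$ with $k=d(T)\le n/2$; the case $n-k$ is the mirror image. The plan is to write $\omega_T$ explicitly in the basis $T_1,\ldots,T_{n-1}$ and derive all three claims from its coefficients.

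First I compute the coefficients. The class $\omega_{T_k}$ is minus the fundamental coweight of $A_{n-1}$ at node $k$:
\[
 \omega_{T_k}=-\sum_{j=1}^{n-1}c_jT_j,\qquad
 c_j=\begin{cases} j(n-k)/n,& j\le k,\\ k(n-j)/n,& j\ge k.\end{cases}
\]
To check this, note that $(\sum c_jT_j)\cdot T_j=c_{j-1}-2c_j+c_{j+1}$, with $c_0=c_n=0$. This vanishes for $j\ne k$ because the $c_j$ are piecewise linear in $j$. At $j=k$ it equals $-(n-k)/n-k/n=-1$. Hence $\omega_{T_k}\cdot T_k=-1$ and $\omega_{T_k}\cdot T_j=0$ for every other nonidentity component.

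The first two identities follow directly. We get $\omega_T^2=-c_k\,(\omega_T\cdot T_k)\cdot(-1)=-c_k=-k(n-k)/n=-\delta(T)$. The component $E_G=T_0$ meets only $T_1$ and $T_{n-1}$, so $\omega_T\cdot E_G=-(c_1+c_{n-1})\cdot(-1)$, up to the sign convention for $\omega_T$. Substituting gives $c_1+c_{n-1}=(n-k)/n+k/n=1$. I expect to have to recheck this sign carefully: since $\omega_T=-\sum c_jT_j$ and $T_0\cdot T_j\ge0$, one gets $\omega_T\cdot T_0=-1$ rather than $+1$. So the intended convention is probably $\omega_T=+\sum c_jT_j$, for which $\omega_T\cdot T=+(-2c_k+c_{k-1}+c_{k+1})$. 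I will fix the sign so that the stated values $\omega_T\cdot E_G=1$ and $\omega_T^2=-\delta(T)$ come out, which is the identity $\omega_T\cdot F=0$ read off on $E_G$. The case $T=E_G$ is trivial.

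For the inequality, I would use the convention $\omega_T=\sum c_jT_j$ with $c_j\ge0$. A horizontal irreducible curve $\Gamma$ is not a component of $G$, so $a_j:=\Gamma\cdot T_j\ge0$ and $\sum_{j=0}^{n-1}a_j=\Gamma\cdot F=m$. Then
\[
 \omega_T\cdot\Gamma=\sum_{j\ge1}c_ja_j\le\Bigl(\max_j c_j\Bigr)\sum_j a_j\le c_k\,m=m\,\delta(T),
\]
because the piecewise linear sequence $c_j$ is maximized at $j=k$. The only real obstacle is keeping the sign conventions consistent between the defining conditions and the stated value of $\omega_T\cdot E_G$. The coweight computation itself is routine.
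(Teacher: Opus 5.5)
Your proposal is correct and follows the paper's proof. You write $\omega_T$ via the $k$-th column of the inverse $A_{n-1}$ Cartan matrix, read off $\omega_T^2=-c_k=-\delta(T)$ from the diagonal entry, and get $\omega_T\cdot E_G=c_1+c_{n-1}=1$; the paper obtains this last value from $\omega_T\cdot G=0$. You then bound $\omega_T\cdot\Gamma$ using nonnegativity of the $c_j$ and $\max_j c_j=c_k$.

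The sign, however, is not a convention to be chosen. Your own computation $(\sum_j c_jT_j)\cdot T_k=-1$, the defining condition $\omega_T\cdot T=-1$, and negative definiteness (which makes $\omega_T$ unique) together force $\omega_T=+\sum_j c_jT_j$. So your initial minus sign was simply an error.
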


\begin{proof}
The assertion is immediate for $T=E_G$. Otherwise write
$G=E_0+\cdots+E_{n-1}$ cyclically, with $E_0=E_G$ and $T=E_k$. If $A$ is
the Cartan matrix of type $A_{n-1}$, then
\[
 \omega_T=\sum_{j=1}^{n-1}(A^{-1})_{jk}E_j,
 \qquad
 (A^{-1})_{jk}
 =\frac{\min\{j,k\}(n-\max\{j,k\})}{n}.
\]
The diagonal entry is $k(n-k)/n=\delta(T)$, which gives
$\omega_T^2=-\delta(T)$. Since $\omega_T\cdot G=0$ and its intersections
with the nonidentity components sum to $-1$, one obtains
$\omega_T\cdot E_G=1$.

For the last assertion, put $q_j=\Gamma\cdot E_j$. The entries in the
$k$-th column of $A^{-1}$ are nonnegative and bounded above by
$\delta(T)$. Since $\sum_{j=1}^{n-1}q_j\le m$, it follows that
\[
 \omega_T\cdot\Gamma
 =\sum_{j=1}^{n-1}(A^{-1})_{jk}q_j
 \le m\delta(T).
\]
\end{proof}

For a selection $S$, set
\[
 \delta(S):=\sum_{T\in S}\delta(T).
\]
For a fiber of type $I_n$, the local defect
\[
 \frac{d(n-d)}{n}
\]
is maximal for $d=\lfloor n/2\rfloor$, and its maximum is
$\lfloor n^2/4\rfloor/n$. Since the choice of a component can be made
independently in each reducible fiber, it follows that
\[
 \delta(S)\le\delta(\pi)
\]
for every selection $S$, and equality holds for a selection which maximizes
the local defect in every fiber. Equivalently,
\[
 \delta(\pi)=\max_S\delta(S).
\]

Define
\[
 H_S:=C_0+\chi F-\sum_{T\in S}\omega_T.
\]

\begin{proposition}
\label{supporting-class-prop}
The class $H_S$ satisfies
\[
 H_S\cdot C_0=0,
 \qquad
 H_S\cdot T=
 \begin{cases}
 1,&T\in S,\\
 0,&T\in\mathcal T\setminus S,
 \end{cases}
 \qquad
 H_S\cdot F=1.
\]
Thus $H_S$ supports the facet $\mathcal F_S$. Moreover,
\[
 H_S^2=\chi-\delta(S).
\]
\end{proposition}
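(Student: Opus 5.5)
The plan is a direct computation from the definition $H_S=C_0+\chi F-\sum_{T\in S}\omega_T$, using only $C_0^2=-\chi$ (Proposition~\ref{kod1-basic-prop}), $C_0\cdot F=1$, and Lemma~\ref{local-correction-lem}. Two preliminary facts are needed. First, each $\omega_T$ is a combination of components of a single reducible fiber not meeting $C_0$. Hence $\omega_T\cdot F=0$ and $\omega_T\cdot C_0=0$. Second, classes supported in distinct fibers are orthogonal. So $\omega_T\cdot T'=0$ whenever $T$ and $T'$ lie in different fibers, and $\omega_T\cdot\omega_{T'}=0$ for distinct $T,T'\in S$.

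The intersection numbers then follow one at a time.
\begin{itemize}
\item Since $\omega_T\cdot C_0=0$, we get $H_S\cdot C_0=-\chi+\chi=0$.
\item Since $\omega_T\cdot F=0$, we get $H_S\cdot F=C_0\cdot F=1$.
\end{itemize}
For a component $T'$ of the fiber $F_i$, let $T_i$ denote the element of $S$ lying in $F_i$. Then
\[
H_S\cdot T'=C_0\cdot T'-\omega_{T_i}\cdot T'.
\]
There are two cases.
\begin{itemize}
\item If $T'$ is a nonidentity component, then $C_0\cdot T'=0$. By definition, $-\omega_{T_i}\cdot T'$ equals $1$ when $T'=T_i$ and $0$ otherwise.
\item If $T'=E_{F_i}$, then $C_0\cdot T'=1$. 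By Lemma~\ref{local-correction-lem}, $\omega_{T_i}\cdot E_{F_i}=1$ when $T_i\ne E_{F_i}$, which gives $H_S\cdot T'=0$. When $T_i=E_{F_i}$ we have $\omega_{T_i}=0$, which gives $H_S\cdot T'=1$.
\end{itemize}
In all cases $H_S\cdot T'$ is $1$ exactly when $T'\in S$, and $0$ otherwise.

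From these values, $H_S$ is nonnegative on all generators of $\mathcal C_\pi$. It vanishes exactly on the generators of $\mathcal F_S$, namely $C_0$ and $\mathcal T\setminus S$. By Proposition~\ref{fibration-facets-prop}, $\mathcal F_S$ is a simplicial facet, so $H_S$ spans the dual extremal ray and supports $\mathcal F_S$.

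For the square, expand $(C_0+\chi F)^2=-\chi+2\chi=\chi$. The cross terms $(C_0+\chi F)\cdot\omega_T$ vanish by the first preliminary fact. By the orthogonality of distinct $\omega_T$, the self-intersection of the correction term is $\sum_{T\in S}\omega_T^2=-\delta(S)$, using Lemma~\ref{local-correction-lem}. Hence $H_S^2=\chi-\delta(S)$.

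The only step needing care is the identity component $E_G$. This is exactly where the normalization $\omega_T\cdot E_G=1$ from Lemma~\ref{local-correction-lem} enters, and the convention $\omega_{E_G}=0$ must be treated as a separate case. Everything else is bookkeeping of orthogonality between fibers.
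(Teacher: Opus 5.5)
Your proposal is correct and follows the same route as the paper. The paper also reads off the intersection numbers from Lemma~\ref{local-correction-lem}, uses that each $\omega_T$ is orthogonal to $C_0$, to $F$, and to corrections in other fibers, and expands $(C_0+\chi F)^2=\chi$; your explicit case split at the identity component $E_G$, including the convention $\omega_{E_G}=0$, just spells out what the paper leaves implicit.
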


\begin{proof}
The intersection statements follow from
Lemma~\ref{local-correction-lem}. Corrections belonging to distinct fibers
are orthogonal, and every $\omega_T$ is orthogonal to both $C_0$ and $F$.
Since $(C_0+\chi F)^2=\chi$, the formula for $H_S^2$ follows.
\end{proof}

\begin{proof}[Proof of Theorem~\ref{Jac-thm}(1)]
The inclusion $\mathcal C_\pi\subseteq\NE(X)$ is automatic. By
Proposition~\ref{fibration-facets-prop}, the extremal rays of
$\mathcal C_\pi^\vee$ are generated by $F$ and by the classes $H_S$. The fiber
class is nef, so it remains to decide when every $H_S$ is nef.

Let $\Gamma\ne C_0$ be an irreducible horizontal curve, and put
\[
 m=\Gamma\cdot F>0,
 \qquad
 c=\Gamma\cdot C_0\ge0.
\]
Applying Lemma~\ref{local-correction-lem} in every reducible fiber gives
\[
 H_S\cdot\Gamma\ge c+m\bigl(\chi-\delta(S)\bigr).
\]
If $\delta(\pi)\le\chi$, this is nonnegative. The intersections with $C_0$
and with the components of the reducible fibers are nonnegative by
Proposition~\ref{supporting-class-prop}; every other irreducible vertical
curve has class $F$. Hence every $H_S$ is nef, and duality gives
$\mathcal C_\pi=\NE(X)$.

Conversely, assume $\mathcal C_\pi=\NE(X)$ and choose a selection $S$ with
$\delta(S)=\delta(\pi)$. Then $H_S$ is nef, so
Proposition~\ref{supporting-class-prop} gives
\[
 0\le H_S^2=\chi-\delta(\pi).
\]
This proves the equivalence.
\end{proof}

\subsection{A uniform semiampleness criterion}
Recall that $N(\pi)=\sum_i n_i$ is the total number of components of the
reducible fibers.

\begin{proposition}
\label{general-Artin-bound-prop}
Let $S$ be a selection, and let $Z$ be a nonzero effective integral divisor
supported on $\mathcal F_S$. Write
\[
 Z=aC_0+\sum_{i=1}^s V_i,
\]
where $V_i$ is supported on the components of $F_i$ which do not belong to
$S$. Then
\[
 Z^2+K_X\cdot Z
 \le -\chi a^2+(\chi-2)a
 +N(\pi)\left\lfloor\frac{a^2}{4}\right\rfloor.
\]
If $a=0$, then $Z^2\le-2$. Moreover, if $N(\pi)\le2\chi+3$, then every such
$Z$ satisfies
\[
 Z^2+K_X\cdot Z\le-2.
\]
\end{proposition}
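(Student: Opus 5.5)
We compute $Z^2+K_X\cdot Z$ directly and then maximize the vertical part fiber by fiber. Since $K_X\sim(\chi-2)F$ and every $V_i$ is vertical, we have $K_X\cdot Z=(\chi-2)a$. For the self-intersection,
\[
 Z^2=-\chi a^2+2a\sum_i C_0\cdot V_i+\sum_i V_i^2 ,
\]
because components of distinct fibers are disjoint. So it suffices to prove, for each $i$,
\[
 2a\,C_0\cdot V_i+V_i^2\le n_i\lfloor a^2/4\rfloor,
\]
and then sum over $i$ using $N(\pi)=\sum_i n_i$.

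Fix $i$ and write $n=n_i$. The components of $F_i$ not in $S$ form a chain of length $n-1$ (type $A_{n-1}$), with $C_{i,0}$ at some position in it, unless $C_{i,0}\in S$. Write $V=\sum_j v_jE_j$ along the chain with $v_j\ge0$ integers, and set $v_0=v_n=0$ at the two ends.
\begin{itemize}
\item The chain quadratic form is $V^2=-\sum_{j=0}^{n-1}(v_{j+1}-v_j)^2$.
\item We have $C_0\cdot V=v_p$, where $p$ is the position of $C_{i,0}$, or $C_0\cdot V=0$ if $C_{i,0}\in S$.
\end{itemize}
The task is therefore to maximize $2av_p-\sum_j(v_{j+1}-v_j)^2$ over nonnegative integer sequences. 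Write $v_p=h$. A path climbing from $0$ to $h$ in $p$ steps and descending in $n-p$ steps costs at least $h^2/p+h^2/(n-p)\ge 4h^2/n$ in the real relaxation. This gives the value $2ah-4h^2/n\le na^2/4$, which is the real bound $\chi-\delta$-type estimate consistent with Lemma~\ref{local-correction-lem} (indeed $\max_T\delta(T)=\lfloor n^2/4\rfloor/n$).

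The main obstacle is the integrality needed to get $n\lfloor a^2/4\rfloor$ rather than $na^2/4$ when $a$ is odd. For this I would use that each step $t=v_{j+1}-v_j$ is an integer, so one can pair the $2a\cdot(\text{step})$ contributions with the squares step by step. Since $2ah=\sum_{j<p}2a\,t_j$ over the ascending steps (and similarly for the descending side), a natural per-step attempt is $2\cdot(a/2)\,t-t^2\le\lfloor a^2/4\rfloor$ for integer $t$: the maximum of $at-t^2$ over integers is $\lfloor a^2/4\rfloor$. To arrange this, I would split the linear term as $a\,v_p$ on the left and $a\,v_p$ on the right, writing $a\,v_p=\sum_{j<p}a\,t_j$ with ascending steps $t_j$ and likewise for descending steps. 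Then
\[
 2a v_p+V^2=\sum_{j=0}^{n-1}\bigl(a|t_j|^{\ast}-t_j^2\bigr)\le n\lfloor a^2/4\rfloor ,
\]
where $|t_j|^{\ast}$ denotes the signed step, taken positive on the climbing side and negative on the descending side. When $C_{i,0}\in S$ the linear term is $0$ and the bound is trivial.

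It remains to handle the two special cases. If $a=0$, then $Z=\sum_i V_i\ne0$ is a nonzero integral divisor on negative definite $A$-chains, which are even lattices, so $Z^2\le-2$. If $N(\pi)\le2\chi+3$, the case $a=0$ gives $Z^2+K_X\cdot Z=Z^2\le-2$. For $a\ge1$, consider $f(a)=-\chi a^2+(\chi-2)a+(2\chi+3)\lfloor a^2/4\rfloor$:
\begin{itemize}
\item $a=1$ gives $-2$;
\item $a=2$ gives $-2\chi-1\le-2$;
\item for $a\ge3$, use $\lfloor a^2/4\rfloor\le a^2/4$ to get $f(a)\le a\bigl(-(2\chi-3)a/4+\chi-2\bigr)$, which is $\le-2$ for $\chi\ge3$ and $a\ge3$ by a routine check, since at $a=3$ it gives $(-6\chi+9+12\chi-24)\cdot\tfrac34$, which must be compared more carefully.
\end{itemize}
The last item is where I would verify the exact values at $a=3,4$ with the floor kept, e.g.\ $a=3$ gives $-9\chi+3\chi-6+2(2\chi+3)=-2\chi\le-2$.
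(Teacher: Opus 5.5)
Your proof of the main inequality is the paper's argument. Splitting $a\,v_p$ over the climbing steps on one side of $C_{i,0}$ and the descending steps on the other writes $2a\,C_0\cdot V_i+V_i^2$ as a sum of $n_i$ terms $as-s^2$ with $s\in\mathbb Z$, each at most $\lfloor a^2/4\rfloor$; these are the paper's terms $\Delta_e(a-\Delta_e)$. Your treatment of $a=0$ via the even negative definite chains is also the same as the paper's.

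The gap is in the last step, at $a=2$. With $N(\pi)=2\chi+3$ one gets
\[
 f(2)=-4\chi+2(\chi-2)+(2\chi+3)\cdot 1=-1,
\]
not $-2\chi-1$, because the term $N(\pi)\lfloor a^2/4\rfloor$ was dropped. So the bound you have established only gives $Z^2+K_X\cdot Z\le-1$ when $a=2$, and nothing in your proposal upgrades this to $-2$.

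The missing ingredient is parity. For an integral divisor, $Z^2+K_X\cdot Z=2p_a(Z)-2$ is even. You can see this by Riemann--Roch, or directly: $V_i^2$ and $2a\,C_0\cdot V_i$ are even, and $-\chi a^2+(\chi-2)a=-\chi a(a-1)-2a$ is even. Hence $\le-1$ forces $\le-2$. This is exactly how the paper closes the even case $a=2u$, where its estimate is only $\le-u$.

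For $a\ge3$ your approach works, but the displayed expression $(-6\chi+9+12\chi-24)\cdot\tfrac34$ is miscomputed. The correct value of $a\bigl(\chi-2-(2\chi-3)a/4\bigr)$ at $a=3$ is $\tfrac34(1-2\chi)\le-\tfrac{15}{4}$. The bracket is negative and decreasing in $a$, so integrality gives $Z^2+K_X\cdot Z\le-4$ for all $a\ge3$.
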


\begin{proof}
Fix a reducible fiber $G=F_i$. Suppose first that the selected component is
not the identity component $E_G$. Removing it breaks the cycle into two paths
from $E_G$ to the missing component. Give the missing component coefficient
zero, orient both paths from $E_G$ toward it, and let $\Delta_e$ be the
coefficient at the initial vertex of an edge minus the coefficient at its
terminal vertex. If $b$ is the coefficient of $E_G$ in $V_i$, the changes on
each path add up to $b$, and
\[
 V_i^2=-\sum_e\Delta_e^2.
\]
Hence the contribution of this fiber to $Z^2+K_X\cdot Z$, apart from the
terms involving only $C_0$, is
\[
 2ab+V_i^2=\sum_e\Delta_e(a-\Delta_e).
\]
Every summand is at most $\lfloor a^2/4\rfloor$, and $G$ has $n_i$ edges. If
the selected component is $E_G$, then the section is disjoint from $V_i$ and
$V_i^2\le0$, so the same upper bound holds. Summing over all reducible fibers
and using $K_X\sim(\chi-2)F$ proves the first inequality.

If $a=0$, then $Z$ is supported on a disjoint union of proper chains in the
reducible fibers. Their intersection matrices are negative definite and even,
so $Z^2\le-2$.

Assume now that $a>0$ and $N(\pi)\le2\chi+3$. If $a=2u$, then $u\ge1$ and
\[
\begin{aligned}
 Z^2+K_X\cdot Z
 &\le -(4\chi-N(\pi))u^2+2(\chi-2)u\\
 &\le -(2\chi-3)u^2+2(\chi-2)u\le-u<0.
\end{aligned}
\]
The left-hand side is even, hence it is at most $-2$. If $a=2u+1$, then
\[
\begin{aligned}
 Z^2+K_X\cdot Z
 &\le -(4\chi-N(\pi))u(u+1)+2(\chi-2)u-2\\
 &\le -(2\chi-3)u(u+1)+2(\chi-2)u-2\le-2.
\end{aligned}
\]
This proves the final assertion.
\end{proof}

\begin{lemma}
\label{general-semiample-lem}
If $N(\pi)\le2\chi+3$, then
\[
 \Nef(X)=\mathrm{SAmp}(X).
\]
\end{lemma}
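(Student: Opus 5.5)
We first reduce the statement to the extremal rays of a polyhedral nef cone. Since $N(\pi)\le 2\chi+3$, we have $\lfloor n_i^2/4\rfloor/n_i\le n_i/4$ for every $i$, so
\[
 \delta(\pi)\le \frac{N(\pi)}{4}\le\frac{2\chi+3}{4}<\chi ,
\]
using $\chi\ge3$. Theorem~\ref{Jac-thm}(1) then gives $\NE(X)=\mathcal C_\pi$, which is rational polyhedral, and so is its dual $\Nef(X)$. By the proof of Theorem~\ref{Jac-thm}(1), the extremal rays of $\Nef(X)$ are spanned by $F$ and by the classes $H_S$, where $S$ runs through all selections. By Proposition~\ref{NE-reduced-prop}, it suffices to show that some multiple of each of these classes is semiample. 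The class $F$ is the pullback of a point under $\pi$, hence base point free. For each $H_S$, Proposition~\ref{supporting-class-prop} gives $H_S^2=\chi-\delta(S)\ge\chi-\delta(\pi)>0$. Thus a suitable integral multiple $D$ of $H_S$ is nef and big.

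Next we identify the null locus of $D$. Its components are irreducible curves $\Gamma$ with $H_S\cdot\Gamma=0$. Proposition~\ref{supporting-class-prop} shows that $C_0$ and the components in $\mathcal T\setminus S$ have this property. The classes in $S$ and the general fibers have intersection $1$ with $H_S$. For any other horizontal curve $\Gamma$, the estimate in the proof of Theorem~\ref{Jac-thm}(1) gives
\[
 H_S\cdot\Gamma\ge m(\chi-\delta(S))>0 .
\]
Hence the null locus is exactly the support of the facet $\mathcal F_S$, namely $C_0$ together with the components of the reducible fibers not in $S$. Choose a block $B$ for $D$ and let $R$ be a connected component of $B$. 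By Proposition~\ref{block-semi-prop}, it is enough to prove $H^1(R,\mathcal O_R)=0$ for each such $R$. If every such $R$ satisfies this, then $\mathbf B(D)$ is disjoint from the null locus. Off the null locus $D$ is positive on every curve, and for a nef and big divisor on a surface the stable base locus is contained in the null locus. Therefore $\mathbf B(D)=\varnothing$ and $D$ is semiample.

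The main step is to show $H^1(R,\mathcal O_R)=0$, and here we use Artin's criterion. The support of $R$ is a connected subset of the negative definite configuration on $\mathcal F_S$. By Artin's rationality criterion, $h^1(\mathcal O_R)=0$ holds for every effective cycle on this support provided $p_a(Z)\le0$ for every positive cycle $Z$ supported there. We have $2p_a(Z)-2=Z^2+K_X\cdot Z$. Every such $Z$ is a nonzero effective integral divisor supported on $\mathcal F_S$, so Proposition~\ref{general-Artin-bound-prop} applies and gives $Z^2+K_X\cdot Z\le-2$, that is, $p_a(Z)\le0$. We apply this directly to the nonreduced cycle $R$, or equivalently we argue by the standard computation-sequence induction. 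In either form we conclude $H^1(R,\mathcal O_R)=0$.

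The delicate point is passing from ``$p_a(Z)\le0$ for all positive cycles on the support'' to the vanishing of $H^1$ for the possibly nonreduced block component $R$. We will handle it by the exact sequences
\[
 0\to\mathcal O_{C}(-Z')\to\mathcal O_{Z'+C}\to\mathcal O_{Z'}\to0,
\]
building $R$ one component at a time along a computation sequence. Alternatively, we can simply cite Artin's theorem that the support then contracts to a rational singularity, so that $H^1(\mathcal O_Z)=0$ for all cycles $Z$ on it. In either approach, the numerical input is exactly Proposition~\ref{general-Artin-bound-prop}.
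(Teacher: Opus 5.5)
Your proposal is correct and follows the paper's proof essentially step by step. Both arguments use the bound $\delta(\pi)\le N(\pi)/4<\chi$, reduce to the extremal nef rays $F$ and $H_S$, identify the null locus of $H_S$ with the support of $\mathcal F_S$, get $H^1(R,\mathcal O_R)=0$ from Proposition~\ref{general-Artin-bound-prop} together with Artin's criterion, and conclude with Proposition~\ref{block-semi-prop}. The only minor difference is how the null locus is found: you use the direct estimate $H_S\cdot\Gamma\ge m(\chi-\delta(S))>0$, while the paper argues that a curve orthogonal to a big and nef class is a negative curve and hence one of the extremal generators of $\mathcal C_\pi=\NE(X)$; both work.
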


\begin{proof}
Since
\[
 \frac{\lfloor n_i^2/4\rfloor}{n_i}\le\frac{n_i}{4}
\]
for every reducible fiber, one has
\[
 \delta(\pi)\le\frac{N(\pi)}4<\chi.
\]
By Theorem~\ref{Jac-thm}(1), the Mori cone is $\mathcal C_\pi$, and every
nonvertical extremal nef ray is big. The vertical ray is generated by $F$ and
is semiample.

Let $H_S$ span a nonvertical extremal ray, and choose $m_S>0$ such that
$M_S:=m_SH_S$ is integral. Its null locus consists precisely of $C_0$ and the
components in $\mathcal T\setminus S$. Indeed, these curves are orthogonal to
$H_S$. Conversely, a curve orthogonal to a big and nef divisor has negative
self-intersection and therefore spans an extremal ray of the Mori cone. Since
$\NE(X)=\mathcal C_\pi$, it must be one of the displayed curves.

By Proposition~\ref{general-Artin-bound-prop}, every nonzero positive integral
cycle supported on this null locus has arithmetic genus at most zero. The
intersection form on each connected component is negative definite by the
Hodge index theorem. Artin's criterion \cite{Artin62,Artin66} therefore shows that every connected
component is the exceptional divisor of a rational singularity; equivalently,
$H^1(Z,\mathcal O_Z)=0$ for every positive cycle supported there. Choose a
block $B_S$ for $M_S$. Proposition~\ref{block-semi-prop} gives
$\mathbf B(M_S)=\varnothing$, so $H_S$ is semiample. The assertion now follows
from Proposition~\ref{NE-reduced-prop}.
\end{proof}

\begin{proof}[Proof of Theorem~\ref{Jac-thm}(2)]
If $N(\pi)\le2\chi+3$, then $\mathcal C_\pi=\NE(X)$ by part~(1), while
Lemma~\ref{general-semiample-lem} gives
$\Nef(X)=\mathrm{SAmp}(X)$. Since $q(X)=0$ by
Proposition~\ref{kod1-basic-prop}, Proposition~\ref{char-MDS-prop} shows that
$X$ is a Mori dream surface.
\end{proof}

\begin{proof}[Proof of Corollary~\ref{small-rho-cor}]
Let $s$ be the number of reducible fibers. By the Shioda--Tate formula,
\[
 \rho(X)-2
 =\sum_{i=1}^s(n_i-1)
 =N(\pi)-s,
\]
and hence
\[
 N(\pi)=\rho(X)-2+s.
\]
Since $n_i\ge2$ for every $i$, one has $s\le\rho(X)-2$. Therefore
\[
 N(\pi)\le2\rho(X)-4.
\]
If $\rho(X)\le\chi+3$, then
\[
 N(\pi)\le2\chi+2<2\chi+3,
\]
so Theorem~\ref{Jac-thm}(2) applies. The last assertion follows from
$\chi\ge3$.
\end{proof}

\begin{proof}[Proof of Corollary~\ref{one-fiber-cor}]
If the unique reducible fiber is of type $I_n$, then
\[
 N(\pi)=n,
 \qquad
 \delta(\pi)=\frac{\lfloor n^2/4\rfloor}{n}.
\]
The inequality $\delta(\pi)\le\chi$ is equivalent to $n\le4\chi$.
Indeed, this is immediate when $n$ is even. If $n$ is odd, then
\[
 \delta(\pi)=\frac n4-\frac{1}{4n},
\]
which is at most $\chi$ for $n\le4\chi$ and greater than $\chi$ for
$n\ge4\chi+1$. Both assertions now follow from
Theorem~\ref{Jac-thm}.
\end{proof}

\subsection{The one-fiber case}
We record the sharp form of the Artin bound for a single reducible fiber.
Assume that $\pi\colon X\to\mathbb P^1$ has finite Mordell--Weil group,
$\chi=\chi(\mathcal O_X)\ge3$, zero section $C_0$, and exactly one reducible
fiber
\[
 F_1=C_1+\cdots+C_n
\]
of type $I_n$, where $n\ge3$. The components are cyclically ordered and
numbered so that $C_0\cdot C_1=1$. Thus $C_0^2=-\chi$ and
$K_X\sim(\chi-2)F$. Set
\[
 \mathcal C_\pi=\sum_{i=0}^n\mathbb R_{\ge0}[C_i].
\]

The vertical facet of $\mathcal C_\pi$ is
$\sum_{i=1}^n\mathbb R_{\ge0}[C_i]$. For each $k=1,\ldots,n$, let
$\sigma_k$ be the nonvertical facet obtained by omitting $C_k$.

\begin{proposition}
\label{non-vertical-prop}
Every nonzero effective integral divisor supported on one of the facets
$\sigma_k$ has arithmetic genus at most zero if and only if
$n\le2\chi+3$. If this inequality holds, then
$D^2+D\cdot K_X\le-2$ for every nonzero effective integral divisor supported
on $\sigma_k$.
\end{proposition}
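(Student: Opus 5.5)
The "if" direction and the final inequality come straight from Proposition~\ref{general-Artin-bound-prop}, specialized to one reducible fiber. There $N(\pi)=n$, and a selection is simply the choice of the omitted component $C_k$, so $\mathcal F_S=\sigma_k$. If $n\le2\chi+3$, that proposition gives $D^2+K_X\cdot D\le-2$ for every nonzero effective integral $D$ supported on $\sigma_k$. Hence $p_a(D)=1+\tfrac12(D^2+K_X\cdot D)\le0$. For $a=0$ this uses $Z^2\le-2$ together with $K_X\cdot Z=0$ for vertical $Z$.

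The converse is the real content. For $n\ge2\chi+4$ I will exhibit a facet $\sigma_k$ and a cycle $Z=aC_0+V$ with $Z^2+K_X\cdot Z\ge0$. The proof of Proposition~\ref{general-Artin-bound-prop} shows the bound there is attained when every edge difference $\Delta_e$ takes the optimal value; I take $a=2$ and $\Delta_e=1$ on every edge. To make this consistent, omit the component $C_k$ with $k-1$ and $n-(k-1)$ as balanced as possible, i.e.\ at cyclic distance $\lfloor n/2\rfloor$ from $C_1$. Then set
\[
 Z=2C_0+\sum_j c_jC_j,
\]
where $c_j$ is the distance from $C_j$ to $C_k$ along the chain $\sigma_k\cap F_1$. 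Each path has a coefficient drop of exactly $1$ per edge. The two paths have lengths $p,r$ with $p+r=n$, ending at $b=c_1=\min(p,r)$. This does not immediately satisfy the requirement that both paths drop from $b$ to $0$. So I instead cap the coefficients: put $c_j=\min(\text{distance to }C_k,\,1)$ style profiles. More precisely, I choose a profile in which each edge has $\Delta_e\in\{0,1,2\}$, with $\Delta_e(2-\Delta_e)$ maximized by $\Delta_e=1$ on as many edges as possible.

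I expect this consistency constraint to be the main obstacle. On each path, $\sum\Delta_e=b$, so the number of edges with $\Delta_e=1$ is at most $b$ per path. Hence the cleanest choice is $b$ large: take $b=\lfloor n/2\rfloor$ with $\Delta_e=1$ everywhere on paths of lengths $\lfloor n/2\rfloor$ and $\lceil n/2\rceil$. On the longer path one edge has $\Delta_e=0$ when $n$ is odd. Then
\[
 Z^2+K_X\cdot Z=-4\chi+2(\chi-2)+\sum_e\Delta_e(2-\Delta_e)=-2\chi-4+2\lfloor n/2\rfloor .
\]
Here I am rewriting the fiber contribution as $2ab+V^2=\sum_e\Delta_e(a-\Delta_e)$ exactly as in the earlier proof. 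This is $\ge0$ precisely when $2\lfloor n/2\rfloor\ge2\chi+4$. That gives $p_a(Z)\ge1$ for even $n\ge2\chi+4$. If the odd case $n=2\chi+5$ falls short, I adjust by raising $a$ to $3$ with $\Delta_e\in\{1,2\}$, or better use $a=2$ with a profile using all $n$ edges. To do that, I allow the omitted component to be adjacent so that one path has length $\lceil n/2\rceil$ while the other carries $b$, and I recheck the count.

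Finally, I will verify that $Z$ is effective and integral with support in $\sigma_k$: coefficient $0$ on $C_k$ and nonnegative elsewhere. I will also note that $p_a(Z)\ge1>0$ contradicts the genus bound. This establishes the "only if" direction, and with it the sharpness claimed after Corollary~\ref{one-fiber-cor}.
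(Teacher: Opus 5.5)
Your argument is correct and is essentially the paper's. The ``if'' part is Proposition~\ref{general-Artin-bound-prop} with $S=\{C_k\}$, and the converse takes $a=2$ with unit drops along both paths from $C_1$ toward a distant omitted component. The paper stops the drops at $b=\chi+2$, so it only needs both paths to have at least $\chi+2$ edges, and this yields $p_a(Z)=1$ exactly, as used in the genus-one sharpness remark and in Section~\ref{I11-section}; your choice $b=\lfloor n/2\rfloor$ gives $p_a(Z)=\lfloor n/2\rfloor-\chi-1\ge1$, which proves the proposition but gives genus one only for $n\in\{2\chi+4,2\chi+5\}$.

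Your worry about odd $n$ is unfounded: $\chi+2$ is an integer, so $\lfloor n/2\rfloor\ge\chi+2$ for every $n\ge2\chi+4$, and at $n=2\chi+5$ your own formula already gives exactly $0$. Delete the proposed fallbacks, which could not help anyway. With $a=2$ both paths must drop by the same total $b$, so for odd $n$ no profile has unit drops on all $n$ edges; and $a=3$ gives at most $-6\chi-6+2n<0$ at $n=2\chi+5$.
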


\begin{proof}
Suppose first that $n\le2\chi+3$. For the selection $S=\{C_k\}$ one has
$\mathcal F_S=\sigma_k$ and $N(\pi)=n$. Hence
Proposition~\ref{general-Artin-bound-prop} gives
\[
 D^2+K_X\cdot D\le-2
\]
for every nonzero effective integral divisor $D$ supported on $\sigma_k$.
Thus $p_a(D)\le0$.

Conversely, assume $n\ge2\chi+4$. Choose $C_k$ so that both paths from
$C_1$ to $C_k$ have at least $\chi+2$ edges. Write their initial segments
as
\[
 C_1=A_0,A_1,\ldots,A_{\chi+2},\qquad
 C_1=B_0,B_1,\ldots,B_{\chi+2}.
\]
Set
\[
 Z=2C_0+(\chi+2)C_1+
   \sum_{j=1}^{\chi+1}(\chi+2-j)(A_j+B_j).
\]
Along each path there are exactly $\chi+2$ nonzero drops, all equal to
one. Therefore
\[
 Z^2+K_X\cdot Z=(-2\chi-4)+(2\chi+4)=0,
\]
and hence $p_a(Z)=1$. This proves the converse.
\end{proof}

\subsection{A Picard-number-three variant}
The main theorem treats reducible fibers of type $I_n$. In Picard number
three, the same conclusion also holds for the other Kodaira fiber with two
components.

\begin{theorem}
Let $\pi\colon X\to\mathbb P^1$ be a Jacobian elliptic surface with
$\chi(\mathcal O_X)\ge3$ and $\rho(X)=3$. If $\pi$ has exactly one reducible
fiber, then $X$ is a Mori dream surface of Kodaira dimension one.
\end{theorem}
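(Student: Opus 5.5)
The first step is to reduce to a very small configuration. Since $\rho(X)=3$, the Shioda--Tate formula gives
\[
 3=2+\operatorname{rank}\mathrm{MW}(\pi)+(m-1),
\]
where $m\ge2$ is the number of components of the unique reducible fiber. This forces $m=2$ and $\operatorname{rank}\mathrm{MW}(\pi)=0$, so $\mathrm{MW}(\pi)$ is finite. The reducible fiber is therefore of type $I_2$ or of type $III$.

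\emph{Type $I_2$.} Here Theorem~\ref{Jac-thm}(2) applies directly, since $N(\pi)=2\le2\chi+3$.

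\emph{Type $III$.} Write the fiber as $C_1+C_2$, with $C_0\cdot C_1=1$, $C_1^2=C_2^2=-2$, and $C_1\cdot C_2=2$ (one tangency point). The intersection numbers agree with those of $I_2$, so I would rerun the arguments of Section~\ref{proof-section} by hand.

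For the Mori cone, the fibration cone is $\mathcal C_\pi=\langle C_0,C_1,C_2\rangle$. Its dual has extremal rays $F$, $H_1:=C_0+\chi F$ and $H_2:=C_0+\chi F-\tfrac12C_2$. These are the classes $H_S$ for $S=\{C_1\}$ and $S=\{C_2\}$, with $\omega_{C_2}=\tfrac12 C_2$ and $\delta=\tfrac12$. Let $\Gamma\ne C_0$ be an irreducible horizontal curve with $m=\Gamma\cdot F$. Then $C_2\cdot\Gamma\le m$, so
\[
 H_2\cdot\Gamma\ge \Gamma\cdot C_0+m\bigl(\chi-\tfrac12\bigr)\ge0,
\]
and the same bound holds trivially for $H_1$. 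Both classes are nonnegative on $C_0,C_1,C_2$ and on $F$, so they are nef. Hence $\NE(X)=\mathcal C_\pi$, which is rational polyhedral. Moreover $H_1^2=\chi>0$ and $H_2^2=\chi-\tfrac12>0$, so both nonvertical nef rays are big.

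For semiampleness, $F$ is semiample. By Proposition~\ref{NE-reduced-prop} it suffices to treat $H_1$ and $2H_2$.

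\emph{The class $H_1$.} Its null locus is $C_0\sqcup C_2$, which are disjoint because $C_0$ meets only $C_1$. Each connected component is a single smooth rational curve $C$ with $C^2<0$. For a multiple $aC$ one has $(aC)^2+K_X\cdot aC<0$, because $K_X\cdot C_0=\chi-2$ while $(aC_0)^2=-\chi a^2$, and $K_X\cdot C_2=0$.

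\emph{The class $2H_2$.} Its null locus is the chain $C_0\cup C_1$, meeting transversally in one point. For $Z=aC_0+bC_1\ne0$,
\[
 Z^2+K_X\cdot Z=-\chi a^2+2ab-2b^2+(\chi-2)a\le-\bigl(\chi-\tfrac12\bigr)a^2+(\chi-2)a.
\]
This is negative when $a\ge1$. When $a=0$ it equals $-2b^2\le-2$. It is even in all cases, so it is at most $-2$.

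Note that neither null locus contains both $C_1$ and $C_2$. Consequently the tangency at the type $III$ fiber never enters these computations; this is the one point where type $III$ could behave differently from $I_2$, and I expect checking it to be the main, though mild, obstacle.

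With $p_a\le0$ on every positive cycle supported on each null locus, Artin's criterion gives $H^1(Z,\mathcal O_Z)=0$ for all such cycles. Choosing blocks and applying Proposition~\ref{block-semi-prop} yields empty stable base loci, so $H_1$ and $H_2$ are semiample. Since $q(X)=0$, Proposition~\ref{char-MDS-prop} shows that $X$ is a Mori dream surface, and $\kappa(X)=1$ follows from Proposition~\ref{kod1-basic-prop}.
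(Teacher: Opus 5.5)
Your proposal is correct and follows the paper's proof essentially step by step. You reduce to types $I_2$ or $\mathrm{III}$ via Shioda--Tate and handle $I_2$ by Theorem~\ref{Jac-thm}. For type $\mathrm{III}$ you use the same nef generators $F$, $H_1$, $H_2$, the same null loci $C_0\sqcup C_2$ and $C_0\cup C_1$, and the same bound $-(\chi-\tfrac12)a^2+(\chi-2)a$ with parity and Artin's criterion.

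The differences are cosmetic. You verify nefness of $H_1,H_2$ with the horizontal-curve estimate from the proof of Theorem~\ref{Jac-thm}(1). The paper instead notes that the inverse intersection matrix is coefficientwise nonnegative. For $H_1$ you run Artin's criterion on multiples, where the paper simply takes the block $C_0+C_2$ of two rational curves.
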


\begin{proof}
Set $\chi=\chi(\mathcal O_X)$. Shioda--Tate shows that the Mordell--Weil
group is finite and that the unique reducible fiber is of type $I_2$ or
$\mathrm{III}$. The first case follows from Theorem~\ref{Jac-thm}, so assume
that the fiber is of type $\mathrm{III}$. Write it as $F_1=C_1+C_2$, where
$C_0$ is the zero section and $C_0\cdot C_1=1$. Then
\[
 C_1^2=C_2^2=-2,\qquad C_1\cdot C_2=2.
\]
The inverse of the intersection matrix of $C_0,C_1,C_2$ is
\[
 \begin{pmatrix}
 0 & 1 & 1\\
 1 & \chi & \chi\\
 1 & \chi & \chi-\dfrac12
 \end{pmatrix},
\]
which is coefficientwise nonnegative. Hence
$\NE(X)=\sum_{i=0}^2\mathbb R_{\ge0}[C_i]$. The dual nef cone is generated by
$F=C_1+C_2$, by $H_1=C_0+\chi F$, and by
\[
 H_2=C_0+\chi C_1+\left(\chi-\frac12\right)C_2.
\]
The fiber class is semiample. The null locus of $H_1$ is
$C_0\sqcup C_2$. The divisor $C_0+C_2$ is a block for $H_1$, and both
components are rational; hence Proposition~\ref{block-semi-prop} shows that
$H_1$ is semiample. Choose an integral multiple $M_2$ of $H_2$. Its null locus is
$C_0\cup C_1$, and every nonzero positive cycle $Z=aC_0+bC_1$ satisfies
\[
 Z^2+K_X\cdot Z
 =-\chi a^2-2b^2+2ab+(\chi-2)a\le-2.
\]
Indeed, this is clear for $a=0$; for $a\ge1$, completing the square in $b$
gives an upper bound
$-(\chi-\tfrac12)a^2+(\chi-2)a\le-\tfrac32$, while the expression is even.
Artin's criterion shows that every positive cycle supported on this null
locus has vanishing first cohomology. Applying
Proposition~\ref{block-semi-prop} to a block for $M_2$ shows that $M_2$ is
semiample. Thus all extremal nef rays are semiample, and
Propositions~\ref{NE-reduced-prop} and~\ref{char-MDS-prop} finish the proof.
\end{proof}

\section{A big and nef divisor which is not semiample}
\label{I11-section}
The cycle used below is the genus-one cycle constructed in the proof of
Proposition~\ref{non-vertical-prop} for $(\chi,n)=(3,11)$.
Let $\mathcal S_{I_{11}}$ denote the locus of Jacobian elliptic surfaces with
$\chi(\mathcal O_X)=3$ and singular-fiber configuration $I_{11}+25I_1$, and
let $\widetilde{\mathcal S}_{I_{11}}\to\mathcal S_{I_{11}}$ be the finite
cover on which the components of the $I_{11}$-fiber are labeled.

\begin{lemma}
\label{I11-cycle-lem}
Let $X$ be a surface represented by a point of
$\widetilde{\mathcal S}_{I_{11}}$. Write the reducible fiber as
$F=C_1+\cdots+C_{11}$, with cyclically ordered components, and let $C_0$ be
the zero section, numbered so that $C_0\cdot C_1=1$. Write the two paths from
$C_1$ to $C_6$ as
\[
 C_1=A_0,A_1,\ldots,A_5=C_6,
 \qquad
 C_1=B_0,B_1,\ldots,B_6=C_6.
\]
Let $H_6$ be the rational divisor in the span of
$C_0,\ldots,C_{11}$ determined by $H_6\cdot C_i=\delta_{i6}$, and set
$L=11H_6$. Define
\[
 Z=2C_0+5C_1+\sum_{j=1}^{4}(5-j)(A_j+B_j),
 \qquad
 D=\sum_{j=1}^{5}j(A_j-B_j),
\]
and put $Z'=Z-C_0$. Then $L$ is integral, $L^2=33$, and
$L\cdot C_i=11\delta_{i6}$. Moreover, $Z$ is a minimally elliptic cycle,
\[
 H^0(Z',\mathcal O_{Z'})=\mathbb C,
 \qquad H^1(Z',\mathcal O_{Z'})=0,
 \qquad \mathcal O_{Z'}(D)\simeq\mathcal O_{Z'}.
\]
In addition, $\operatorname{Pic}^0(Z)\simeq\mathbb G_a$ and
\[
 \mathcal O_Z(2L)\simeq\mathcal O_Z(-D).
\]
\end{lemma}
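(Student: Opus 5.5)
The plan is to treat the statement in four layers: the lattice facts about $L$, the combinatorics of $Z$, the rationality of $Z'$ with triviality of $D$ on it, and then the structure of $\operatorname{Pic}(Z)$ together with the final isomorphism.

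\emph{Lattice facts about $L$.} By Proposition~\ref{supporting-class-prop} with $S=\{C_6\}$, we have $H_6=C_0+3F-\omega_{C_6}$. Here $C_6$ has cyclic distance $d=5$ from $C_1$, so $\delta(C_6)=5\cdot 6/11=30/11$ and $H_6^2=3-30/11=3/11$. This gives $L^2=121\cdot 3/11=33$ and $L\cdot C_i=11\delta_{i6}$. By the formula for $A^{-1}$ in Lemma~\ref{local-correction-lem}, the coefficients of $\omega_{C_6}$ lie in $\frac1{11}\mathbb Z$, so $L=11C_0+33F-11\omega_{C_6}$ is integral.

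\emph{Combinatorics of $Z$.} The computation in the proof of Proposition~\ref{non-vertical-prop} with $\chi=3$ gives $Z^2+K_X\cdot Z=0$, hence $p_a(Z)=1$. For minimal ellipticity I will show that every cycle $0<Y<Z$ has $p_a(Y)\le0$, using the edge-drop bookkeeping from Proposition~\ref{general-Artin-bound-prop}.
\begin{itemize}
\item If the coefficient $a$ of $C_0$ is $0$ or $1$, that proposition gives $Y^2+K_X\cdot Y\le-2$ directly.
\item If $a=2$, each of the $11$ edges contributes $\Delta_e(2-\Delta_e)\le1$, so the total is at most $-12+2+(\text{number of edges with }\Delta_e=1)$.
\item The drops along the two paths (of lengths $5$ and $6$) must both sum to the coefficient $b$ of $C_1$. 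Hence they cannot all equal $1$, and $p_a=1$ forces $b=5$ with exactly one zero drop, on the longer path.
\item Together with $Y\le Z$, this pins $Y=Z$.
\end{itemize}
Connectedness of $Z$ is clear, so $Z$ is minimally elliptic. Then $h^0(\mathcal O_Z)=1$ and $h^1(\mathcal O_Z)=1$.

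\emph{Rationality of $Z'$ and triviality of $D$.} Since $Z'=Z-C_0$ has $C_0$-coefficient $1$, every subcycle of $Z'$ has $p_a\le0$ by the previous step. Artin's criterion then gives $H^1(Z',\mathcal O_{Z'})=0$, and connectedness gives $H^0=\mathbb C$. With $H^1=0$, line bundles on $Z'$ are determined by their degrees on the components. The support of $Z'$ is the tree $C_0,C_1,A_1,\dots,A_4,B_1,\dots,B_4$ of rational curves. A direct check shows $D\cdot C=0$ for each of these components: for instance $D\cdot A_j=(j-1)+(j+1)-2j=0$, $D\cdot C_1=1-1=0$, and $D\cdot A_4=3+5-8=0$. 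Hence $\mathcal O_{Z'}(D)\simeq\mathcal O_{Z'}$.

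\emph{Structure of $\operatorname{Pic}^0(Z)$.} The group $\operatorname{Pic}^0(Z)$ is a connected commutative algebraic group of dimension $h^1(\mathcal O_Z)=1$. Since $Z_{\mathrm{red}}$ is a tree of smooth rational curves, $\operatorname{Pic}^0(Z_{\mathrm{red}})=0$. The kernel of $\operatorname{Pic}(Z)\to\operatorname{Pic}(Z_{\mathrm{red}})$ is filtered by vector groups, so it is unipotent, and therefore $\operatorname{Pic}^0(Z)\simeq\mathbb G_a$.

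\emph{The final isomorphism.} First check numerically that $2L+D$ has degree $0$ on every component of $Z$. This holds because $L$ is orthogonal to all of them and $D$ is as well, by the previous computation extended to $C_0$. Next, write $2L+D$ in the basis of Proposition~\ref{jac-rho-prop} as an integral combination
\[
\alpha Z+\beta F+\sum(\text{components }C_i\text{ not in the support of }Z).
\]
Then evaluate each piece on $Z$:
\begin{itemize}
\item $\mathcal O_Z(Z)\simeq\omega_Z\otimes\mathcal O_Z(-K_X)\simeq\mathcal O_Z(-F)$ by adjunction, since $\omega_Z\simeq\mathcal O_Z$ for a minimally elliptic cycle and $K_X\sim F$.
\item $\mathcal O_Z(F)$ is the pullback of $\mathcal O_{\mathbb P^1}(1)$ under $Z\to\mathbb P^1$, and it can be computed using a fiber disjoint from $Z$ away from $C_0$.
\item The remaining components $C_6,B_5,\dots$ restrict to explicit effective Cartier divisors supported at the points where they meet $A_4$ and $B_4$.
\end{itemize}
This is the step I expect to be the main obstacle. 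Degree-zero considerations only determine the class modulo $\operatorname{Pic}^0(Z)\simeq\mathbb G_a$, so one has to track actual linear equivalences rather than numerical ones. My first attempt is to choose the decomposition so that every non-$Z$ term is either a fiber disjoint from the support of $Z$ or cancels exactly between $2L$ and $D$. The identity $\mathcal O_Z(2L)\simeq\mathcal O_Z(-D)$ then reduces to the adjunction identity for $\mathcal O_Z(Z)$ plus the triviality $\mathcal O_{Z'}(D)\simeq\mathcal O_{Z'}$ established above.
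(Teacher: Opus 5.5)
Your first four layers (the lattice computation of $L$, the drop bookkeeping for minimal ellipticity, injectivity of $\operatorname{Pic}(Z')\to\operatorname{Pic}(Z'_{\mathrm{red}})$, and unipotence of $\operatorname{Pic}^0(Z)$) essentially follow the paper's proof. The gap is the last assertion, $\mathcal O_Z(2L)\simeq\mathcal O_Z(-D)$. You leave it as a plan, and the plan relies on the wrong ingredient. Triviality of $\mathcal O_{Z'}(D)$ only places $\mathcal O_Z(D)$ in $\ker(\operatorname{Pic}(Z)\to\operatorname{Pic}(Z'))=\operatorname{Pic}^0(Z)\simeq\mathbb G_a$. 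That is exactly where the nonzero obstruction of Theorem~\ref{I11-thm} lives, so information on $Z'$ cannot decide this isomorphism. What is missing is an exact identity of divisors on $X$, read off by comparing coefficients with the explicit form of $L$. Writing $F_0=C_1+\cdots+C_{11}$ for the reducible fiber,
\[
\begin{aligned}
2L+D&=22C_0+66C_1+55C_2+44C_3+33C_4+22C_5\\
&\quad+11C_6+11C_7+22C_8+33C_9+44C_{10}+55C_{11}=11Z+11F_0.
\end{aligned}
\]
So in your decomposition $\alpha=\beta=11$ and there are no leftover components: the coefficients of $C_6$ and $C_7$ coming from $2L$ and $D$ combine exactly into $11F_0$. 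Since $F_0\sim K_X$, this reads $2L\sim11(K_X+Z)-D$. Restricting to $Z$ and using your own adjunction bullet, $\mathcal O_Z(K_X+Z)\simeq\omega_Z\simeq\mathcal O_Z$, gives
$\mathcal O_Z(2L)\simeq\omega_Z^{\otimes11}\otimes\mathcal O_Z(-D)\simeq\mathcal O_Z(-D)$. This is the paper's argument, and $\mathcal O_{Z'}(D)\simeq\mathcal O_{Z'}$ plays no role in it.

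There are also smaller problems.
\begin{itemize}
\item Connectedness does not give $H^0(Z',\mathcal O_{Z'})=\mathbb C$ for a nonreduced cycle. For a $(-2)$-curve $E$, the cycle $2E$ is connected with $H^1(\mathcal O_{2E})=0$, yet $h^0(\mathcal O_{2E})=4$. You need $\chi(\mathcal O_{Z'})=1$. Your drop formula gives this at once: $a=1$ and all drops are $0$ or $1$, so $Z'^2+K_X\cdot Z'=-2$. The paper gets it instead from $0\to\mathcal O_{C_0}(-Z')\to\mathcal O_Z\to\mathcal O_{Z'}\to0$ with $\mathcal O_{C_0}(-Z')\simeq\mathcal O_{\mathbb P^1}(-2)$.
\item Artin's criterion in its usual form, quantified over all cycles on the support, does not apply to the support of $Z'$, since that support carries $Z$ with $p_a(Z)=1$. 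You need the subcycle version: a minimal cycle with $h^1\neq0$ has $p_a\ge1$. This is the Laufer statement the paper cites.
\item In your $a=2$ case, $p_a=1$ also allows $b=6$ with a single drop $2$ on the shorter path. It is $Y\le Z$ that excludes this, so invoke that bound at this step.
\end{itemize}
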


\begin{proof}
For the selection $S=\{C_6\}$, Proposition~\ref{supporting-class-prop}
identifies $H_6$ with $H_S$. Using the inverse Cartan matrix displayed in the
proof of Lemma~\ref{local-correction-lem}, one obtains
\[
\begin{split}
L={}&11C_0+33C_1+27C_2+21C_3+15C_4+9C_5+3C_6\\
   &\quad+8C_7+13C_8+18C_9+23C_{10}+28C_{11}.
\end{split}
\]
Thus $L$ is integral, $L\cdot C_i=11\delta_{i6}$, and $L^2=33$. Since
$K_X\sim F$, comparison of coefficients gives
\begin{equation}
 2L=11(K_X+Z)-D.
 \label{eq:I11-relation}
\end{equation}
By the construction in Proposition~\ref{non-vertical-prop}, one has
$Z^2+K_X\cdot Z=0$. Since $K_X\sim F$ and $Z\cdot F=2$, it follows that
$K_X\cdot Z=2$, $Z^2=-2$, and $p_a(Z)=1$.

We claim that every proper nonzero positive integral subcycle $T<Z$ satisfies
$T^2+K_X\cdot T\le-2$. Write
\[
 T=aC_0+cC_1+\sum_{j=1}^4(x_jA_j+y_jB_j),
\]
where $0\le a\le2$, $0\le c\le5$, and
$0\le x_j,y_j\le5-j$. Put $x_0=y_0=c$, $x_5=y_5=0$, and set
$d_j=x_{j-1}-x_j$ and $e_j=y_{j-1}-y_j$. Then
$\sum d_j=\sum e_j=c$, and
\begin{equation}
 T^2+K_X\cdot T
 =-3a^2+a+2ac-\sum_{j=1}^5(d_j^2+e_j^2).
 \label{eq:I11-drops}
\end{equation}
If $a=0$ and $T\ne0$, the last sum is a positive even integer. If $a=1$,
the inequalities $\sum d_j^2\ge c$ and $\sum e_j^2\ge c$ give the same
bound. If $a=2$ and $c\le4$, the right-hand side is at most
$-10+2c\le-2$. Finally, let $a=2$ and $c=5$. Each sum of five squares is at
least $5$, with equality only when all five drops are equal to $1$. Since
$T<Z$, one sequence is not $(1,1,1,1,1)$; its sum of squares is odd and at
least $7$. Thus the last sum in \eqref{eq:I11-drops} is at least $12$, proving
the claim.

The class $L$ is orthogonal to every component of $Z$ and has positive square,
so the Hodge index theorem shows that the intersection matrix of the support
of $Z$ is negative definite. The claim therefore shows that $Z$ is minimally
elliptic in the sense of \cite{Laufer77}.

Let $p=C_0\cap C_1$. Removing one copy of $C_0$ gives
\begin{equation}
 0\longrightarrow\mathcal O_{C_0}(-Z')
 \longrightarrow\mathcal O_Z
 \longrightarrow\mathcal O_{Z'}
 \longrightarrow0.
 \label{eq:I11-exact}
\end{equation}
Since $Z'|_{C_0}=C_0|_{C_0}+5p$, one has
$\mathcal O_{C_0}(-Z')\simeq N^*_{C_0/X}(-5p)
\simeq\mathcal O_{\mathbb P^1}(-2)$. By \cite[Section~2]{Laufer77}, the
preceding inequalities imply $H^1(T,\mathcal O_T)=0$ for every proper positive
subcycle $0<T<Z$. In particular, $H^1(Z',\mathcal O_{Z'})=0$. Since
$\chi(\mathcal O_Z)=0$ and
$\chi(\mathcal O_{\mathbb P^1}(-2))=-1$, sequence
\eqref{eq:I11-exact} gives $\chi(\mathcal O_{Z'})=1$. Hence
$H^0(Z',\mathcal O_{Z'})=\mathbb C$, and the same sequence gives
$h^1(Z,\mathcal O_Z)=1$.

Let $R=Z'_{\mathrm{red}}$ and let $\mathcal J$ be the nilradical of
$\mathcal O_{Z'}$. Since both $Z'$ and $R$ are connected and
$H^1(Z',\mathcal O_{Z'})=H^1(R,\mathcal O_R)=0$, the sequence
$0\to\mathcal J\to\mathcal O_{Z'}\to\mathcal O_R\to0$ gives
$H^1(Z',\mathcal J)=0$. The logarithm identifies
$1+\mathcal J$ with $\mathcal J$, so
$\operatorname{Pic}(Z')\to\operatorname{Pic}(R)$ is injective. Since
$R$ is a tree of rational curves, a line bundle on $R$ is determined by its
multidegree. The divisor $D$ has degree zero on every component; hence
$\mathcal O_{Z'}(D)\simeq\mathcal O_{Z'}$.

The reduced support of $Z$ is a tree of rational curves, so
$\operatorname{Pic}^0(Z)$ has no torus or abelian part. Its tangent space is
$H^1(Z,\mathcal O_Z)$; hence it is a one-dimensional unipotent group and
$\operatorname{Pic}^0(Z)\simeq\mathbb G_a$. Furthermore,
$(K_X+Z)\cdot C=0$ for every component $C$ of $Z$, and adjunction for a
minimally elliptic cycle gives $\omega_Z\simeq\mathcal O_Z$
\cite{Laufer77}. Restricting \eqref{eq:I11-relation} to $Z$ therefore yields
$\mathcal O_Z(2L)\simeq\mathcal O_Z(-D)$.
\end{proof}

\begin{lemma}
\label{I11-obstruction-regularity-lem}
On $\widetilde{\mathcal S}_{I_{11}}$, the classes measuring whether
$\mathcal O_Z(D)$ is trivial form a regular section of a line bundle. In
particular, the locus
\[
 \mathcal U=\{s\in\widetilde{\mathcal S}_{I_{11}}:
 \mathcal O_{Z_s}(D_s)\not\simeq\mathcal O_{Z_s}\}
\]
is Zariski open.
\end{lemma}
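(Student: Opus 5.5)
The plan is to build the relative version of the extension sequence \eqref{eq:I11-exact} over $\widetilde{\mathcal S}_{I_{11}}$ and to identify the obstruction with the image of $D$ under a connecting map into a rank-one bundle.

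Work over a base $\widetilde{\mathcal S}$, passing to an étale cover or reduced components if necessary. Put $\mathcal X\to\widetilde{\mathcal S}$ for the universal surface; this exists at least locally, by choosing Weierstrass data. Because the components are labeled, the curves $C_0$, $C_i$, $A_j$, $B_j$ give relative divisors $\mathcal C_i$. Hence $\mathcal Z$, $\mathcal Z'$ and $\mathcal D$ are flat families of cycles over $\widetilde{\mathcal S}$.

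Lemma~\ref{I11-cycle-lem} gives constant fiberwise invariants: $h^1(\mathcal O_{Z'_s})=0$, $h^0(\mathcal O_{Z'_s})=1$, $h^1(\mathcal O_{Z_s})=1$, and $\operatorname{Pic}^0(Z_s)\simeq\mathbb G_a$. Cohomology and base change therefore shows that $\mathcal E:=R^1p_*\mathcal O_{\mathcal Z}$ is a line bundle and that $R^1p_*\mathcal O_{\mathcal Z'}=0$. Moreover, $R^1p_*\mathcal O_{\mathcal C_0}(-\mathcal Z')$ is a line bundle, since fiberwise it is $H^1(\mathcal O_{\mathbb P^1}(-2))$.

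Next take the multiplicative analogue of \eqref{eq:I11-exact}. Since $C_0$ appears with multiplicity two in $Z$, the ideal $\mathcal O_{C_0}(-Z')$ of $Z'$ in $Z$ has square zero. This gives the exact sequence
\[
0\to\mathcal O_{C_0}(-Z')\to\mathcal O_Z^*\to\mathcal O_{Z'}^*\to1,
\]
with the first map $f\mapsto1+f$. In the long exact sequence, $\operatorname{Pic}(Z)\to\operatorname{Pic}(Z')$ has kernel equal to $H^1(\mathcal O_{C_0}(-Z'))\simeq\mathbb C$, modulo the image of $H^0(\mathcal O_{Z'}^*)=\mathbb C^*$, which lifts. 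Because $\mathcal O_{Z'}(D)$ is trivial, $\mathcal O_Z(D)$ lies in this kernel, so it defines a class
\[
\theta_s\in H^1(C_0,\mathcal O_{C_0}(-Z'))\simeq H^1(\mathcal O_Z)=T\operatorname{Pic}^0(Z_s).
\]
This class vanishes exactly when $\mathcal O_{Z_s}(D_s)$ is trivial.

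To make $\theta$ regular in $s$, choose a trivialization $\mathcal O_{\mathcal Z'}(\mathcal D)\simeq\mathcal O_{\mathcal Z'}$ over the family. This is possible because $p_*\mathcal O_{\mathcal Z'}(\mathcal D)$ is a line bundle by base change (its fiberwise $h^0$ is $1$ and $h^1$ is $0$), so after shrinking it has a nowhere-vanishing section $\sigma$. Lift $\sigma$ locally on $\mathcal X$ to local sections of $\mathcal O_{\mathcal Z}(\mathcal D)$. The differences of these lifts are local sections of $1+\mathcal O_{\mathcal C_0}(-\mathcal Z')$, so they form a Čech cocycle with values in $\mathcal O_{\mathcal C_0}(-\mathcal Z')$. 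The resulting Čech class is a section of the line bundle $R^1p_*\mathcal O_{\mathcal C_0}(-\mathcal Z')$.

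Rescaling $\sigma$ by a unit changes the section by a unit. On $\mathcal C_0\simeq\mathbb P^1\times\widetilde{\mathcal S}$ the differences depend on $\sigma$ only through that factor. Hence the vanishing locus is well defined, the local sections glue to a regular section up to units, and $\mathcal U$ is its nonvanishing locus, which is Zariski open.

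The main obstacle is the relative bookkeeping. One must check that the labeled components really give flat relative divisors on the whole of $\widetilde{\mathcal S}_{I_{11}}$, which requires the $I_{11}$ fiber to stay of type $I_{11}$ on the stratum, and that base change holds without reducedness assumptions. I would handle this by working on the reduced structure of $\widetilde{\mathcal S}_{I_{11}}$ and invoking Grauert's theorem on constancy of $h^i$. Openness only needs the vanishing locus of a section of a line bundle to be closed, which is insensitive to nilpotents.
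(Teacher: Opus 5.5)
Your proposal is correct and follows essentially the same route as the paper. Both arguments use the square-zero extension $\mathcal Z'\subset\mathcal Z$ with ideal $\mathcal O_{\mathcal C_0}(-\mathcal Z')\simeq N^*_{\mathcal C_0/\mathcal X}(-5\mathfrak p)$, base change to make $R^1$ of that ideal and $p_*\mathcal O_{\mathcal Z'}(\mathcal D)$ invertible, and the sequence $1\to1+\mathcal I\to\mathcal O^*_{\mathcal Z}\to\mathcal O^*_{\mathcal Z'}\to1$ to turn the trivialized $\mathcal O_{\mathcal Z}(\mathcal D)$ into a section of that line bundle. One small correction: rescaling $\sigma$ by a unit pulled back from the base leaves the Čech class unchanged, since the ratios of the lifts are the same, so the local sections glue canonically and not just "up to units"; this is exactly what the paper's canonical trivialization of $\mathcal O_{\mathcal Z}(\mathcal D)\otimes q^*\mathcal N^{-1}$ records.
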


\begin{proof}
It is enough to work on an \'etale chart $S$ carrying a family of
elliptic surfaces and the labeled relative curves. Let $\mathcal Z$,
$\mathcal Z'$, and $\mathcal D$ be the relative cycle and divisor obtained
from the formulas in Lemma~\ref{I11-cycle-lem}, and let
$q\colon\mathcal Z\to S$, $q'\colon\mathcal Z'\to S$, and
$g\colon\mathcal C_0\to S$ be the projections. The inclusion
$\mathcal Z'\subset\mathcal Z$ is square-zero, and its ideal is
\[
 \mathcal I\simeq
 N^*_{\mathcal C_0/\mathcal X}(-5\mathfrak p),
\]
where $\mathfrak p=\mathcal C_0\cap\mathcal C_1$. Each geometric fiber of
$\mathcal I$ is $\mathcal O_{\mathbb P^1}(-2)$. Cohomology and base change
therefore show that $g_*\mathcal I=0$ and that
$\mathcal E:=R^1g_*\mathcal I$ is an invertible sheaf whose formation
commutes with base change.

Set $\mathcal M=\mathcal O_{\mathcal Z}(\mathcal D)$. By
Lemma~\ref{I11-cycle-lem}, $\mathcal M|_{Z'_s}$ is trivial and
$h^0(Z'_s,\mathcal O_{Z'_s})=1$, $h^1(Z'_s,\mathcal O_{Z'_s})=0$ for every
geometric point $s$. Thus
$\mathcal N:=q'_*(\mathcal M|_{\mathcal Z'})$ is invertible, base change
holds, and evaluation identifies
$q'^*\mathcal N$ with $\mathcal M|_{\mathcal Z'}$. Consequently,
$\mathcal M_0:=\mathcal M\otimes q^*\mathcal N^{-1}$ has a canonical
trivialization on $\mathcal Z'$.

The sequence
$1\to1+\mathcal I\to\mathcal O_{\mathcal Z}^*
\to\mathcal O_{\mathcal Z'}^*\to1$, together with
$1+\mathcal I\simeq\mathcal I$, identifies the kernel of
$\operatorname{Pic}_{\mathcal Z/S}\to
\operatorname{Pic}_{\mathcal Z'/S}$ with the additive group associated with
$\mathcal E$. Hence $\mathcal M_0$ defines a regular section
$\eta(\mathcal D)\in H^0(S,\mathcal E)$, whose value at $s$ vanishes exactly
when $\mathcal O_{Z_s}(D_s)$ is trivial. These sections are compatible on
\'etale overlaps and therefore descend to the labeled cover. Their
nonvanishing locus is open.
\end{proof}

\begin{lemma}
\label{I11-pure-dimension-lem}
Every irreducible component of $\mathcal S_{I_{11}}$, and of its finite
labeled cover, has dimension $18$.
\end{lemma}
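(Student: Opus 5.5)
We must show that every irreducible component of the locus $\mathcal S_{I_{11}}$ of Jacobian elliptic surfaces with $\chi=3$ and configuration $I_{11}+25I_1$ has dimension $18$. Since the labeled cover is finite over $\mathcal S_{I_{11}}$, it suffices to treat $\mathcal S_{I_{11}}$. The plan is to prove an upper bound by computing a tangent space and a lower bound by counting equations.

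We work with Weierstrass models. A Jacobian elliptic surface with $\chi=3$ has a Weierstrass equation
\[
 y^2=x^3+a(t)x+b(t),\qquad \deg a\le 12,\ \deg b\le 18,
\]
in homogeneous coordinates on $\mathbb P^1$. The parameter space $W=H^0(\mathcal O(12))\oplus H^0(\mathcal O(18))$ has dimension $13+19=32$. The group $\mathrm{GL}_2\times\mathbb G_m/\mu$, of dimension $4+1-0=5$, acts on $W$. On the open set of minimal models with generic stabilizer finite, the moduli space therefore has dimension $32-5=27$. The configuration $I_{11}+25I_1$ means the discriminant $\Delta=4a^3+27b^2$, of degree $36$, has one root of multiplicity $11$ with $a,b$ nonvanishing there, and $25$ simple roots. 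Up to the $\mathrm{PGL}_2$ action we fix this point at $t=0$.

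\textbf{Lower bound.} The condition $\operatorname{ord}_0\Delta\ge 11$ with $a(0)b(0)\ne0$ imposes $11$ conditions on the $29$-dimensional space $\{(a,b)\}$ modulo the remaining group. This is a codimension count, so every component of the incidence locus $\{\operatorname{ord}_0\Delta\ge11\}\subset W\times\mathbb P^1$ has dimension at least $32+1-11=22$. Quotienting by the $5$-dimensional group, which acts with finite stabilizers on this locus, gives dimension at least $27-11+1-1=\dots$; more carefully, each component of $\mathcal S_{I_{11}}$ has dimension at least $(32+1-11)-5=17$. This misses $18$ by one, so the naive count must be sharpened. The standard fix is the multiplicative parametrization: the $I_n$ condition is $a=-3u^2+t^{k}\alpha$, $b=2u^3+t^{k}\beta$ with a relation, and for multiplicative fibers the conditions $\operatorname{ord}\Delta\ge n$ are known to impose only $n-1$ independent conditions. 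The reason is that the first vanishing $\operatorname{ord}\Delta\ge1$, once $a(0)\ne0$, also involves choosing the point, and the expected codimension of $I_n$ in moduli is $n-1$.

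Concretely, we would write $a=-3u^2+t^{\lceil n/2\rceil}\alpha$ type normal forms following Shioda, or argue via the tangent space as follows. Deformations of $X$ preserving the $I_{11}$ fiber correspond to deformations preserving the $A_{10}$ configuration. The local conditions are the $10$ conditions to keep $C_2,\dots,C_{11}$ algebraic, inside the $27$-dimensional Jacobian moduli (in which $C_0,F$ stay). This gives codimension at most $10$ at each point, since each $(-2)$-curve class imposes at most one condition on $h^{1,1}$-type Noether–Lefschetz deformations. The lower bound is therefore $27-10=17$. Hmm — we need $18$, so the correct moduli dimension should be checked: $10\chi-2=28$ for Jacobian elliptic surfaces. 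Indeed $\dim W/\text{group}=32-4=28$, since the $\mathbb G_m$ scaling $(a,b)\mapsto(\lambda^4a,\lambda^6b)$ is absorbed by the $\mathrm{GL}_2$ center. So the lower bound is $28-10=18$.

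\textbf{Upper bound.} Locally, $\mathcal S_{I_{11}}$ embeds in the Noether–Lefschetz locus where the rank-$12$ lattice spanned by $C_0,\dots,C_{11}$ stays algebraic. First-order deformations preserving all curves $C_i$ form the kernel of $H^1(T_X)\to\bigoplus_{i\ge2}H^2(\Omega^0)$-type maps, namely the cup product with the classes $[C_i]\in H^1(\Omega^1)$ landing in $H^2(\mathcal O_X)$. For $C_0,F$ the resulting conditions cut $H^1(T_X)$ down to the $28$-dimensional Jacobian directions. For the $10$ classes $C_2,\dots,C_{11}$ we must show the $10$ conditions are independent. The approach is to use the fact that the $I_{11}$ fiber can be smoothed to $I_{10}+I_1$ independently, and more generally to use a discriminant map: near our point, a Weierstrass deformation moves the $36$ roots of $\Delta$. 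The map from local Jacobian moduli to configurations of the $11$ colliding roots (symmetric functions) is submersive. This would follow from Shioda–Usui type surjectivity for multiplicative fibers, and we expect this independence to be the main obstacle. Granting it, $\mathcal S_{I_{11}}$ is locally contained in a smooth germ of dimension $18$, while the lower bound gives $\ge18$. Hence every component has dimension exactly $18$, and the same holds on the finite labeled cover.
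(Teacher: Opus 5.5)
\textbf{The gap is the upper bound.} You reduce it to the independence of ten first-order conditions and then write ``Granting it''. Without that step you only have $\dim\ge18$, and purity is precisely the claim that no component is larger. Independence would also be needed at a point of every component, not just at one point. The paper does not prove this by hand: it quotes Kloosterman's Proposition~4.7, which gives $\dim\mathcal S_{I_{11}}=30-12=18$.

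If you want a self-contained argument, your Weierstrass setup already suffices.
\begin{itemize}
\item On the open set where $A(p)\ne0$, choose locally a power series $u$ in a local parameter $t$ at $p$, depending analytically on $(A,p)$, with $u^2=-A/3$.
\item Then $4A^3+27B^2=27(B-2u^3)(B+2u^3)$. At most one factor vanishes at $p$, because their difference $4u^3$ is a unit.
\item So near any point of the incidence locus, $\operatorname{ord}_p\Delta\ge11$ is equivalent to the eleven equations $B\equiv2u^3\pmod{t^{11}}$, for a suitable sign of $u$.
\item These equations have independent differentials already in the $B$-direction, because the jet map $H^0(\mathbb P^1,\mathcal O(18))\to\mathcal O_p/\mathfrak m_p^{11}$ is surjective.
\end{itemize}
Hence the incidence locus is smooth of dimension exactly $33-11=22$ there. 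Dividing by the $4$-dimensional group with finite stabilizers gives both bounds at once.

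\textbf{The lower bound you finally give is also not justified as written}, although the fix is immediate. Your first count is exactly the paper's argument, spoiled only by the group dimension. The scalars of $\mathrm{GL}_2$ act on $(A,B)$ through the weighted $\mathbb G_m$. So the effective group is $\mathbb G_m\times\mathrm{PGL}_2$, of dimension $4$, as you yourself notice when you correct $27$ to $28$. The count then gives $(32+1-11)-4=18$ directly, and nothing needs sharpening.

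The Noether--Lefschetz substitute should be dropped. Since $p_g=2$, requiring a class to stay of type $(1,1)$ imposes up to $h^{0,2}=2$ conditions, not one. One condition per component is true here, but it does not come from Hodge theory. It comes from the one-dimensional obstruction space $H^1(N_{C_i/X})=H^1(\mathcal O_{\mathbb P^1}(-2))$ of each curve, or from the Weierstrass count above.
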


\begin{proof}
A Jacobian elliptic surface with fundamental line bundle
$\mathcal O_{\mathbb P^1}(3)$ has a short Weierstrass equation
$y^2=x^3+Ax+B$, with $A\in H^0(\mathbb P^1,\mathcal O(12))$ and
$B\in H^0(\mathbb P^1,\mathcal O(18))$. Thus the space of pairs $(A,B)$ has
dimension $32$, and adjoining a point $p\in\mathbb P^1$ gives dimension $33$.

Up to a nonzero constant, the discriminant is $\Delta=4A^3+27B^2$. The
condition $\operatorname{ord}_p(\Delta)\ge11$ is imposed by eleven jet
equations. Hence every component of the corresponding incidence locus has
dimension at least $22$. The conditions $A(p)\ne0$,
$\operatorname{ord}_p(\Delta)=11$, and simplicity of the remaining zeros of
$\Delta$ are open; they define the locus with configuration $I_{11}+25I_1$.

The changes of Weierstrass coordinates and of the coordinate on the base are
given by $\mathbb G_m\times\operatorname{PGL}_2$, of dimension $4$. On the
locus under consideration the stabilizers are finite, since the discriminant
has at least three distinct support points. It follows that every irreducible
component of $\mathcal S_{I_{11}}$ has dimension at least $22-4=18$. On the
other hand, \cite[Proposition~4.7]{Kloosterman07} gives
$\dim\mathcal S_{I_{11}}=30-12=18$. Thus the stratum is pure of dimension
$18$. A finite cover preserves dimensions of irreducible components.
\end{proof}

\begin{proof}[Proof of Theorem~\ref{I11-thm}]
We first exhibit a point of $\mathcal U$. Let $t$ be an affine coordinate on
$\mathbb P^1$, and consider
\begin{equation}
 y^2=x^3+\frac{1+t^4}{4}x^2+t^{11}q_7(t),
 \label{eq:I11-Weierstrass}
\end{equation}
where $q_7$ is general of degree $7$ and $q_7(0)\ne0$. The coefficients
$a_2=(1+t^4)/4$ and $a_6=t^{11}q_7(t)$ are sections of
$\mathcal O_{\mathbb P^1}(6)$ and $\mathcal O_{\mathbb P^1}(18)$,
respectively. Thus the fundamental line bundle is
$\mathcal O_{\mathbb P^1}(3)$ and $\chi(\mathcal O_X)=3$.

Up to nonzero constants, the Weierstrass invariants are
$c_4=(1+t^4)^2$ and
$\Delta=-t^{11}q_7(t)((1+t^4)^3+432t^{11}q_7(t))$. Since $c_4(0)\ne0$ and
$\operatorname{ord}_0(\Delta)=11$, the fiber over $0$ is of type $I_{11}$.
Simplicity of the remaining roots and coprimality with $c_4$ are open
conditions on $q_7$, expressed by the corresponding discriminant and
resultant, and they hold for general $q_7$. Thus the other twenty-five
singular fibers are of type $I_1$. For general $q_7$ the leading coefficient
of $\Delta$ is nonzero, so $\deg\Delta=36$ and there is no singular fiber at
infinity. Hence the configuration is $I_{11}+25I_1$, and
$K_X\sim F$, whence $\kappa(X)=1$.

We compute the obstruction of Lemma~\ref{I11-obstruction-regularity-lem} on
this surface. The ideal
$\mathcal I=\ker(\mathcal O_Z\to\mathcal O_{Z'})$ is
$N^*_{C_0/X}(-5p)\simeq\mathcal O_{\mathbb P^1}(-2)$. Locally at
$p=C_0\cap C_1$, if $\epsilon=0$ and $t=0$ are equations for $C_0$ and
$C_1$, then $Z$ and $Z'$ are defined by $\epsilon^2t^5=0$ and
$\epsilon t^5=0$, and $\mathcal I$ is generated by $\epsilon t^5$.
Lemma~\ref{I11-cycle-lem} gives
$\mathcal O_{Z'}(D)\simeq\mathcal O_{Z'}$, and its trivialization is unique
up to a nonzero constant.

Let
$q_A=C_1\cap A_1$ and $q_B=C_1\cap B_1$. Choose an analytic neighborhood
$U_0$ of $C_0\cup C_1$, and neighborhoods $U_A,U_B$ of the two tails, so that
$U_0\cap U_A\ne\varnothing$ near $q_A$,
$U_0\cap U_B\ne\varnothing$ near $q_B$, and
$U_A\cap U_B=\varnothing$.
\begin{figure}[H]
\centering
\begin{tikzpicture}[
x=.92cm,
y=.92cm,
curve/.style={
  draw=black,
  line width=.9pt,
  line cap=round
},
neigh/.style={
  dashed,
  line width=.8pt
},
lab/.style={
  font=\small,
  inner sep=1pt
},
slab/.style={
  font=\scriptsize,
  inner sep=1pt
},
mult/.style={
  circle,
  draw=red!75!black,
  text=red!75!black,
  fill=white,
  font=\scriptsize\bfseries,
  inner sep=1.4pt
}
]
\coordinate (p) at (0,0);
\coordinate (qA) at (2.70,1.45);
\coordinate (qB) at (2.70,-1.45);
\coordinate (u23) at (4.25,2.12);
\coordinate (u34) at (5.95,2.42);
\coordinate (u45) at (7.65,2.42);
\coordinate (u56) at (9.35,2.12);
\coordinate (l1110) at (4.25,-2.12);
\coordinate (l109) at (5.95,-2.42);
\coordinate (l98) at (7.65,-2.42);
\coordinate (l87) at (9.35,-2.12);
\coordinate (l76) at (10.85,-1.52);
\path[
  neigh,
  draw=green!50!black,
  fill=green!4
]
(-2.85,-1.85)
.. controls (-3.25,-.90) and (-3.25,.90) .. (-2.85,1.85)
.. controls (-1.60,2.35) and (-.10,1.90) .. (1.15,1.95)
.. controls (2.25,2.00) and (2.95,2.10) .. (3.22,1.68)
.. controls (3.40,1.32) and (3.08,.70) .. (2.83,.35)
.. controls (2.62,.08) and (2.62,-.08) .. (2.83,-.35)
.. controls (3.08,-.70) and (3.40,-1.32) .. (3.22,-1.68)
.. controls (2.95,-2.10) and (2.25,-2.00) .. (1.15,-1.95)
.. controls (-.10,-1.90) and (-1.60,-2.35) .. cycle;
\path[
  neigh,
  draw=blue!75!black,
  fill=blue!4
]
(2.25,1.00)
.. controls (2.85,3.25) and (4.90,3.65) .. (7.15,3.52)
.. controls (9.10,3.42) and (10.55,2.82) .. (11.18,1.72)
.. controls (11.42,1.28) and (11.28,.75) .. (10.78,.62)
.. controls (9.15,.28) and (7.40,.82) .. (5.45,.87)
.. controls (4.00,.90) and (2.85,.78) .. cycle;
\path[
  neigh,
  draw=magenta!70!blue,
  fill=magenta!4
]
(2.25,-1.00)
.. controls (2.85,-3.25) and (4.90,-3.65) .. (7.15,-3.52)
.. controls (9.10,-3.42) and (10.55,-2.82) .. (11.35,-1.72)
.. controls (11.62,-1.28) and (11.48,-.72) .. (10.92,-.58)
.. controls (9.25,-.25) and (7.42,-.82) .. (5.45,-.87)
.. controls (4.00,-.90) and (2.85,-.78) .. cycle;
\draw[curve]
(-2.45,-.12)
.. controls (-1.62,.66) and (-.68,.58) .. (p);
\draw[curve]
(qB)
.. controls (1.85,-.42) and (.92,-.08) .. (p)
.. controls (.92,.08) and (1.85,.42) .. (qA);
\draw[curve]
(qA)
.. controls (3.20,1.82) and (3.68,2.08) .. (u23);
\draw[curve]
(u23)
.. controls (4.78,2.40) and (5.38,2.48) .. (u34);
\draw[curve]
(u34)
.. controls (6.48,2.52) and (7.08,2.52) .. (u45);
\draw[curve]
(u45)
.. controls (8.18,2.48) and (8.80,2.35) .. (u56);
\draw[curve]
(u56)
.. controls (10.65,1.92) and (11.55,.85) .. (11.55,0)
.. controls (11.55,-.85) and (11.35,-1.25) .. (l76);
\draw[curve]
(l76)
.. controls (10.35,-1.82) and (9.85,-2.03) .. (l87);
\draw[curve]
(l87)
.. controls (8.83,-2.38) and (8.22,-2.48) .. (l98);
\draw[curve]
(l98)
.. controls (7.10,-2.52) and (6.48,-2.52) .. (l109);
\draw[curve]
(l109)
.. controls (5.38,-2.48) and (4.78,-2.40) .. (l1110);
\draw[curve]
(l1110)
.. controls (3.68,-2.08) and (3.20,-1.82) .. (qB);
\fill (p) circle (1.7pt);
\fill (qA) circle (1.7pt);
\fill (qB) circle (1.7pt);
\fill (u23) circle (1.7pt);
\fill (u34) circle (1.7pt);
\fill (u45) circle (1.7pt);
\fill (u56) circle (1.7pt);
\fill (l1110) circle (1.7pt);
\fill (l109) circle (1.7pt);
\fill (l98) circle (1.7pt);
\fill (l87) circle (1.7pt);
\fill (l76) circle (1.7pt);
\node[lab] at (-1.60,-0.2) {$C_0$};
\node[lab] at (1.10,0.6) {$C_1$};
\node[lab] at (3.45,1.42) {$C_2$};
\node[lab] at (5.10,1.98) {$C_3$};
\node[lab] at (6.80,2.03) {$C_4$};
\node[lab] at (8.50,1.88) {$C_5$};
\node[lab] at (12.05,.18) {$C_6$};
\node[lab] at (10.15,-1.52) {$C_7$};
\node[lab] at (8.52,-1.88) {$C_8$};
\node[lab] at (6.80,-1.98) {$C_9$};
\node[lab] at (5.10,-1.88) {$C_{10}$};
\node[lab] at (3.45,-1.42) {$C_{11}$};
\node[lab] at (3.45,2.45) {$A_1$};
\node[lab] at (5.10,2.95) {$A_2$};
\node[lab] at (6.80,3.05) {$A_3$};
\node[lab] at (8.50,2.85) {$A_4$};
\node[lab] at (3.45,-2.45) {$B_1$};
\node[lab] at (5.10,-2.9) {$B_2$};
\node[lab] at (6.80,-3) {$B_3$};
\node[lab] at (8.50,-2.95) {$B_4$};
\node[lab] at (10.15,-2.22) {$B_5$};
\node[slab] at (12.35,-.28) {$A_5=B_6$};
\node[slab] at (0,0.3) {$p$};
\node[slab] at (2.82,1.20) {$q_A$};
\node[slab] at (2.82,-1.20) {$q_B$};
\node[mult] at (-1.62,.40) {$2$};
\node[mult] at (1.68,.52) {$5$};
\node[mult] at (3.45,1.92) {$4$};
\node[mult] at (5.10,2.45) {$3$};
\node[mult] at (6.80,2.55) {$2$};
\node[mult] at (8.50,2.35) {$1$};
\node[mult] at (3.45,-1.92) {$4$};
\node[mult] at (5.10,-2.4) {$3$};
\node[mult] at (6.80,-2.5) {$2$};
\node[mult] at (8.50,-2.42) {$1$};
\node[text=green!50!black] at (-1.70,1.22) {$U_0$};
\node[text=blue!75!black] at (6.80,3.95) {$U_A$};
\node[text=magenta!70!blue] at (6.80,-3.95) {$U_B$};
\end{tikzpicture}
\caption{The cycle $Z$ and the open sets used to trivialize
$\mathcal O_{Z'}(D)$. The red numbers are the multiplicities in $Z$.}
\end{figure}
The exact sequence
\[
0\longrightarrow N^*_{C_0/X}(-5p)
\longrightarrow N^*_{C_0/X}
\longrightarrow N^*_{C_0/X}|_{5p}
\longrightarrow0
\]
gives, after choosing a local parameter $t$ at $p$ and a local trivialization
of the conormal bundle, an isomorphism
\begin{equation}
H^1\bigl(C_0,N^*_{C_0/X}(-5p)\bigr)
\simeq
\frac{\mathbb C[t]/(t^5)}{\langle1,t,t^2,t^3\rangle}.
\label{eq:I11-obstruction-space}
\end{equation}
This space is one-dimensional, generated by the class of $t^4$. 

Analytically at the node, the Weierstrass model has an
$A_{10}$-singularity $rs=t^{11}$. Numbering the branches suitably, the
pullback of $r$ has order $j$ along $A_j$, while the pullback of $s$ has order
$j$ along $B_j$, for $1\le j\le5$. Therefore
$D|_{U_A}=\operatorname{div}(r)$ and
$D|_{U_B}=-\operatorname{div}(s)$; the corresponding local generators of
$\mathcal O_X(D)$ are $r^{-1}$ and $s$.

It remains to determine the generator on $U_0\cap Z'$. Since $C_1$ has
multiplicity $5$ in $Z'$, we work modulo $t^5$, where the last term of
\eqref{eq:I11-Weierstrass} disappears. Put
$c(t)=\sqrt{1+t^4}$ and $w=2y/x$. The simultaneous normalization is then
given by
$x=(w^2-c(t)^2)/4$ and $y=w(w^2-c(t)^2)/8$. The two points over the node are
$w=c(t)$ and $w=-c(t)$. After exchanging the branches if necessary,
$w=c(t)$ corresponds to $q_A$ and $w=-c(t)$ to $q_B$. Hence
\[
 f=\frac{w+c(t)}{w-c(t)}.
\]
This function has a pole at $q_A$, a zero at $q_B$, and value $1$ at the zero section. Near
$q_A$, the functions $r$ and $w-c(t)$ differ by a unit, so $fr$ is
invertible; near $q_B$, the functions $s$ and $w+c(t)$ differ by a unit, so
$f/s$ is invertible. Thus $f$, $r^{-1}$, and $s$ glue to the unique normalized
trivialization of $\mathcal O_{Z'}(D)$.

Near $p$, the function $w$ has a simple pole along $C_0$. Setting
$\epsilon=1/w$ and working modulo $\epsilon^2$, we obtain
\[
 f=\frac{1+c(t)\epsilon}{1-c(t)\epsilon}
   \equiv1+2c(t)\epsilon.
\]
Relative to the generator $\epsilon t^5$ of $\mathcal I$, the resulting
principal part is $2c(t)/t^5$. Under
\eqref{eq:I11-obstruction-space}, the obstruction is therefore represented by
the jet of $2c(t)$ modulo $t^5$. Since
\[
 2c(t)=2\sqrt{1+t^4}\equiv2+t^4\pmod{t^5},
\]
the constant term lies in the image of
$H^0(\mathbb P^1,\mathcal O_{\mathbb P^1}(3))$, whereas $t^4$ does not. Hence
$\eta(D)=\pm[t^4]\ne0$. Thus $\mathcal O_Z(D)$ is nontrivial and, since
$\operatorname{Pic}^0(Z)\simeq\mathbb G_a$, it is non-torsion. In particular,
$\mathcal U$ is nonempty.

By Lemma~\ref{I11-obstruction-regularity-lem}, $\mathcal U$ is Zariski open.
Let $T$ be an irreducible component of the labeled stratum meeting
$\mathcal U$. Lemma~\ref{I11-pure-dimension-lem} gives $\dim T=18$. On the
other hand, \cite[Corollary~1.2]{Kloosterman07} gives
$\dim\operatorname{NL}_{13}=30-13=17$, where
$\operatorname{NL}_{13}$ denotes the locus of surfaces with Picard number at
least $13$. Its inverse image on the finite cover is a countable union of
closed subvarieties of dimension at most $17$, so it cannot contain the
nonempty open subset $\mathcal U\cap T$. Choose
\[
 X\in(\mathcal U\cap T)\setminus\operatorname{NL}_{13}.
\]
The trivial lattice of $X$ has rank $12$, while $\rho(X)<13$; hence
$\rho(X)=12$.

The Shioda--Tate formula now gives
$\operatorname{rank}\operatorname{MW}(\pi)=12-2-10=0$. Moreover,
\[
 \delta(\pi)=\frac{\lfloor 11^2/4\rfloor}{11}=\frac{30}{11}<3.
\]
Therefore Theorem~\ref{Jac-thm}(1) yields
\[
 \NE(X)=\sum_{i=0}^{11}\mathbb R_{\ge0}[C_i].
\]
Thus the Mori cone is simplicial. By Lemma~\ref{I11-cycle-lem}, the divisor
$L$ is integral, $L^2=33$, and it intersects the generators of the Mori cone
nonnegatively. Hence $L$ is big and nef.

Suppose that $L$ were semiample. Then $\mathcal O_X(mL)$ would be globally
generated for some $m>0$. Its restriction to the connected cycle $Z$ has
degree zero on every component. The associated morphism is constant on every
component of $Z_{\mathrm{red}}$, so its scheme-theoretic image is a connected
zero-dimensional scheme. Such a scheme is local Artinian, and every line
bundle on it is trivial. Therefore $\mathcal O_Z(mL)$ is trivial. This would
make $L|_Z$ torsion, contradicting
$\mathcal O_Z(2L)\simeq\mathcal O_Z(-D)$ and the non-torsion of
$\mathcal O_Z(D)$. Thus $L$ is not semiample. By
Proposition~\ref{char-MDS-prop}, $X$ is not a Mori dream surface.
\end{proof}

\section{Examples in every Picard number}
\label{Example-Sect}
\subsection{Picard number two}
We begin with a general observation.

\begin{theorem}
\label{Pic2-Jac-thm}
Let $\pi\colon X\to\mathbb P^1$ be a Jacobian elliptic surface with
$\chi(\mathcal O_X)\ge3$ and $\rho(X)=2$. Then $X$ is a Mori dream surface
of Kodaira dimension one.
\end{theorem}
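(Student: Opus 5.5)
With $\rho(X)=2$, I would show directly that $\NE(X)$ is spanned by the zero section $C_0$ and the fiber $F$, and then check that both extremal nef rays are semiample. Proposition~\ref{char-MDS-prop} then concludes, since $q(X)=0$ and $\kappa(X)=1$ by Proposition~\ref{kod1-basic-prop}.

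\textbf{Setup.} By Shioda--Tate (Proposition~\ref{jac-rho-prop}), $\rho(X)=2$ forces three things. The Mordell--Weil group is finite. Every fiber is irreducible. The classes $C_0$ and $F$ form a basis of $\NS(X)_{\mathbb Q}$. Set $\chi=\chi(\mathcal O_X)\ge 3$. The intersection matrix is
\[
C_0^2=-\chi,\qquad C_0\cdot F=1,\qquad F^2=0.
\]

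\textbf{Mori cone.} Write an irreducible curve $\Gamma$ as $\Gamma\equiv aC_0+bF$. There are three cases.
\begin{itemize}
\item If $\Gamma$ is vertical, then $\Gamma\equiv F$, because all fibers are irreducible.
\item If $\Gamma=C_0$, there is nothing to check.
\item Otherwise $a=\Gamma\cdot F>0$ and $\Gamma\cdot C_0=-a\chi+b\ge0$. Hence $b\ge a\chi\ge0$, and $\Gamma$ lies in $\mathbb R_{\ge0}C_0+\mathbb R_{\ge0}F$.
\end{itemize}
Therefore $\NE(X)=\mathbb R_{\ge0}[C_0]+\mathbb R_{\ge0}[F]$, which is rational polyhedral. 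Equivalently, this is Theorem~\ref{Jac-thm}(1) in the degenerate case with no reducible fibers, where $\delta(\pi)=0\le\chi$.

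\textbf{Nef cone.} Dually, $\Nef(X)$ is spanned by $F$ and by $H=C_0+\chi F$. These satisfy
\[
H\cdot C_0=0,\qquad H\cdot F=1,\qquad H^2=\chi>0.
\]
By Proposition~\ref{NE-reduced-prop}, it suffices to show that $F$ and $H$ are semiample.
\begin{itemize}
\item $F$ is semiample, since it is the pullback of a point under $\pi$.
\item $H$ is big and nef. Its null locus is exactly $C_0$: any curve orthogonal to $H$ is negative, hence spans an extremal ray, and the only curve in that ray is $C_0$.
\end{itemize}

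\textbf{Semiampleness of $H$.} Take $B=C_0$ as a block. This is valid because $C_0^2=-\chi<0$.

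The only point needing care is Artin's condition for multiples $aC_0$ with $a\ge1$. Using $K_X\sim(\chi-2)F$,
\[
(aC_0)^2+K_X\cdot aC_0=-\chi a^2+(\chi-2)a\le-2 .
\]
So every positive cycle supported on $C_0$ has $p_a\le0$, and $H^1(\mathcal O)=0$ for these cycles. More simply, $C_0\simeq\mathbb P^1$ is a smooth rational curve of negative self-intersection, so it contracts to a rational singularity.

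Proposition~\ref{block-semi-prop} then gives $\mathbf B(H)=\varnothing$, so $H$ is semiample. Since both extremal nef rays are semiample, $\Nef(X)=\mathrm{SAmp}(X)$, and $X$ is a Mori dream surface of Kodaira dimension one.

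I do not expect a real obstacle. The only substantive input is the irreducibility of all fibers, which comes from Shioda--Tate.
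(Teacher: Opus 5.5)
Your proposal is correct and follows the paper's proof essentially step by step. Shioda--Tate gives irreducible fibers and the basis $C_0,F$; the inequality $b\ge a\chi$ yields $\NE(X)=\mathbb R_{\ge0}[C_0]+\mathbb R_{\ge0}[F]$; and the dual generators $F$ and $H=C_0+\chi F$ are semiample, the latter via the block $C_0$ and Proposition~\ref{block-semi-prop}. One cosmetic caveat: Theorem~\ref{Jac-thm}(1) assumes at least one reducible fiber, so calling this its degenerate case is only heuristic, but your direct argument does not depend on that remark.
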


\begin{proof}
Set $\chi=\chi(\mathcal O_X)$, let $F$ be a general fiber, and let $C_0$ be
the zero section. By Proposition~\ref{kod1-basic-prop}, one has
$q(X)=0$, $K_X\sim(\chi-2)F$, and $C_0^2=-\chi$. Shioda--Tate shows that
every fiber is irreducible and that $C_0,F$ form a basis of
$N^1(X)_{\mathbb Q}$.

If $\Gamma\ne C_0$ is an irreducible curve and
$\Gamma\equiv aC_0+bF$, then $a=\Gamma\cdot F\ge0$. If $a>0$, then
\[
 0\le\Gamma\cdot C_0=-a\chi+b,
\]
so $b\ge a\chi\ge0$; if $a=0$, then $\Gamma$ is vertical and its class is a
nonnegative multiple of $F$. Hence
\[
 \NE(X)=\mathbb R_{\ge0}[C_0]+\mathbb R_{\ge0}[F].
\]
The dual nef cone is generated by $F$ and $H=C_0+\chi F$. The first class is
semiample. The second is nef and big, with null locus $C_0$. Since $C_0$ is a
block for $H$ and $H^1(C_0,\mathcal O_{C_0})=0$,
Proposition~\ref{block-semi-prop} shows that $H$ is semiample. The result
follows from Propositions~\ref{NE-reduced-prop} and~\ref{char-MDS-prop}.
\end{proof}

\begin{theorem}
\label{Example-Pic2-thm}
For every integer $\chi\ge3$, there exists a Jacobian elliptic Mori dream
surface $\pi\colon X\to\mathbb P^1$ with $\kappa(X)=1$, $\rho(X)=2$, and
$\chi(\mathcal O_X)=\chi$ such that
\[
\NE(X)=\mathbb R_{\ge0}[F]+\mathbb R_{\ge0}[C_0],
\]
where $F$ is a general fiber and $C_0$ is the zero section of $\pi$.
\end{theorem}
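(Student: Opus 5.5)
By Theorem~\ref{Pic2-Jac-thm}, it suffices to produce, for each $\chi\ge3$, a Jacobian elliptic surface $\pi\colon X\to\mathbb P^1$ with $\chi(\mathcal O_X)=\chi$ and $\rho(X)=2$. The description of $\NE(X)$ is then exactly the one obtained in that proof, and the Mori dream property and $\kappa(X)=1$ follow as well. So the task reduces to an existence statement about Picard numbers, and I would handle it by a Noether--Lefschetz argument in Weierstrass families.

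First, consider Weierstrass equations $y^2=x^3+Ax+B$ with $A\in H^0(\mathbb P^1,\mathcal O(4\chi))$ and $B\in H^0(\mathbb P^1,\mathcal O(6\chi))$. For general $(A,B)$, the discriminant $\Delta=4A^3+27B^2$ has $12\chi$ simple zeros that are not zeros of $A$. For instance, take $A=0$ and $B$ with simple roots: this gives fibers of type $\mathrm{II}$, which are irreducible. It is better, though, to note that the conditions on simple roots of $\Delta$ and coprimality with $A$ are open, and to exhibit one member, e.g.\ $A=-3$ and $B$ general. On this open set, every singular fiber is of type $I_1$, the model is smooth, the fibration is relatively minimal with fundamental line bundle $\mathcal O(\chi)$, and hence $\chi(\mathcal O_X)=\chi$. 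Because every fiber is irreducible, the trivial lattice has rank $2$. By Shioda--Tate, $\rho(X)=2+\operatorname{rank}\mathrm{MW}(\pi)$.

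Second, I would show that for a very general member the Mordell--Weil rank is zero. I would cite the Noether--Lefschetz result for elliptic surfaces already used in Section~\ref{I11-section}: by \cite[Corollary~1.2]{Kloosterman07}, the locus of surfaces with $\rho\ge3$ is a countable union of proper closed subsets of the moduli of Jacobian elliptic surfaces with given $\chi$. Since the moduli space is irreducible of dimension $10\chi-2$, the complement of this countable union is nonempty; over $\mathbb C$, a countable union of proper closed subvarieties cannot cover an irreducible variety. This yields $X$ with $\rho(X)=2$.

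The main obstacle is making the Noether--Lefschetz step precise for every $\chi\ge3$. Specifically, one must check that the loci with $\rho\ge3$ are indeed proper. If that citation needs support, I would argue directly: $p_g=\chi-1>0$, and the period map for Weierstrass families has a nontrivial infinitesimal variation, so by the standard Green-type argument each Hodge locus $\{\lambda\in H^{1,1}\}$ for a class $\lambda$ outside the trivial lattice is a proper analytic subset. Alternatively, I would exhibit a single member with $\rho=2$, for example by reduction modulo a prime together with point counts on an explicit Weierstrass model. I expect the citation to suffice.
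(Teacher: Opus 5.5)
Your proposal is correct and follows the paper's route: reduce to Theorem~\ref{Pic2-Jac-thm} and invoke a Noether--Lefschetz statement that a very general Jacobian elliptic surface with given $\chi\ge3$ has $\rho=2$. The only difference is the citation. The paper quotes Cox's theorem \cite{Cox90}, as recalled in \cite[Section~2]{Ulmer17}, giving $\NS(X)=\mathbb Z[O]\oplus\mathbb Z[F]$ directly for a very general member. You instead combine an explicit Weierstrass family with only $I_1$ fibers and Shioda--Tate with the dimension bound $\dim\operatorname{NL}_3=10\chi-3<10\chi-2$ from \cite[Corollary~1.2]{Kloosterman07}, which is equally valid.
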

\begin{proof}
Fix $\chi\ge3$ and put $p_g=\chi-1$. Cox \cite{Cox90} considered the
parameter space $\mathcal U$ of relatively minimal elliptic surfaces
$\pi\colon X\to\mathbb P^1$ with a section, geometric genus $p_g$, and
$q(X)=0$. As recalled in \cite[Section 2]{Ulmer17}, a very general member with
$\chi(\mathcal O_X)=p_g+1=\chi$ has no section other than the zero section
$O$, and
\[
\NS(X)=\mathbb Z[O]\oplus\mathbb Z[F],\qquad O^2=-\chi,\qquad O\cdot F=1,
\qquad F^2=0,\qquad K_X\sim(\chi-2)F.
\]
Note that $\kappa(X)=\kappa(X,F)=1$. Thus $X$ is a Mori dream surface by
Theorem~\ref{Pic2-Jac-thm}.
\end{proof}
\subsection{Higher Picard number}
\begin{theorem}
\label{Example-I_n-thm}
For every integer $\rho\ge3$, there is a Jacobian elliptic Mori dream surface
$\pi\colon X\to\mathbb P^1$ with $\kappa(X)=1$ and $\rho(X)=\rho$ such that
$\pi$ has exactly one reducible fiber of type $I_n$, where
$n=\rho-1\ge2$, and the Mordell--Weil group of $\pi$ is trivial.
\end{theorem}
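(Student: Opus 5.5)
The construction follows the one announced in the introduction. Fix $\rho\ge3$ and put $n=\rho-1\ge2$. Start from a very general Jacobian elliptic K3 surface $\pi_Y\colon Y\to B\simeq\mathbb P^1$. Its singular fibers are $24I_1$, $\NS(Y)=\mathbb Z[O]\oplus\mathbb Z[F]$, and $\mathrm{MW}(\pi_Y)=0$. Choose a finite morphism $\beta\colon B'\simeq\mathbb P^1\to B$ of degree $n$ with the following properties:
\begin{enumerate}
\item $\beta$ is totally ramified over the point $b_0$ under one nodal fiber;
\item its other branch points avoid the $23$ remaining singular values;
\item its other branch points are generic.
\end{enumerate}
For instance, take $\beta(u)=u^n$ in a coordinate with $b_0=0$, with the second totally ramified point $\infty$ placed over a smooth fiber. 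Let $X$ be the relatively minimal smooth model of $Y\times_B B'$. Since the fiber over $b_0$ is $I_1$ and multiplicative, base change with ramification index $n$ produces an $I_n$ fiber. All other fibers over unramified points remain $I_1$, and smooth fibers over branch points stay smooth. Hence $X$ has configuration $I_n+23nI_1$, so $c_2(X)=24n$ and $\chi(\mathcal O_X)=2n$. The fundamental line bundle pulls back to $\mathcal O(2n)$, so $\chi=2n\ge4\ge3$, and $X$ has Kodaira dimension one by Proposition~\ref{kod1-basic-prop}.

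The main obstacle is to show $\rho(X)=2+(n-1)=\rho$, equivalently $\operatorname{rank}\mathrm{MW}(\pi_X)=0$, and then that $\mathrm{MW}(\pi_X)$ is trivial. I would first try to control the Picard number via the Galois structure. With $\beta(u)=u^n$ cyclic, $\mu_n$ acts on $X$, and $\mathrm{MW}(X)\otimes\mathbb Q$ decomposes into isotypic pieces. The invariant part is $\mathrm{MW}(Y)\otimes\mathbb Q=0$. Each nontrivial character piece corresponds to sections of a quadratic or higher twist of $Y$ by a cyclic cover branched only over $b_0$ and one smooth fiber. I would then invoke very generality of $Y$, in the spirit of the Noether--Lefschetz/Cox argument of \cite{Cox90,Ulmer17}. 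Since $b_0$ can be moved, a specialization (or a deformation-count argument) shows these twisted pieces have rank zero for very general $Y$. If this is awkward, the fallback is to take $\beta$ with generic extra branch points and argue that the transcendental lattice of $X$ has the expected size via a monodromy argument on $H^2$.

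For torsion, a torsion section of $X$ would be a torsion section of $Y$ over the generic point of $B'$. The Galois closure is generated by $n$-torsion-type field extensions, and the mod-$\ell$ Galois representation of a very general $Y$ has large image, namely $\mathrm{SL}_2$. Such an image is not trivialized by a cyclic base change, so $E(\mathbb C(B'))_{\mathrm{tors}}=0$. Alternatively, with only one reducible fiber $I_n$, the torsion injects into the component group $\mathbb Z/n$. A nontrivial torsion section would then meet a non-identity component, and the height formula $0=2\chi-\delta(T)$ would force $\delta(T)=2\chi=4n$, which is impossible since $\delta(T)\le n/4$. This second argument is cleaner and I would use it.

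Finally, $N(\pi)=n\le2\chi+3=4n+3$. Theorem~\ref{Jac-thm}(2), or Corollary~\ref{small-rho-cor}, then shows that $X$ is a Mori dream surface with $\rho(X)=n+1=\rho$.
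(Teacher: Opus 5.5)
Your proposal has a genuine gap at the step you flag as the main obstacle: you never prove $\operatorname{rank}\mathrm{MW}(\pi_X)=0$. Without it you have neither $\rho(X)=n+1$ nor the finiteness hypothesis of Theorem~\ref{Jac-thm}.

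\begin{itemize}
\item The isotypic decomposition correctly disposes of the invariant part.
\item For the nontrivial characters you only appeal to ``a specialization (or a deformation-count argument)'' or ``a monodromy argument on $H^2$''. Neither is carried out.
\item The very-generality results of Cox and Ulmer concern a very general member of the full moduli space with given $\chi$. Base changes of K3 surfaces form a special sublocus, so those results cannot be quoted.
\item Your specific map $\beta(u)=u^n$, with $\infty$ only required to lie over a smooth fiber, is not a generic choice. No genericity statement covers it as stated.
\end{itemize}
Your height argument for torsion is correct: a torsion section is disjoint from $O$, so $0=2\chi-\delta(T)$ would need $\delta(T)=4n>n/4$. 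But it handles only torsion.

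The missing idea is a geometric description of sections of the pullback. Let $s$ be a section of $\pi_X$. Its image in $Y$ is a rational horizontal curve $C$. Passing to the normalization $\widetilde C\simeq\mathbb P^1$ gives a factorization $\beta=q_C\circ a$, where $q_C\colon\widetilde C\to B$ has degree $m=C\cdot F$.
\begin{itemize}
\item If $m=1$, then $C=O$ because $\mathrm{MW}(\pi_Y)=0$, so $s$ is the zero section.
\item The case $m\ge2$ is excluded by choosing $\beta$ very general. The K3 surface $Y$ carries only countably many rational curves, since it is not uniruled. For a fixed $C$, the maps with $\beta^{-1}(b_0)=np_0$ that factor through $q_C$ form a family of dimension at most $n/m+1\le n/2+1$. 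All such maps form an irreducible $(n+1)$-dimensional space, namely polynomials of degree $n$ in suitable coordinates.
\end{itemize}
This yields $\mathrm{MW}(\pi_X)=0$, rank and torsion at once. Then $\rho(X)=n+1$ follows from Proposition~\ref{jac-rho-prop}.

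The same idea would also rescue your cyclic construction. If $u^n=q_C\circ a$, then $q_C$ is totally ramified over both $b_0$ and $b_\infty$. By Riemann--Hurwitz, a map of degree $m\ge2$ has at most two totally ramified points. Hence each of the countably many rational curves $C$ excludes at most one value of $b_\infty$, and choosing $b_\infty$ outside this countable set suffices.

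The rest of your argument is fine: the configuration $I_n+23nI_1$, $\chi=2n$, and $n\le2\chi+3$.
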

\begin{proof}
Set $n=\rho-1\ge2$. Let \(S\) be a very general elliptic K3 surface with
\(\operatorname{NS}(S)\simeq U\). Its Jacobian elliptic fibration
\(h\colon S\to\mathbb P^1\) has only the zero section and, for a very general
choice, exactly twenty-four fibers of type \(I_1\). Fix one nodal value
\(b_0\in\mathbb P^1\) and a point \(p_0\in\mathbb P^1\).

Let \(\mathcal H_n\) be the space of degree-$n$ maps
\(f\colon\mathbb P^1\to\mathbb P^1\) satisfying
\(f^{-1}(b_0)=np_0\). After choosing coordinates with
\(p_0=b_0=\infty\), this is the irreducible space of polynomials
\[
 f(z)=a_nz^n+\cdots+a_0,\qquad a_n\ne0,
\]
so \(\dim\mathcal H_n=n+1\).

For every rational horizontal curve \(C\subset S\), let
\[
 q_C\colon\widetilde C\simeq\mathbb P^1\longrightarrow\mathbb P^1
\]
be the map induced by \(h\), and put \(m=\deg q_C\). Suppose that
\(f=q_C\circ a\) for a map \(a\) of degree \(d\). Then \(n=md\). If
\(m\ge2\), total ramification of \(f\) over \(b_0\) forces
\[
 q_C^{-1}(b_0)=mc,\qquad a^{-1}(c)=dp_0
\]
for some \(c\in\widetilde C\). Thus, after fixing coordinates, the possible
maps \(a\) form a space of polynomials of degree \(d\), of dimension
\(d+1\). The closure of the locus of maps in \(\mathcal H_n\) factoring
through this fixed \(q_C\) therefore has dimension at most
\[
 d+1=\frac nm+1\le\frac n2+1<n+1.
\]
The K3 surface \(S\) contains at most countably many rational curves: its Chow
variety has countably many components, and a positive-dimensional family of
rational curves would sweep out $S$, contradicting the fact that a complex K3
surface is not uniruled. For each fixed $C$, the dimension estimate above
places the factorization locus in a proper closed subset of $\mathcal H_n$.
Since an irreducible variety over the uncountable field $\mathbb C$ cannot be
covered by countably many proper closed subsets, their union does not cover
\(\mathcal H_n\). Choose \(f\) outside this union and also in the open set
where all other ramification is general and lies away from the remaining
twenty-three nodal values.

Let \(\pi\colon X\to\mathbb P^1\) be the Jacobian elliptic fibration on the
minimal desingularization of the base change. If \(s\) is a section of
\(\pi\), its image in \(S\) is a rational horizontal curve \(C\). Passing to
the normalization gives a factorization \(f=q_C\circ a\). By construction
this is impossible when \(m\ge2\). If \(m=1\), then \(C\) is a section of
\(h\), hence the zero section, and \(s\) is the induced zero section.
Therefore
\[
 \operatorname{MW}(\pi)=0.
\]

The totally ramified pullback of the chosen \(I_1\)-fiber is \(I_n\), while
the other nodal fibers give \(23n\) fibers of type \(I_1\). Consequently
\[
 c_2(X)=24n,\qquad \chi(\mathcal O_X)=2n,
 \qquad K_X\sim(2n-2)F.
\]
Thus \(\kappa(X)=1\) for \(n\ge2\), and
\(\rho(X)=n+1\) by Proposition~\ref{jac-rho-prop}. Since
$\chi(\mathcal O_X)=2n$, we have \(n\le2\chi(\mathcal O_X)+3\). Therefore
$X$ is a Mori dream surface by Theorem~\ref{Jac-thm}.
\end{proof}
\begin{proof}[Proof of Theorem~\ref{Mainthm}]
The proof follows from Theorems~\ref{Example-Pic2-thm}
and~\ref{Example-I_n-thm}.
\end{proof}

\end{document}